\newcommand\citeayn[2][]{\citeauthor*{#2} (\citeyear{#2}) \cite[#1]{#2}}
\newcommand\citeny[2][]{\textnormal{\cite{#2}~(\citeyear{#2})}}
\newcommand*{\email}[1]{\href{mailto:#1}{\nolinkurl{#1}}} 
\Crefname{equation}{Condition}{Conditions}
\Crefname{problem}{Problem}{Problems}
\Crefname{assumption}{Assumption}{Assumptions}
\DeclareMathOperator*{\argmax}{arg\,max}
\DeclareMathOperator*{\argmin}{arg\,min}
\definecolor{red}{rgb}{0.7,0.15,0.15}
\definecolor{green}{rgb}{0,0.5,0}
\definecolor{blue}{rgb}{0,0,0.7}
\makeatletter \@addtoreset{equation}{section}
\newtheorem{theorem}{Theorem}[section]
\newtheorem{assumption}{Assumption}
\newtheorem{lemma}[theorem]{Lemma}
\newtheorem{proposition}[theorem]{Proposition}
\newtheorem{definition}[theorem]{Definition}
\newtheorem{problem}{Problem}
\newtheorem{remark}[theorem]{Remark}
\renewenvironment{proof}[1][\relax]{\par
  \pushQED{\qed}%
  \normalfont \topsep6\p@\@plus6\p@\relax
  \trivlist
  \item[\hskip\labelsep\itshape
    \ifx#1\relax \proofname\else\proofname{} of #1\fi\@addpunct{.}]\ignorespaces
}{%
  \popQED\endtrivlist\@endpefalse
}
\def \D{\mathbb{D}}
\def \E{\mathbb{E}}
\def \F{\mathbb{F}}
\def \H{\mathbb{H}}
\def \M{\mathbb{M}}
\def \N{\mathbb{N}}
\def \P{\mathbb{P}}
\def \R{\mathbb{R}}
\def \S{\mathbb{S}}
\def \U{\mathbb{U}}
\def\Cc{{\cal C}}
\def\Ec{{\cal E}}
\def\Fc{{\cal F}}
\def\Hc{{\cal H}}
\def\Kc{{\cal K}}
\def\Mc{{\cal M}}
\def\Oc{{\cal O}}
\def\Pc{{\cal P}}
\def\Rc{{\cal R}}
\def\Sc{{\cal S}}
\def\Uc{{\cal U}}
\def\Vc{{\cal V}}
\def\erm{\mathrm{e}}
\def\drm{\mathrm{d}}
\definecolor{bleudefrance}{rgb}{0.19, 0.55, 0.91}
\definecolor{darkspringgreen}{RGB}{60, 179, 113}
\newcommand{\AC}[1]{{\color{darkspringgreen} #1}}
\newcommand{\modifreview}[1]{#1}
\newcommand{\modif}[1]{#1}
\title{A new approach to principal--agent problems with volatility control}
\author{Alessandro {\sc Chiusolo}\thanks{Department of Operations Research and Financial Engineering (ORFE), Princeton University, NJ 08544. Research partially supported by the NSF grant DMS-2307736.} \and Emma {\sc Hubert}\footnotemark[1]}
\date{\today}
\begin{document}

\maketitle

\begin{abstract}
	The recent work by \citeayn{cvitanic2018dynamic} presents a general approach for continuous-time principal--agent problems, through dynamic programming and \emph{second-order} backward stochastic differential equations (BSDEs). In this paper, we provide an alternative formulation of the principal--agent problem, which can be solved simply by relying on the theory of BSDEs. This reformulation is strongly inspired by an important remark in \cite{cvitanic2018dynamic}, namely that if the principal observes the output process $X$ in continuous-time, she can compute its quadratic variation pathwise. While in \cite{cvitanic2018dynamic}, this information is used in the contract, our reformulation consists in assuming that the principal could directly control this process, in a `first-best' fashion. The resolution approach for this alternative problem actually follows the line of the so-called `Sannikov's trick' in the literature on continuous-time principal--agent problems, as originally introduced by \citeayn{sannikov2008continuous}. We then show that the solution to this `first-best' formulation is identical to the solution of the original problem. More precisely, using the contract form introduced in \cite{cvitanic2018dynamic} as \emph{penalisation contracts}, we highlight that this `first-best' scenario can be achieved even if the principal cannot directly control the quadratic variation. Nevertheless, we do not have to rely on the theory of 2BSDEs to prove that such contracts are optimal, as their optimality is ensured by showing that the `first-best' scenario is achieved. We believe that this more straightforward approach to solve continuous-time principal--agent problems with volatility control will facilitate the dissemination of these problems across many fields, and its extension to even more intricate problems.

	\bigskip

	\noindent
	\textit{Keywords}. Principal--agent problems, volatility control, moral hazard, first-best, BSDEs.
	
	\medskip
	
	\noindent
	{\it AMS 2020 subject classifications.} Primary: 91B43; secondary: 91B41, 93E20.
\end{abstract}

\section{Introduction}\label{sec:introduction}

Contract theory, and more precisely principal--agent problems, offer a relevant framework for the study of optimal incentives between agents, especially with information asymmetry. In standard principal--agent problems, a principal (she) is imperfectly informed about the actions of an agent (he). To incentivise the agent to act in her best interest, she can offer him a contract, usually represented by a terminal payment $\xi$ at a fixed finite time horizon $T > 0$. The main mathematical challenge of these problems is to determine an \emph{optimal} contract, in the sense that it maximises the utility of the principal, while that of the agent is held at least to a given level. The design of such a contract raises first of all an issue of information: the contract can only depend on what is known, or observable, by the principal. We focus in this paper on the moral hazard case, \textit{i.e.} when the \emph{effort} of the agent is hidden and therefore not contractible. 

\medskip

For a long time, these problems were only considered in discrete-time or static settings, in particular with the works of \citeauthor{mirrlees1976optimal} in \citeny{mirrlees1976optimal} and \citeny{mirrlees1999theory}, or the seminal papers of \citeayn{holmstrom1979moral} and \citeayn{grossman1983analysis}. However, discrete-time models are actually challenging to solve, especially because the existence of an optimal contract in these settings is more the exception than the rule, even in very simple and natural examples. The theory then regained momentum with the first continuous-time formulation of the problem by \citeayn{holmstrom1987aggregation}, and later by \citeayn{sannikov2008continuous}, using tools from stochastic control theory. Through these works, it is established that, if the agent only controls the drift of a stochastic process $X$, continuously observable by the principal, then the optimal contract $\xi$ should have the following form,
\begin{align}\label{eq:contract_drift}
	\xi = y_0 - \int_0^T \Hc (t, X_t, Z_t) \drm t + \int_0^T Z_t \cdot \drm X_t, 
\end{align}
where $\Hc$ here is the (simplified) agent's Hamiltonian for drift control. In the previous contract form, the constant $y_0 \in \R$ and the process $Z$ have to be optimally chosen by the principal, in order to maximise her utility and ensure the participation of the agent in the contractual agreement. More precisely, the choice of $y_0 \in \R$ is usually dictated by the agent's participation constraint, while the process $Z$ will incentivise the agent to perform appropriate effort on the drift of the output process. This form of contract naturally comes from the study of the agent's continuation utility, which, in the case of drift control only, can be related to the solution of a backward stochastic differential equation (BSDE for short). This formal link between the agent's continuation utility and the contract itself was introduced by \citeayn{sannikov2008continuous}, and thus usually referred to as `Sannikov's trick' in the literature on continuous-time principal--agent problems, but has recently been revisited in a more theoretical way by \citeayn{possamai2024there}. These problems have subsequently been extended in many directions, for example to consider multi-agent 
and/or multi-principal problems, 
a continuum of agents with mean-field interactions, 
more general output process $X$, more intricate information asymmetry between the two parties, to only cite a few. 

\medskip

In terms of theoretical developments, the most significant breakthrough is thanks to \citeayn{cvitanic2018dynamic}, who have developed a general and rigorous theory that allows to address a wide spectrum of principal--agent problems, especially when the agent can also control the volatility of the output process. As explained in \cite{cvitanic2018dynamic}, moral hazard problems in continuous-time in which the agent controls the volatility are notoriously harder to study. Indeed, in this case, his continuation utility cannot be related to the solution of a BSDE anymore, one has to rely now on the theory of backward stochastic differential equations of the \emph{second order} (2BSDE for short), introduced by \citeayn{soner2012wellposedness} and further developed by \citeayn{possamai2018stochastic}. One important observation made in \cite{cvitanic2018dynamic} which will also be at the heart of our approach is that, if the principal has access to the trajectory of the output process $X$, then she can deduce its quadratic variation process $\langle X \rangle$ in a pathwise manner, as demonstrated by \citeayn{bichteler1981stochastic}. This motivates the authors to introduce a new form of contracts, namely
\begin{align}\label{eq:contract_vol}
	\xi = y_0 - \int_0^T \Hc (t, X_t, Z_t, \Gamma_t) \drm t + \int_0^T Z_t \cdot \drm X_t + \dfrac12 \int_0^T {\rm Tr} \big[ \Gamma_t \drm \langle X \rangle_t \big].
\end{align}
These contracts are indexed on $X$ as before, but also on its quadratic variation $\langle X \rangle$ through an additional parameter $\Gamma$, which would incentivise the agent to perform appropriate efforts on the volatility. Note that the agent's Hamiltonian $\Hc$ here should be different from the one introduced in the previous contract form, precisely to take into account volatility control. When restricting the study to this form of contracts, it is straightforward to show, mostly using Ito's formula, that the agent's optimal effort will naturally be given by the maximiser of the Hamiltonian. One can then proceed by solving the principal's optimal control problem, namely by writing the Hamilton-Jacobi-Bellman (HJB for short) equation corresponding to her stochastic control problem, which becomes relatively standard under the restriction to the previous form of contracts. These first two steps are therefore quite straightforward using standard tools from stochastic control theory. However, the third and last step requires to prove that the restriction to such form of contracts is without loss of generality. While the approach provided in \cite{cvitanic2018dynamic} to prove this main result relies on the theory of 2BSDEs, the aim of this paper is precisely to present an alternative approach which avoids relying on this sophisticated theory. For the sake of comparison, we will study the same general model, whose original formulation will be recalled in \Cref{sec:model}.

\medskip

Despite the challenges raised when studying principal--agent problems with volatility control, especially as the optimality of revealing contracts relies on deep results from the recent theory of 2BSDEs, it has subsequently been extended and applied in many different situations. We can mention in a non-exhaustive way the applications related to finance by the same authors in \cite{cvitanic2017moral}, by \citeayn{cvitanic2018asset} or more recently by \citeayn{baldacci2022governmental}; the works by \citeayn{aid2022optimal}, \citeayn{elie2021meanfield} and \citeayn{aid2023principalagent} for applications related to the energy sector; as well as other various extensions, \textit{e.g.}, the works of \citeayn{mastrolia2018moral}, \citeayn{hernandezsantibanez2019contract}, \citeayn{hu2019continuoustime},  \citeayn{hubert2023continuoustime} or \citeayn{keppo2024dynamic}. Naturally, the study of applications fitting into the general framework proposed in \cite{cvitanic2018dynamic} can be done in a fairly straightforward way, directly using the results, as done in \cite{cvitanic2017moral,aid2022optimal}. Nevertheless, the recent research mentioned above has also attempted to generalise principal--agent problems, especially to consider many agents (see \cite{hubert2023continuoustime,aid2023principalagent}), or even a continuum (see \cite{elie2021meanfield}). 
If one follows the approach developed in \cite{cvitanic2018dynamic}, these extensions require to introduce the so-called \emph{multidimensional} or \emph{mean-field} 2BSDEs, and study, or assume, the well-posedness of such systems. However, while the theory on (one-dimensional) 2BSDEs is sufficiently developed to address relatively general principal--agent problems, there is currently no definition nor existence results on multidimensional/mean-field 2BSDEs. This further motivates the presentation in this paper of an alternative approach. We believe that this novel approach, which avoids relying on the theory of 2BSDEs, will greatly facilitate the study of relevant generalised principal--agent problems, especially with many agents, or non-continuous output processes.

\medskip

Before introducing the approach developed in this paper, we wish to provide more intuition on the problem under consideration via a short detour through the so-called \emph{first-best} case. When studying principal--agent problems with moral hazard, it is quite common to compare the results with this benchmark scenario, in which the principal can directly optimise the agent's effort. This problem is usually easier to solve, as one only has to consider the principal's stochastic control problem under the agent's participation constraint. It may happen in the moral-hazard case, sometimes also referred to as \emph{second-best}, that the principal can actually deduce, through her observation, the effective efforts of the agent. In this scenario, the two problems, first-best and second-best, are then equivalent if the principal can design a contract which strongly penalises the agent whenever he deviates from the optimal effort recommended by the principal. Typically, the principal will offer to the agent a minimal\footnote{Minimal in the sense that it would just be sufficient to ensure the participation of agent.} compensation if he follows the recommendation, and threaten him with a high penalty if he deviates from it. These \emph{forcing contracts} can usually be implemented if no strong assumptions are made on the admissible contract's form.

\medskip

As highlighted before, the approach developed by \citeayn{cvitanic2018dynamic} relies in particular on the observation that, if the principal have access to the trajectory of the output process $X$, she can actually compute its quadratic variation $\langle X \rangle$ pathwise, and thus design contracts of the form \eqref{eq:contract_CPT}, indexed on $X$ and $\langle X \rangle$ through sensitivity parameters $Z$ and $\Gamma$ to be optimised. The main idea behind our reformulation of the original principal--agent problem with volatility control is that, since the principal can deduce the quadratic variation $\langle X \rangle$, she could \emph{probably} force the agent to choose his efforts in order to achieve an \emph{optimal} quadratic variation. In other words, the idea here is to study an alternative `first-best' formulation of the original problem, in which the principal enforces a contract $\xi$ as well as a quadratic variation process. The agent should then optimise his effort in order to maximise his expected utility given such contract $\xi$ as before, but now the admissible set of efforts is restricted to those achieving the quadratic variation imposed by the principal.

\medskip

When considering the agent's problem in this `first-best' alternative formulation, described with more details in \Cref{ss:fb_formulation}, one can remark that, since the quadratic variation is \textit{a priori} fixed by the principal, the agent's weak formulation boils down to choosing equivalent probability measures by means of Girsanov theorem. For a fixed quadratic variation process and any contract $\xi$, the agent's continuation utility can therefore be represented by means of a BSDE, instead of a 2BSDE in the original formulation. More informally, one can actually apply the so-called `Sannikov's trick' to obtain a relevant representation of the contract $\xi$, which is then similar to the optimal form of contracts in principal--agent problems with drift control only, namely \eqref{eq:contract_drift}. Following this straightforward approach, we establish in \Cref{ss:approach} that the principal's problem boils down to a more standard stochastic control problem, in the sense that she should maximise her expected utility by optimally choosing the contract's parameters, namely a constant $y_0 \in \R$ and an appropriate process $Z$, as well as the quadratic variation process. 

\medskip

However, as mentioned before, the moral-hazard case is equivalent to its first-best counterpart if the principal can deduce the agent's control from her observations \emph{and} design a forcing contract. Indeed, even if the principal observes the agent's control, she may not have enough \emph{bargaining power} to actually force the agent to perform the optimal efforts. Similarly, in our problem, even if the principal observes the quadratic variation of the output process, she may not be able to actually force the agent to follow the recommended quadratic variation. Therefore, to ensure the equivalence between the original problem and its `first-best' reformulation, we need to introduce appropriate forcing contracts, which allow the principal to achieve her `first-best' utility even if she cannot directly control the quadratic variation. It is already quite clear that the contracts introduced in \cite{cvitanic2018dynamic} will perfectly serve this purpose. Indeed, as these contracts are optimal when the principal observes the quadratic variation, they should allow her to achieve the expected utility she would have if she could directly control the quadratic variation process. 
We thus use in \Cref{ss:penalisation_contract} the contract form \eqref{eq:contract_vol} to conclude that the principal's value in the reformulated problem coincides with the one derived in \cite{cvitanic2018dynamic}. In other words, from the principal's point of view, optimising over $\Gamma$ in the contract form \eqref{eq:contract_vol} or directly optimising over admissible quadratic variation lead to the exact same results. In particular, this result confirms that the contracts introduced in \cite{cvitanic2018dynamic} are also optimal when the agent's efforts on the volatility can actually be determined by the principal through her observation of the quadratic variation. \modif{Interestingly, our approach further reveals the importance of an additional assumption under which the contract form \eqref{eq:contract_vol} is indeed optimal and the above equivalence holds. Specifically, if \Cref{ass:duality} is not satisfied, the method of~\cite{cvitanic2018dynamic} may fail to yield optimal contracts, as explained in \Cref{rk:duality} and illustrated through a counter-example in \Cref{ss:counter-example}.}

\medskip

To summarise, this paper provides an alternative yet more accessible approach to solve principal--agent problems with volatility control, in particular to prove that the contract form introduced in \cite{cvitanic2018dynamic} is without loss of generality from the principal's point of view. More precisely, we first solve a `first-best' reformulation of the original problem by means of the theory of BSDEs, and then show that the contract form introduced in \cite{cvitanic2018dynamic} allows to achieve this first-best scenario, and is therefore naturally optimal. We believe that this \emph{simpler} approach will enable better dissemination of these problems in various fields across mathematical finance, economics, management sciences, operation research; fostering their use in many relevant applications. Furthermore, as this new approach only relies on the theory of BSDEs, we believe that it will facilitate the study of relevant extensions of principal--agent problems, for example when considering such a Stackelberg game with many agents, as well as its mean-field limit, when the principal can contract with a continuum of agents. \modifreview{We nevertheless emphasise that our approach relies on \Cref{ass:weak_uniqueness}, which is not assumed in \cite{cvitanic2018dynamic}. More precisely, to apply the BSDE theory without introducing an orthogonal martingale component, we require a martingale representation theorem to hold; \Cref{ass:weak_uniqueness} provides a sufficient condition for this. We refer the reader to \Cref{rk:MRP1,rk:MRP2} for a more detailed discussion. While this assumption appears necessary for our current arguments, we believe it constitutes a relatively mild condition, especially given the significant simplification afforded by our approach. Extending the method beyond the scope of \Cref{ass:weak_uniqueness} remains an open question, but we believe there is good reason to expect that such generalisations are possible and would preserve the validity of our main results.}


\medskip

The structure of the paper is as follows. We first introduce in \Cref{sec:model} the continuous-time principal--agent problem considered by \citeayn{cvitanic2018dynamic}. We also briefly describe the main steps of the approach they developed to solve the problem, highlighting the main challenges posed by this approach. \Cref{sec:new_approach} details the main results of this paper: we first introduce in \Cref{ss:fb_formulation} the alternative `first-best' reformulation of the original principal--agent problem, we then characterise in \Cref{ss:approach} the optimal form of contracts, relying only on the theory of BSDEs, and we finally prove in \Cref{ss:penalisation_contract} that the solution of the reformulated problem coincides with the solution of the original problem. To further highlight the link between the two formulations, we provide in \Cref{sec:examples} some illustrative examples in which the solution can be explicitly characterised. Finally, \Cref{sec:conclusion} concludes by providing further related research directions.

\paragraph*{Notations.} Let $\mathbb{N}^\star:=\mathbb{N}\setminus\{0\}$ be the set of positive integers, and recall that $T > 0$ denotes some maturity fixed in the contract. For any $(\ell,c) \in \N^\star \times \N^\star$, $\M^{\ell,c}$ will denote the space of $\ell\times c$ matrices with real entries. When $\ell=c$, we let $\M^{\ell}:=\M^{\ell,\ell}$, and denote by $\S_+^{\ell}$ the set of $\ell \times \ell$ non-negative definite (or positive semi-definite) symmetric matrices with real entries. For any $d \in \N^\star$, let $\Cc ( [0,T], \R^d)$ denote the set of continuous functions from $[0,T]$ to $\R^d$. The set $\Cc( [0,T], \R^d)$ will always be endowed with the topology associated to the uniform convergence on the compact $[0,T]$. For a probability space of the form $\Omega := \Cc ( [0,T], \R^d)$ and an associated filtration $\F$, we will have to consider processes $\psi : [0,T] \times \Cc ( [0,T], \R^k) \longrightarrow E$, taking values in some Polish space $E$, which are $\F$-optional, \textit{i.e.} $\Oc(\F)$-measurable where $\Oc(\F)$ is the so-called optional $\sigma$-field generated by $\F$-adapted right-continuous processes. In particular, such a process $\psi$ is non-anticipative in the sense that $\psi (t,x) = \psi(t, x_{\cdot \wedge t})$, for all $t\in[0,T]$ and $x \in \Cc ( [0,T], \R^k)$. Furthermore, for any filtration $\F := (\Fc_t)_{t \in [0,T]}$ and probability measure $\P$, we will denote by:
\begin{itemize}[label=\raisebox{0.25ex}{\tiny$\bullet$}]
\itemsep0em
    \item $\F^\P := (\Fc_t^\P)_{t \in [0,T]}$ the usual $\P$-completion of $\F$;
    \item $\F^+ := (\Fc_t^+)_{t \in [0,T]}$ the right limit of $\F$;
    \item $\F^{\P^+} := (\Fc_t^{\P^+})_{t \in [0,T]}$ the augmentation of $\F$ under $\P$.
\end{itemize}
Finally, for technical reasons, we work under the classical ZFC set-theoretic axiom. This is a standard axiom widely used in the classical stochastic analysis theory (see \citeayn[Chapter 0.9]{dellacherie1978probabilities}). We also rely on the continuum hypothesis, which is needed to apply the aggregation result of \citeayn{nutz2012pathwise}.

%

\section{General model}\label{sec:model}

In order to highlight that our alternative approach to principal--agent problems with volatility control allows to obtain the same results as in \cite{cvitanic2018dynamic}, we focus here on the exact same model. We believe that this model is sufficiently general to convince the reader of the relevance of our novel approach, and that it could then be extended in a (relatively) straightforward way to solve more complex problems.

\medskip

We fix throughout this paper a finite time horizon $T > 0$, and consider the canonical space $\Omega=\Cc([0,T];\R^d)$, $d \in \N^\star$. The output process $X$ controlled by the agent is originally defined as the canonical process on $\Omega$, \textit{i.e.}
\begin{align*}
	X_t(x) = x(t), \quad \text{for all } \; x \in \Omega, \; t\in[0,T].
\end{align*}
We denote by $\F=(\Fc_t)_{t\geq 0}$ its canonical filtration, defined for all $t\in[0,T]$ by $\Fc_t = \sigma(X_s, s \in [0,t])$. Let $\Mc$ be the set of all probability measures on $(\Omega,\Fc_T)$, and define $\Mc_{\rm loc}$ the subset of $\Mc$ such that, for each $\P \in \Mc_{\rm loc}$, $X$ is a $(\P,\F)$--local martingale whose quadratic variation, denoted $\langle X\rangle$, is absolutely continuous in time with respect to the Lebesgue measure. Using the result by \citeayn{karandikar1995pathwise}, the quadratic variation process $\langle X\rangle$ can be defined pathwise, in the sense that there exists an $\F$--progressively measurable process which coincides with the quadratic variation of $X$, $\mathbb P$--a.s. for any $\P\in \Mc_{\rm loc}$. Moreover, we can introduce the corresponding $\F$--progressively measurable density process $\widehat \sigma^2$, defined by
\begin{align}\label{eq:qvar_pathwise}
	\widehat \sigma^2_t := \limsup_{\varepsilon \searrow 0}  \dfrac{\langle X \rangle_t - \langle X \rangle_{t-\varepsilon}}{\varepsilon}, \quad t \in [0,T].
\end{align}
For all $t \in [0,T]$, $\widehat \sigma^2_t \in \S_+^{d}$, recalling that $\S_+^{d}$ denotes the set of $d \times d$ positive semi-definite symmetric matrices with real entries. We can thus define the corresponding square root process, simply denoted $\widehat \sigma$, in the usual way.

\subsection{Controlled output process}

To represent the agent's controls on the drift and the volatility of the output process $X$, we let $U$ be a finite-dimensional Euclidean set and consider the following coefficients:
\begin{align*}
	\lambda: [0,T] \times \Omega \times U \longrightarrow \R^n, \quad
	\text{and} \quad
	\sigma: [0,T] \times \Omega \times U \longrightarrow \M^{d,n}.
\end{align*}
As in \cite{cvitanic2018dynamic}, we assume that both coefficients are bounded, and that the mappings $\lambda(\cdot, u)$ and $\sigma(\cdot, u)$ are $\F$-optional for any $u \in U$. In particular, recall that such mappings are non-anticipative, in the sense that $\lambda(t,x,u) = \lambda(t, x_{\cdot \wedge t},u)$ and $\sigma(t,x,u)= \sigma(t, x_{\cdot \wedge t},u)$, for all $t\in[0,T]$ and $x \in \Omega:=\Cc ( [0,T], \R^d)$. We will rigorously define later the set $\Uc$ of admissible controls, namely appropriate processes taking values in $U$, and consider, for all $\nu \in \Uc$, the following stochastic differential equation (SDE for short), driven by an $n$-dimensional Brownian motion $W$, for $n \in \N^\star$,
\begin{align*}
	\drm X_t = \sigma (t, X,\nu_t) \big( \lambda(t, X,\nu_t) \mathrm{d}t +  \mathrm{d}W_t \big), \; t\in[0,T].
\end{align*}

To rigorously introduce the weak formulation for the agent's problem, one may need to enlarge the original canonical space by considering
\[
\widebar \Omega := \Omega \times \Cc ([0,T], \R^{n}) \times \U,
\]
where $\mathbb U$ is the collection of all finite and positive Borel measures on $[0,T] \times U$, whose projection on $[0,T]$ is the Lebesgue measure. In other words, every $q \in \mathbb \U$ can be disintegrated as $q(\mathrm{d}t,\mathrm{d} u)=q_t(\mathrm{d} u)\mathrm{d}t$, for an appropriate Borel measurable kernel $q_t$, $t \in [0,T]$. The weak formulation requires to consider a subset of $\U$, namely the set $\U_0$ of all $q \in \U$ such that the kernel $q_t$ is of the form $\delta_{\phi_t}(\mathrm{d} u)$ for some Borel function $\phi$, where as usual, $\delta_{\phi_t}$ is the Dirac mass at $\phi_t$, $t \in [0,T]$.
This space is supporting a canonical process $(X, W, \Pi)$ defined by:
\begin{align*}
	X_t(x,\omega,q) &:=x(t), \; W_t(x,\omega,q):= \omega(t), \; \Pi(x,\omega,q):=q, \; (t, x, \omega, q) \in [0,T] \times \widebar \Omega.
\end{align*}
Then, the canonical filtration $\widebar \F:=(\widebar \Fc_t)_{t\in[0,T]}$ is defined as the following $\sigma$-algebra,
\begin{align*}
	\widebar \Fc_t:=\sigma\Big( \big( X_s, W_s, \Delta_s(\varphi) \big) \text{ s.t. }  (s,\varphi)\in[0,t]\times \Cc_b \big( [0,T]\times U, \R \big) \Big),\; t\in [0,T],
\end{align*}
where $\Cc_b([0,T]\times U,\R)$ is the set of all bounded continuous functions from $[0,T]\times U$ to $\R$, and
\begin{align*}
	\Delta_s(\varphi):=\int_0^s\int_{U} \varphi(r,u) \Pi (\mathrm{d}r,\mathrm{d} u), \text{ for any } \; (s,\varphi)\in[0,T]\times \Cc_b([0,T]\times U,\R).
\end{align*}
Finally, let $\Cc^2_b(\R^k,\R)$ for any $k \in \N^\star$ denotes the set of bounded twice continuously differentiable functions from $\R^k$ to $\R$, whose first and second derivatives are also bounded. For any $(t,\psi)\in[0,T]\times \Cc^2_b(\R^{d+n}, \R)$, we set
\begin{align}\label{eq:Mphi}
	M_t(\psi):= \psi (X_t, W_t) - \int_0^t \int_{U} \bigg( &\ \widebar \Lambda (s, X, u) \cdot \nabla \psi (X_s, W_s)
	+ \frac12  {\rm Tr} \Big[ \nabla^2 \psi (X_s, W_s) \big[ \widebar \Sigma \widebar \Sigma^\top \big] (s, X, u) \Big] \bigg) \Pi (\mathrm{d}s, \mathrm{d} u),
\end{align}
where $\nabla \psi$ and $\nabla^2 \psi$ denote the gradient vector and the Hessian matrix of $\psi$, respectively, and where $\widebar \Lambda$ and $\widebar \Sigma$ are respectively the drift vector and the diffusion matrix of the $(d+n)$-dimensional vector process $(X, W)^\top$, \textit{i.e.}
\renewcommand*{\arraystretch}{1}
\begin{align*}
	\widebar \Lambda(s, x, u) := 
	\begin{pmatrix}
		[\sigma \lambda] (s, x , u) \\
		\mathbf{0}_{n}
	\end{pmatrix},
	\; 
	\widebar \Sigma(s,x, u) :=
	\begin{pmatrix}
		\mathbf{0}_{d, d} & \sigma (s,x,u)  \\
		\mathbf{0}_{n, d} & \mathrm{I}_{n} \\
	\end{pmatrix}, \; s \in [0,T], \; x \in \Omega, \; u \in U.
\end{align*}

Let $\widebar \Mc$ be the set of all probability measures on $(\widebar \Omega,\widebar \Fc_T)$. We fix some initial conditions, namely $x_0 \in \R^{d}$ representing the initial value of $X$, and define the following subset of $\widebar \Mc$.
\begin{definition}\label{def:weak_formulation}
\itemsep0em
	The subset $\widebar \Pc \subset \widebar \Mc$ is composed of all $\widebar \P$ such that
	\begin{enumerate}[label=$(\roman*)$]
		\item $M(\psi)$ is a $(\widebar \P, \widebar \F)$-local martingale on $[0,T]$ for all $\psi \in \Cc^2_b(\R^{d+n},\R);$
		\item there exists some $w_0 \in \R^{n}$ such that $\widebar \P [(X_0,W_0) = (x_0,w_0)]=1;$
		\item $\widebar \P\big[\Pi \in \U_0]=1$.
	\end{enumerate}
\end{definition}
Using the classical result by \citeayn[Theorem 4.5.2]{stroock1997multidimensional} in our already enlarged canonical space, one can rigorously \modifreview{define} the required dynamics for $X$ under $\P \in \widebar \Pc$. In particular, we have that, for all $\P \in \widebar \Pc$, there exists some $\widebar \F$-predictable process $\nu := (\nu_t)_{t \in [0,T]}$, taking values in $U$, such that $\Pi(\mathrm{d}s,\mathrm{d} u) = \delta_{\nu_s}(\mathrm{d} u)\mathrm{d}s $ $\P$--{\rm a.s.}, and we obtain the desired representation for $X$, namely
\begin{align}\label{eq:SDE_drift}
	X_t = x_0 + \int_0^t \sigma (s, X,\nu_s) \big( \lambda(s, X,\nu_s) \mathrm{d}s +  \mathrm{d}W_s \big), \; t\in[0,T],\; \P \textnormal{--a.s.}
\end{align}
With this in mind, for any $\P \in \widebar \Pc$, we denote by $\widebar \Uc(\P)$ the set of $\widebar \F$-predictable and $U$-valued processes $\nu$ such that \eqref{eq:SDE_drift} holds, and consider
\begin{align*}
    \widebar \Uc := \bigcup_{\P \in \widebar \Pc} \widebar \Uc(\P),
\end{align*}
as well as for any $\nu \in \widebar \Uc$, the set of corresponding probability measures $\widebar \Pc(\nu) := \{ \P \in \widebar \Pc : \nu \in \widebar \Uc(\P) \}$. Finally, to avoid technical considerations (see \Cref{rk:MRP1} below), we make the following standing assumption.

\modifreview{
\begin{assumption}\label{ass:weak_uniqueness}
Each agent's admissible controls induce a unique weak solution to {\rm SDE} \eqref{eq:SDE_drift}, \textit{i.e.} $\nu \in \Uc$ with
\begin{align}\label{def:admissible_effort}
    \Uc := \big\{ \nu \in \widebar \Uc : \widebar \Pc(\nu) \; \text{is a singleton} \; \{ \P^\nu \} \big\}.
\end{align}
\end{assumption}}
We thus have a one-to-one correspondence between any admissible control $\nu \in \Uc$ and a probability measure $\P \in \Pc$, where $\Pc$ is defined accordingly:
\begin{align*}
    \Pc := \bigcup_{\nu \in \Uc} \widebar \Pc(\nu).
\end{align*}
In the following, we will alternatively refer to the unique probability measure in $\Pc$ associated with an admissible control $\nu \in \Uc$ as $\P^\nu$, or the unique admissible control in $\Uc$ associated with a probability measure $\P \in \Pc$ as $\nu^\P$. 

\begin{remark}\label{rk:MRP1}
    {\rm \Cref{ass:weak_uniqueness}} is not enforced in {\rm \cite{cvitanic2018dynamic}}, but is considered here to avoid technical considerations regarding the martingale representation property. We refer to {\rm \Cref{rk:MRP2}} for further discussions on this point, but wish to justify here that this assumption is not far-fetched. Indeed, it is relatively common in the study of principal--agent problems to at least assume that the martingale representation is valid {\rm(}see {\rm \citeayn[Assumption 2.2. and Remark 2.3]{hernandez-santibanez2024principalmultiagents}} for example{\rm)}, and weak uniqueness for {\rm SDE}~\eqref{eq:SDE_drift} is a sufficient and more explicit condition for such representation to hold. Furthermore, as one of the main objectives of principal--agent problems is to determine `revealing' contracts, \textit{i.e.} contracts that reveal the agent's optimal efforts, it is actually questionable to consider efforts that would induce several weak solutions for {\rm SDE}~\eqref{eq:SDE_drift}, especially in view of applications.
\end{remark}

\subsection{The original principal--agent problem}

As usual in principal--agent problems, the principal first chooses a contract $\xi$, which represents the compensation paid to the agent at the final time horizon $T$. Naturally, this contract can only be indexed on what is observable by the principal, and should incentivise the agent to appropriately choose a control $\nu \in \Uc$, or equivalently a probability $\P \in \Pc$. Here, the principal only observes the output process $X$, and therefore any admissible contract $\xi$ should be $\Fc_T$-measurable, recalling that $\F := (\Fc_t)_{t \in [0,T]}$ represents the canonical filtration of $X$.

\paragraph*{The agent's problem.} Given a compensation $\xi$, the optimisation problem faced by the agent is defined by
\begin{align}\label{eq:pb_agent}
	V_{\rm A} (\xi) := \sup_{\P \in \Pc} J_{\rm A} (\xi,\P), \; \text{ with } \; 
    J_{\rm A} (\xi, \P ) := \E^\P \bigg[ \Kc_{\rm A}^\P(T) \, \xi - \int_0^T \Kc_{\rm A}^\P(s) c_{\rm A} \big(s, X, \nu_s^{\P} \big) \mathrm{d}s \bigg], \quad \P \in \Pc,
\end{align}
where
\begin{enumerate}[label=$(\roman*)$]
\itemsep0em
	\item the measurable cost function $c_{\rm A} : [0,T] \times \Cc([0,T], \modifreview{\R^d}) \times U \longrightarrow \R$ is such that for any $u \in U$, the map $(t,x) \longmapsto c_{\rm A}(t,x,u)$ is $\F$-optional, and satisfies the following integrability condition:
	\begin{align}\label{eq:integrability_cost}
		\sup_{\P\in \Pc} \E^\P \bigg[ \int_0^T \big| c_{\rm A} \big(t, X, \nu^{\P}_t \big) \big|^p \drm t \bigg] < + \infty, \; \text{for some } p >1;
	\end{align}
	\item the discount factor $\Kc_{\rm A}^\P$ is defined for all $\P \in \Pc$ by
	\[
	\Kc_{\rm A}^\P(t) :=\exp\bigg(-\int_0^t k_{\rm A} \big(s, X, \nu_s^\P \big) \drm s \bigg), \quad t \in [0,T],
	\]
	where $k_{\rm A} : [0,T] \times \Cc([0,T], \modifreview{\R^d}) \times U \longrightarrow \R$ is assumed to be measurable, bounded, and such that for any $u \in U$, the map $(t,x)\longmapsto k_{\rm A}(t,x,u)$ is $\F$-optional.
\end{enumerate}
To ensure that the agent's problem \eqref{eq:pb_agent} is well-defined, we require minimal integrability on the contracts as in \cite[Equation  (2.5)]{cvitanic2018dynamic}, namely
\begin{align}\label{eq:integrability_contract_agent}
	\sup_{\P\in \Pc} \E^\P \big[ |\xi |^p \big] < + \infty, \; \text{ for some } \; p > 1.
	\tag{${\rm I}^p_{\rm A}$}
\end{align}
In addition, we assume as usual that the agent has a reservation utility level $R_{\rm A} \in \R$, which conditions his acceptance of the contract. We thus denote by $\Xi$ the set of admissible contracts:
\begin{align}\label{def:contract_set}
	\Xi := \big\{\Fc_T\text{-measurable random variable $\xi$, satisfying \eqref{eq:integrability_contract_agent} and} \; V_{\rm A} (\xi) \geq R_{\rm A} \big\}.
\end{align}
Finally, we denote by $\Pc^\star(\xi)$ the set of optimal responses of the agent to a contract $\xi \in \Xi$, \textit{i.e.} 
\begin{align}\label{def:optimal_response}
    \Pc^\star(\xi) := \big\{ \P^\star \in \Pc \; \text{such that} \; V_{\rm A} (\xi) = J_{\rm A} (\xi,\P^\star) \big\}.
\end{align}

\begin{remark}
    As mentioned in {\rm \cite[Remark 2.1]{cvitanic2018dynamic}}, the model and the approach can be extended in a straightforward way to account for risk-aversion in the agent’s utility function. More precisely, in the agent's expected utility \eqref{eq:pb_agent}, $\xi$ can be replaced by $U_{\rm A}(\xi)$ for an appropriate utility function $U_{\rm A}$, modulo the modification of the integrability condition \eqref{eq:integrability_contract_agent}. As we choose to follow the model considered in {\rm \cite{cvitanic2018dynamic}}, we do not consider any utility functions here, but we will nevertheless informally address a case of {\rm CARA} utility functions through the examples in {\rm \Cref{sec:examples}}.
\end{remark}

\paragraph*{The principal's problem.} The principal's optimisation problem is then defined as follows,
\begin{align}\label{eq:pb_principal}
	V_{\rm P} := \sup_{\xi \in \Xi} \sup_{\P^\star \in \Pc^\star(\xi)} J_{\rm P} (\xi,\P^\star), \; \text{ with } \; 
    J_{\rm P} (\xi, \P ) := \E^\P \Big[ \Kc_{\rm P}(T) \, U_{\rm P} \big( \ell(X)-\xi \big) \Big], \quad (\xi,\P) \in \Xi \times \Pc,
\end{align}
where
\begin{enumerate}[label=$(\roman*)$]
\itemsep0em
	\item $U_{\rm P} : \R \longrightarrow \R$ is a non-decreasing and concave utility function;
	\item $\ell : \Omega \longrightarrow \R$ is a liquidation function with linear growth;
	\item the discount factor $\Kc_{\rm P}$ is defined as follows, with $k_{\rm P}: [0,T] \times \Omega \longrightarrow \R$ measurable, bounded and $\F$-optional,
	\[
	\Kc_{\rm P} (t) := \exp\bigg(-\int_0^t k_{\rm P} (s, X) \drm s \bigg), \quad 0\leq t\leq s\leq T.
	\]
\end{enumerate}
Through \eqref{eq:pb_principal}, we implicitly assume as in \cite{cvitanic2018dynamic} that, if the agent's optimal response to a given contract $\xi$ is not unique, then the principal is able to choose the most appropriate one. On the contrary, if a contract $\xi \in \Xi$ is such that $\Pc^\star(\xi) = \varnothing$, then by convention of the supremum, the corresponding expected utility for the principal would be $- \infty$. We can thus assume without loss of generality that any relevant contract $\xi \in \Xi$ should ensure that there exists at least an optimal response $\P^\star \in \Pc^\star(\xi)$, \textit{i.e.} $\Pc^\star(\xi) \neq \varnothing$.

\medskip

For future reference, we summarise this original problem below.
\begin{problem}[Original principal--agent problem]\label{pb:PA_original}
Consider the non-standard stochastic control problem \eqref{eq:pb_principal},
\begin{align*}
	V_{\rm P} := \sup_{\xi \in \Xi} \sup_{\P^\star \in \Pc^\star(\xi)} J_{\rm P} (\xi, \P^\star), \; \text{ with } \; \Pc^\star(\xi) \neq \varnothing \; \text{ for } \; \xi \in \Xi.
\end{align*}
\end{problem}
Usually, \Cref{pb:PA_original} cannot be solved directly: the standard approach is to determine an appropriate form of contracts which simplifies both the agent's and the principal's problems, and such that the restriction to this form of contracts is without loss of generality.

\medskip

Note that the previous setting and assumptions are relatively standard in continuous-time principal--agent problems, and in particular coincide with the framework described in \cite{cvitanic2018dynamic}. Indeed, as the goal of this paper is to provide a new approach to solve such principal--agent problems, which in particular requires to show that this approach allows to achieve the same results obtained by \citeayn{cvitanic2018dynamic}, the decision was made to closely follow their original framework. It is nevertheless possible that some assumptions could be relaxed, or at least modified to consider alternative frameworks. For example, in the case of a principal--agent problem with non-continuous output processes (and many agents), one can refer to the recent paper by \citeayn{hernandez-santibanez2024principalmultiagents} for more general assumptions. It should be noted, however, that the latter focuses on control processes that can be represented through equivalent changes of measures, thus \textit{a priori} excluding volatility control. 

\subsection{The existing approach, via 2BSDEs}\label{ss:summary_CPT}

In this section, we describe briefly and rather informally the general approach developed by \citeayn{cvitanic2018dynamic}, as this will help in comparing it with our new approach, detailed later in \Cref{sec:new_approach}. 

\paragraph*{Revealing contracts.} The general approach developed in \cite{cvitanic2018dynamic} involves first identifying a class of contracts, offered by the principal, that are \emph{revealing}, in the sense that the agent's optimal response to the contract can easily be computed. With this in mind, we first define the appropriate agent's Hamiltonian,
\begin{align}\label{def_hamiltonian_CPT}
	\Hc_{\rm A} (t, x, y, z, \gamma) &:= \sup_{u \in U} \; h_{\rm A} (t, x, y, z, \gamma, u), \\ \text{ with } \; 
	h_{\rm A} (t, x, y, z, \gamma, u) &:= [\sigma \lambda ] (t, x, u) \cdot z + \dfrac12 {\rm Tr} \big[ \gamma [ \sigma \sigma^\top](t, x, u) \big] - c_{\rm A}(t, x, u) - k_{\rm A}(t,x,u) y, \quad u \in U, \nonumber
\end{align} 
for any $(t,x,y,z,\gamma) \in [0,T] \times \Omega \times \R \times \R^d \times \M^d$.
We then introduce the relevant form of contracts, namely $\xi = Y_T^{y_0,Z,\Gamma}$, where the process $Y^{y_0,Z,\Gamma}$ is defined as the solution to the following \modifreview{SDE},
\begin{align}\label{eq:contract_CPT}
	Y_t^{y_0,Z,\Gamma} = y_0 - \int_0^t \Hc_{\rm A} \big(s, X, Y^{y_0,Z,\Gamma}_s, Z_s, \Gamma_s \big) \drm s + \int_0^t Z_s \cdot \drm X_s + \dfrac12 \int_0^t {\rm Tr} \big[ \Gamma_s \drm \langle X \rangle_s \big], \quad \P\textnormal{--a.s.}, \; \text{for all } \P \in \Pc,
\end{align}
for a constant $y_0 \in \R$ and a pair $(Z,\Gamma)$ of processes satisfying appropriate integrability conditions. More precisely, for any $\F$-predictable $\R^d$-valued process $Z$ and any $\F$-optional $\R$-valued process $Y$, we consider the following norms,
\begin{align}\label{eq:norm}
	\| Z \|^p_{\H^p} := \sup_{\P \in \Pc} \E^\P \Bigg[ \bigg( \int_0^T  Z_t^\top \widehat \sigma^2_t Z_t \drm t \bigg)^{p/2} \Bigg],
	\; \text{ and } \; 
	\| Y \|^p_{\D^p} := \sup_{\P \in \Pc} \E^\P \bigg[ \sup_{t \in [0,T]} \big| Y_t \big|^p \bigg].
\end{align}
We can then define the set $\Vc$ of appropriate processes $(Z,\Gamma)$ as in \cite[Definition 3.2]{cvitanic2018dynamic}, recalled below.
\begin{definition}\label{def:contrat_vol}
	Let $y_0 \in \R$. For any $\F$-predictable processes $(Z,\Gamma)$ taking values in $\R^d \times \M^d$, define the process $Y^{y_0,Z,\Gamma}$ using \eqref{eq:contract_CPT}. We will denote $(Z,\Gamma) \in \Vc$ if moreover
	\begin{enumerate}[label=$(\roman*)$]
		\item $\| Z \|^p_{\H^p} < \infty$ and $\| Y^{y_0,Z,\Gamma} \|^p_{\D^p} < \infty$ for some $p > 1$;
		\item there exists $\P \in \Pc$ such that
		\begin{align*}
			\Hc_{\rm A} \big(t, X, Y_t^{y_0,Z,\Gamma}, Z_t, \Gamma_t \big) = h_{\rm A} \big(t, X, Y_t^{y_0,Z,\Gamma}, Z_t, \Gamma_t, \nu^\P_t \big), \quad \drm t \otimes \P\text{--a.e. on } [0,T] \times \Omega.
		\end{align*}
	\end{enumerate}
\end{definition}
Then, the relevant contracts $\xi \in \Xi$ are those that admit the representation $\xi = Y_T^{y_0,Z,\Gamma}$, where the pair $(Z,\Gamma) \in \Vc$ as well as the constant $y_0 \in \R$ in \eqref{eq:contract_CPT} will be optimally chosen by the principal.

\paragraph*{Agent's optimal response.} Contracts of the form \eqref{eq:contract_CPT} are called \emph{revealing}, as it is straightforward to prove that any agent's best response to such contracts corresponds to a maximiser of his Hamiltonian. Indeed, relying on Ito's formula, it is proved in \cite[Proposition 3.3]{cvitanic2018dynamic} that, given a contract of the form $\xi = Y_T^{y_0,Z,\Gamma}$, $(y_0,Z,\Gamma) \in \R \times \Vc$, any optimal response $\P^\star \in \Pc^\star(\xi)$ is such that 
\begin{align*}
	\Hc_{\rm A} \big(t, X, Y_t^{y_0,Z,\Gamma}, Z_t, \Gamma_t \big) = h_{\rm A} \big(t, X, Y_t^{y_0,Z,\Gamma}, Z_t, \Gamma_t, \nu^{\P^\star}_t \big), \quad \drm t \otimes \P^\star\textnormal{--a.e. on } [0,T] \times \Omega. 
\end{align*}
For later use, we can define, through a classical measurable selection argument, the following function $u^\star$,
\begin{align}\label{eq:u*_CPT}
	u^\star (t,x,y,z,\gamma) \in \argmax_{u \in U} h_{\rm A} (t, x, y, z, \gamma, u), \; (t,x,y,z,\gamma) \in [0,T] \times \Omega \times \R \times \R^d \times \M^d,
\end{align}
so that the agent's best response to a contract $\xi = Y_T^{y_0,Z,\Gamma}$ can be written as $\nu^{\P^\star}_t := u^\star (t, X, Y_t^{y_0,Z,\Gamma}, Z_t, \Gamma_t)$, $t \in [0,T]$, $\P^\star$--a.s. One can also prove that the value of the agent's optimal control problem associated to such contract $\xi$ of the form \eqref{eq:contract_CPT} boils down to $y_0$, namely $V_{\rm A}(Y_T^{y_0,Z,\Gamma})=y_0$. Consequently, to ensure the agent's participation in the contractual agreement, it is clear that the principal must choose $y_0 \geq R_{\rm A}$.

\paragraph*{Principal's problem.} Under the \emph{restriction} to contracts of the form \eqref{eq:contract_CPT}, the principal's problem boils down to a more standard stochastic control problem of the following form:
\begin{align}\label{pb_principal_CPT_simple}
	\widetilde V_{\rm P} := \sup_{y_0 \geq R_{\rm A}} \underline V_{\rm P} (y_0), \; \text{ with } \;
	\underline V_{\rm P} (y_0) := \sup_{(Z,\Gamma) \in \Vc} \sup_{\P^\star \in \Pc^\star(Y_T^{y_0,Z,\Gamma})} J_{\rm P} \big(Y_T^{y_0,Z,\Gamma}, \P^\star \big).
\end{align}
In particular, for fixed $y_0 \geq R_{\rm A}$, the value $\underline V_{\rm P} (y_0)$ corresponds to the value of a stochastic control problem with two state variables, $X$ and $Y^{y_0,Z,\Gamma}$. Notice that, under $\P^\star \in \Pc^\star(Y_T^{y_0,Z,\Gamma})$, the process $X$ admits the following dynamics,
\begin{align}\label{eq:X-dynamics-CPT}
	\drm X_t = \sigma \big(t, X, \nu^{\P^\star}_t \big) \Big( \lambda \big(t, X, \nu_t^{\P^\star} \big) \mathrm{d}t + \mathrm{d}W_t \Big), \quad t\in[0,T],
\end{align}
corresponding to \cite[Equation (3.6)]{cvitanic2018dynamic}. Using this dynamics for $X$ under the optimal effort, one can also compute the dynamics satisfied by the process $Y^{y_0,Z,\Gamma}$ defined in \eqref{eq:contract_CPT} to obtain for all $t\in[0,T]$
\begin{align*}
	\drm Y_t^{y_0,Z,\Gamma} = &\ \bigg( Z_t \cdot [\sigma \lambda]\big(t, X, \nu_t^{\P^\star} \big) + \dfrac12 {\rm Tr} \Big[ \Gamma_t \big[ \sigma \sigma^\top \big] \big(t, X, \nu_t^{\P^\star} \big)  \Big] - \Hc_{\rm A} \big(t, X, Y^{y_0,Z,\Gamma}_t, Z_t, \Gamma_t \big) \bigg) \drm t
	+ Z_t \cdot \sigma \big(t, X, \nu_t^{\P^\star} \big) \mathrm{d}W_t,
\end{align*}
similarly to \cite[Equation (3.8)]{cvitanic2018dynamic}.
Using the definition of the Hamiltonian as well as the fact that the optimal effort is defined as one of its maximiser, 
we can actually further simplify the previous dynamics to obtain,
\begin{align}\label{eq:Y-dynamics-CPT}
	\drm Y_t^{y_0,Z,\Gamma} = &\ \Big( c_{\rm A} \big(t, X, \nu_t^{\P^\star} \big) + Y_t^{y_0,Z,\Gamma} k_{\rm A} \big(t, X, \nu_t^{\P^\star} \big)  \Big) \drm t
	+ Z_t \cdot \sigma \big(t, X, \nu_t^{\P^\star} \big) \mathrm{d}W_t, \quad t\in[0,T].
\end{align}
Again for future reference, it would be convenient to summarise this \emph{restricted} problem as follows.
\begin{problem}[Restriction to contracts of the form \eqref{eq:contract_CPT}]\label{pb:PA_solution_CPT}
Consider the standard stochastic control problem \eqref{pb_principal_CPT_simple},
\begin{align*}
	\widetilde V_{\rm P} := \sup_{y_0 \geq R_{\rm A}} \underline V_{\rm P} (y_0), \; \text{ with } \;
	\underline V_{\rm P} (y_0) := \sup_{(Z,\Gamma) \in \Vc} \sup_{\P^\star \in \Pc^\star(Y_T)} J_{\rm P} \big(Y_T,\P^\star \big).
\end{align*} 
where the state variable $(X,Y)$ is solution to the following system of {\rm SDE}s:
\begin{subequations}\label{eq:dynamics-CPT}
\begin{align}
	\drm X_t &= \sigma \big(t, X, \nu^{\P^\star}_t \big) \Big( \lambda \big(t, X, \nu^{\P^\star}_t \big) \mathrm{d}t + \mathrm{d}W_t \Big), \qquad \qquad \qquad \qquad \quad \; \;  t\in[0,T], \\
    \drm Y_t &= \Big( c_{\rm A} \big(t, X, \nu^{\P^\star}_t \big) + Y_t \, k_{\rm A} \big(t, X, \nu^{\P^\star}_t \big) \Big)  \drm t
	+ Z_t \cdot \sigma \big(t, X, \nu^{\P^\star}_t \big) \mathrm{d}W_t, \quad t\in[0,T],
\end{align}
\end{subequations}
coupled through the agent's optimal effort $\nu^{\P^\star}_t := u^\star (t, X, Y_t, Z_t, \Gamma_t)$ under $\P^\star \in \Pc^\star(Y_T)$.
\end{problem}
In a Markovian setting, one can solve this stochastic control problem as usual, namely by writing the principal's Hamiltonian as well as the Hamilton-Jacobi-Bellman (HJB for short) equation satisfied by the value function $\underline V_{\rm P} (y_0)$ (see \cite[Theorem 3.9]{cvitanic2018dynamic}), and finally optimise it over $y_0 \geq R_{\rm A}$. More importantly for our purpose in this paper, one can already remark that the value of \Cref{pb:PA_solution_CPT} should be lower than the value in \Cref{pb:PA_original}, since considering contracts $\xi$ of the form \eqref{eq:contract_CPT} is a restriction of the original set $\Xi$ of admissible contracts. This result can be found in \cite[Proposition 3.4]{cvitanic2018dynamic}, and is a direct consequence of \cite[Proposition 3.3]{cvitanic2018dynamic} mentioned above. For further reference, this result is stated in the lemma below.
\begin{lemma}\label{lem:pb1>pb2}
    The original value defined by {\rm \Cref{pb:PA_original}} is greater than the value in {\rm \Cref{pb:PA_solution_CPT}}, namely $V_{\rm P} \geq \widetilde V_{\rm P}$.
\end{lemma}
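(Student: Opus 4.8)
The plan is to view \Cref{pb:PA_solution_CPT} as nothing more than the restriction of \Cref{pb:PA_original} to the subclass of contracts admitting the representation \eqref{eq:contract_CPT}, and then to conclude by monotonicity of the supremum. Concretely, I would show that the family of contracts
\[
\big\{ Y_T^{y_0,Z,\Gamma} : y_0 \geq R_{\rm A}, \ (Z,\Gamma) \in \Vc \big\}
\]
is contained in the admissible set $\Xi$, that each such contract admits at least one optimal response, and that the principal's criterion $J_{\rm P}$ is literally the same object in both problems. Once this is established, $\widetilde V_{\rm P}$ is a supremum of $J_{\rm P}(\xi,\P^\star)$ taken over a subfamily of the pairs $(\xi,\P^\star)$ appearing in the definition of $V_{\rm P}$, so the inequality $\widetilde V_{\rm P} \leq V_{\rm P}$ follows immediately.

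The core of the argument is therefore the admissibility check. Fixing $y_0 \geq R_{\rm A}$ and $(Z,\Gamma) \in \Vc$, and setting $\xi := Y_T^{y_0,Z,\Gamma}$, I would verify the three defining properties of $\Xi$ in turn. First, $\xi$ is $\Fc_T$-measurable, since $Y^{y_0,Z,\Gamma}$ is constructed through the SDE \eqref{eq:contract_CPT} driven by $X$, its quadratic variation $\langle X \rangle$, and the $\F$-predictable integrands $(Z,\Gamma)$, all of which are $\F$-adapted. Second, the integrability condition \eqref{eq:integrability_contract_agent} holds because $|\xi| = |Y_T^{y_0,Z,\Gamma}| \leq \sup_{t\in[0,T]} |Y_t^{y_0,Z,\Gamma}|$, and $\| Y^{y_0,Z,\Gamma} \|^p_{\D^p} < \infty$ for some $p>1$ by \Cref{def:contrat_vol}$(i)$. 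Third, the participation constraint is met, since the agent's-response analysis recalled in \Cref{ss:summary_CPT} (following \cite[Proposition 3.3]{cvitanic2018dynamic}) gives $V_{\rm A}(Y_T^{y_0,Z,\Gamma}) = y_0 \geq R_{\rm A}$. Hence $\xi \in \Xi$.

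It then remains to confirm that such a contract is genuinely relevant, i.e. $\Pc^\star(\xi) \neq \varnothing$. This is exactly what \Cref{def:contrat_vol}$(ii)$ provides: there exists $\P \in \Pc$ under which the agent's control attains the Hamiltonian maximiser $\drm t \otimes \P$-a.e., and by the revealing property of \cite[Proposition 3.3]{cvitanic2018dynamic} any such $\P$ is an optimal response, so $\P \in \Pc^\star(\xi)$. Since $J_{\rm P}$ is defined identically in \eqref{eq:pb_principal} and in \Cref{pb:PA_solution_CPT}, for every such $\xi$ the inner supremum $\sup_{\P^\star \in \Pc^\star(\xi)} J_{\rm P}(\xi,\P^\star)$ is the same quantity in both problems; taking first this inner supremum and then the outer supremum over the smaller family $\{ Y_T^{y_0,Z,\Gamma} \}$ can only yield a value no larger than the supremum over all of $\Xi$, giving $\widetilde V_{\rm P} \leq V_{\rm P}$. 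I do not expect any genuine obstacle here: the inequality is a pure \emph{supremum-over-a-subset} argument, and the only points demanding care are the admissibility verifications above—particularly the participation constraint and the non-emptiness of $\Pc^\star(\xi)$—each of which is handed to us directly by \Cref{def:contrat_vol} together with the agent's-response results recalled in \Cref{ss:summary_CPT}.
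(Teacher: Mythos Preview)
Your proposal is correct and follows essentially the same approach as the paper. The paper does not give a standalone proof of this lemma but simply observes that contracts of the form \eqref{eq:contract_CPT} constitute a restriction of the admissible set $\Xi$, deferring to \cite[Propositions 3.3 and 3.4]{cvitanic2018dynamic}; your write-up makes this supremum-over-a-subset argument explicit by verifying $\Fc_T$-measurability, integrability via $\|Y^{y_0,Z,\Gamma}\|_{\D^p}$, the participation constraint $V_{\rm A}(Y_T^{y_0,Z,\Gamma})=y_0\geq R_{\rm A}$, and non-emptiness of $\Pc^\star(\xi)$ from \Cref{def:contrat_vol}$(ii)$, which is exactly the content of those cited results.
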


\paragraph*{Optimality of the revealing contracts. } To summarise, when restricting the study to contracts $\xi \in \Xi$ of the form \eqref{eq:contract_CPT}, one can first easily determine the agent's optimal response to such contract, namely by computing the maximiser of his Hamiltonian. Then, still under this restriction, the principal's problem boils down to a more standard stochastic problem, namely \Cref{pb:PA_solution_CPT}.
These two steps are therefore quite straightforward using standard tools from stochastic control theory. However, the last and crucial step requires to prove that the restriction to revealing contracts is without loss of generality. More precisely, one need to argue that the value of the original principal's problem, namely $V_{\rm P}$ recalled in \Cref{pb:PA_original}, actually coincides with the value when restricting to revealing contracts, \textit{i.e.} $\widetilde V_{\rm P}$ in \Cref{pb:PA_solution_CPT}. While it is clear that $V_{\rm P} \geq \widetilde V_{\rm P}$, as mentioned in \Cref{lem:pb1>pb2} above, the main mathematical challenge is to show the converse inequality, namely $V_{\rm P} \leq \widetilde V_{\rm P}$. This result, stated in \cite[Theorem 3.6]{cvitanic2018dynamic}, relies on the sophisticated theory of second-order BSDEs. We deliberately omit to further develop this step here, as it is precisely the part that our new approach allows us to bypass. It nevertheless seems essential to stress that this is the most difficult stage, which probably prevents a wider application of principal--agent problems with volatility control, but also hampers natural extensions of these problems, as discussed in introduction. \modif{Surprisingly, our approach reveals that if \Cref{ass:duality} is not satisfied, the method in \cite{cvitanic2018dynamic} may fail to yield optimal contracts. We refer the reader to \Cref{rk:duality} for a more detailed discussion of this point, and to \Cref{ss:counter-example} for an illustrative example.}

\section{A new approach}\label{sec:new_approach}

In this section, we first start by formalising a `first-best' alternative problem, in which the principal, in addition to offering a contract, also controls the quadratic variation of the output process. We then provide in \Cref{ss:approach} a straightforward resolution approach for this alternative problem, relying on the theory of BSDEs only. 
We will finally prove in \Cref{ss:penalisation_contract} that this reformulation actually leads to the exact same solution compared to the original problem considered in \Cref{sec:model}. 

\subsection{The `first-best' formulation}\label{ss:fb_formulation}

As already mentioned, the approach developed by \citeayn{cvitanic2018dynamic} mostly relies on the observation that, if the principal has access to the trajectory of $X$, she can actually compute its quadratic variation $\langle X \rangle$ pathwise. In general, observing the quadratic variation would still prevent the principal from deducing the controls on the volatility, hence the relevance of considering contract of the form \eqref{eq:contract_CPT}, indexed on $X$ and $\langle X \rangle$ through sensitivity parameters $Z$ and $\Gamma$ to be optimised. The main idea behind our reformulation is to assume further that, since the principal can compute the quadratic variation $\langle X \rangle$, she could actually directly control it. In other words, the idea here is to study an alternative `first-best' formulation of the original principal--agent problem with volatility control, in which the principal enforces a contract $\xi$ as well as a quadratic variation characterised by its density process $\Sigma$. The agent should then optimise his effort in order to maximise his expected utility given such contract $\xi$ as before, but now the admissible set of efforts is restricted to those achieving the quadratic variation imposed by the principal. More precisely, we will consider in the reformulated problem that, given a contract $\xi$ and a quadratic variation density process $\Sigma$, the agent should optimally choose his effort $\nu$ on the drift and the volatility of the output process $X$ so that $\drm \langle X \rangle_t = \Sigma_t \drm t$ for all $t \in [0,T]$. Then, the principal's optimisation problem boils down to choosing the optimal contract $\xi$ and the optimal quadratic variation density process $\Sigma$.

\medskip

To consider the agent's problem in this `first-best' reformulation, we shall again fix a contract $\xi$, but also an appropriate process $\Sigma$, representing the density of the quadratic variation with respect to the Lebesgue measure. Both would be chosen optimally by the principal, anticipating the agent's best response. 
With this in mind, we define the admissible set of processes $\Sigma$ by
\begin{align}\label{eq:set_qvar}
	\Sc := \big\{\Sigma \; \F\text{--progressively measurable process, taking values in} \; \S^d_+ \},
\end{align}
recalling that $\S_+^{d}$ denotes the set of $d \times d$ positive semi-definite symmetric matrices with real entries. 
For any process $\Sigma \in \Sc$, we will denote by $\Sigma^{1/2}$ its $(d,d)$-dimensional symmetric square root, and by $\Sigma^{-1/2}$ its pseudo-inverse.

\medskip

For fixed $\Sigma \in \Sc$, the agent's admissible efforts are now limited to those that achieve the desired quadratic variation. 
Mathematically, the set of admissible controls should be modified as follows:
\begin{align}\label{def:admissible_ctrl_agent_FB}
	\Uc^\circ(\Sigma) &:= \big\{\nu \in \Uc \; : \; \nu_t \in U_t^\circ(X,\Sigma_t), \; t \geq 0\}, \nonumber \\ \text{ with } \; 	U_t^\circ(x,S) &:= \big\{ u \in U \; : \; \big[\sigma \sigma^\top \big] (t,x,u) = S \big\}, \quad (t,x,S) \in [0,T] \times \Omega \times \S^d_+.
\end{align}
Accordingly, the set $\Pc^\circ(\Sigma)$ of admissible probability measures is defined by:
\begin{align*}
	\Pc^\circ(\Sigma) := \big\{ \P \in \Pc \; : \; \big[\sigma \sigma^\top \big] (t,X,\nu_t^\P) = \Sigma_t, \; \P\text{--a.s.}\}.
\end{align*}

\begin{remark}\label{rk:weakformulation_FB}
	In this framework, as the density of the quadratic variation is fixed to some process $\Sigma \in \Sc$, one can actually write the weak formulation for the agent's problem by means of Girsanov's theorem. More precisely, one can originally define the canonical process $X$ through the following {\rm SDE},
	\begin{align}\label{eq:SDE_qvar}
		X_t = x_0 + \int_0^t \Sigma_s^{1/2} \mathrm{d}W^\circ_s, \quad t \in [0,T],
	\end{align}
	where $W^\circ$ is a $d$-dimensional Brownian Motion. We can show that starting from a weak solution $\widebar \P \in \Pc$ to \eqref{eq:SDE_qvar}, one can construct, for any $\nu \in \Uc^\circ(\Sigma)$, a weak solution $\P^\nu \in \Pc^\circ(\Sigma)$ to the original {\rm SDE} \eqref{eq:SDE_drift} through absolute continuous change of measure. 

    \medskip

    We proceed in two steps. We first relate the previous {\rm SDE} to the following one:
    \begin{align}\label{eq:SDE_driftless}
		X_t = x_0 + \int_0^t \sigma (s, X,\nu_s) \mathrm{d}W_s, \quad t \in [0,T],
	\end{align}
    where $W$ is a $n$-dimensional Brownian Motion. For this, we consider an additional $(n-d)$-dimensional Brownian Motion $W^1$, potentially defined on an extended probability space, such that $\widebar W := (W^\circ, W^1)$ is an $n$-dimensional Brownian Motion. Moreover, denote by $\Rc_n$ the set of all matrices of rotations of $\M^n$, \textit{i.e.} $R \in \M^n$ such that $RR^\top = I_n$. For any $R \in \Rc_n$, we will further denote by $R^\circ$ the $(d,n)$-dimensional matrix consisting of the $d$ first rows of $R$, and $R^1$ the $(n-d,n)$-dimensional matrix consisting of the $n-d$ remaining rows of $R$. 
    For a given control process $\nu \in \Uc^\circ(\Sigma)$, let $(R_t)_{t \in [0,T]}$ be an $\Rc_n$-valued optional process with
	\begin{align*}
		R_t^\circ := \Sigma_t^{-1/2} \sigma(t,X,\nu_t), \quad t \in [0,T],
	\end{align*}
    where $\Sigma_t^{-1/2}$ denotes the pseudo-inverse of $\Sigma_t^{1/2}$ for all $t \in [0,T]$, and $(R^1_t)_{t \in [0,T]}$ arbitrary. By Levy's characterisation of Brownian Motion, the process $W$ defined by $\drm W_t = R_t^{\top} \drm \widebar W_t$ for all $t \in [0,T]$ is also a $\widebar \P$--Brownian Motion, and {\rm SDE} \eqref{eq:SDE_qvar} can be rewritten as
	\begin{align*}
		X_t = x_0 + \int_0^t \Sigma_s^{1/2} \mathrm{d}W^\circ_s
		= x_0 + \int_0^t \Sigma_s^{1/2} R_s^\circ \mathrm{d}W_s
		= x_0 + \int_0^t \sigma (s, X,\nu_s) \mathrm{d}W_s, \quad t \in [0,T].
	\end{align*}
    In other words, any weak solution $\widebar \P$ to {\rm SDE} \eqref{eq:SDE_qvar} is also a weak solution to {\rm SDE} \eqref{eq:SDE_driftless}. Note that the converse also holds, simply by Levy's characterisation of Brownian Motion. 

    \medskip
 
    Consider then the following exponential local martingale
    \begin{align}\label{eq:girsanov}
		M^\nu_t := \Ec_t \bigg( \int_0^\cdot \lambda(s,X,\nu_s) \drm W_s \bigg), \quad t \in [0,T],
	\end{align}
	where $\Ec$ denotes the Doleans--Dade exponential, namely
	\begin{align*}
		\Ec_t(M) := \exp \bigg( M_t - \dfrac12 \langle M \rangle_t \bigg), \quad t \in [0,T],
	\end{align*}
	for any local martingale $M$. 
    As $\lambda$ is assumed to be bounded, the process $M^\nu$ defined above is actually a martingale, and one can thus apply Girsanov's theorem. In particular, by defining the following change of probability,
	\begin{align*}
		\dfrac{\drm \P^\nu}{\drm \widebar \P} \bigg|_{\Fc_T} 
		= M^\nu_T,
	\end{align*}
    the process $W^\nu$ defined by
	\begin{align*}
		W^{\nu}_\cdot := W_\cdot - \int_0^\cdot \lambda (s,X,\nu_s) \drm s
    \end{align*}
	is a $n$-dimensional $\P^\nu$--Brownian Motion, and $X$ admits the following representation:
	\begin{align*}
		X_t = x_0 + \int_0^t [\sigma \lambda] (s,X,\nu_s) \drm s
		+ \int_0^t \sigma (s,X,\nu_s) \mathrm{d}W^{\nu}_s, \quad t \in [0,T].
	\end{align*}
	Therefore, starting from a weak solution $\widebar \P$ to \eqref{eq:SDE_driftless}, any admissible control process $\nu \in \Uc^\circ(\Sigma)$ induces a weak solution $\P^\nu \in \Pc^\circ(\Sigma)$ to {\rm SDE} \eqref{eq:SDE_drift}. The converse is also true, simply by inverting the above construction. 
\end{remark}
Informally, the previous remark highlights that, when the quadratic variation is fixed, then the agent's impact on the output process $X$ can be defined as equivalent change of measures through Girsanov's theorem. In other words, for fixed $\Sigma \in \Sc$, the set $\Pc^\circ(\Sigma)$ is dominated by $\widebar \P$, weak solution to \eqref{eq:SDE_driftless}. Note that $\widebar \P$ obviously depends on the original choice of $\Sigma \in \Sc$, but for simplicity we decide to omit this dependency in the notations, especially since $\Sigma \in \Sc$ will be fixed to solve the agent problem. 

\begin{remark}
The previous remark is also mentioned in {\rm \cite[Section 4.4]{cvitanic2018dynamic}}. Indeed, in order to deal with volatility control in the original stochastic control problem for the agent, the idea is to first isolate the control of the quadratic variation, thus considering the following reformulation of the agent's problem:
    \begin{align*}
	   V_{\rm A} (\xi) = \sup_{\Sigma \in \Sc} \sup_{\P \in \Pc^\circ(\Sigma)} J_{\rm A} (\xi,\P).
    \end{align*}
For fixed $\Sigma \in \Sc$, all probability measures $\P \in \Pc^\circ(\Sigma)$ are equivalent through Girsanov's theorem, as mentioned in the previous remark. One can therefore represent the value function of the agent for fixed quadratic variation as a backward stochastic differential equation. Then, since the agent can optimise over admissible quadratic variations, his problem becomes a supremum of {\rm BSDE}s, \textit{i.e.} a second-order {\rm BSDE}. While we will follow the exact same first step, namely to represent the agent's continuation utility as a {\rm BSDE}, we will not have to deal with the second step as in our `first-best' problem, the quadratic variation is chosen by the principal, not delegated to the agent.
\end{remark}

We can finally formalise the principal--agent problem in this `first-best' scenario. More precisely, given a contract $\xi$ and a quadratic variation density process $\Sigma \in \Sc$, the optimisation problem faced by the agent is defined by
\begin{align}\label{eq:pb_agent_FB}
	V^\circ_{\rm A} (\xi,\Sigma) := \sup_{\P \in \Pc^\circ(\Sigma)} J_{\rm A} (\xi,\P).
\end{align}
In parallel to \eqref{def:contract_set} and \eqref{def:optimal_response}, for fixed $\Sigma \in \Sc$, we define the set $\Xi^\circ(\Sigma)$ of admissible contracts as
\begin{align}\label{def:contract_set_FB}
	\Xi^\circ(\Sigma) := \big\{\Fc_T\text{-measurable random variable $\xi$, satisfying \eqref{eq:integrability_contract_agent} and} \; V^\circ_{\rm A} (\xi,\Sigma) \geq R_{\rm A} \big\}.
\end{align}
and denote by $\Pc^{\circ,\star}(\xi,\Sigma)$ the set of agent's optimal response.

\medskip

Then, the goal of the principal is to maximise her objective function by choosing the optimal quadratic variation process $\Sigma \in \Sc$ as well as the optimal contract $\xi \in \Xi^\circ(\Sigma)$, anticipating the agent's optimal response:
\begin{align}\label{eq:pb_principal_FB}
	V_{\rm P}^\circ := \sup_{\Sigma \in \Sc} \sup_{\xi \in \Xi^\circ(\Sigma)} \sup_{\P^\star \in \Pc^{\circ,\star}(\xi,\Sigma)} J_{\rm P} (\xi,\P^\star).
\end{align}
Recall that by convention, the supremum over an empty set is equal to $- \infty$. In particular, if the principal suggests a quadratic variation which is not implementable by the agent, then the set $\Pc^{\circ,\star}(\xi,\Sigma)$ would be empty, leading to a utility of $- \infty$ for the principal, which cannot be optimal. Therefore, we can assume without loss of generality that 
for any potentially optimal $\Sigma \in \Sc$, there exists an admissible effort $\nu \in \Uc^\circ(\Sigma)$, and thus a corresponding probability measure $\P^\nu \in \Pc^{\circ}(\Sigma)$, such that for all $t \in [0,T]$, $\Sigma_t = [\sigma \sigma^\top] (t,X,\nu_t), \; \P^\nu$--a.s. This implies in particular that for all $t \in [0,T]$, $\Sigma_t$ should take values in $\S_t(X)$, defined for all $x \in \Omega$ by
\begin{align}\label{eq:set_Sigma}
	\S_t(x) := \big\{S \in \S_+^{d} \; : \; S = \big[\sigma \sigma^\top \big] (t,x,u) \; \text{ for some } u \in U \big\}.
\end{align}
Furthermore, recall that the original set $\Uc$ of admissible controls is defined in \Cref{ass:weak_uniqueness} such that, for any control $\nu \in \Uc$, there exists a unique weak solution to the controlled SDE \eqref{eq:SDE_drift}. In particular, the set $\Uc^\circ(\Sigma)$ of admissible (constrained) controls also satisfies this property. Therefore, under the previous restriction that $\Sigma \in \Sc$ should be such that $\Uc^\circ(\Sigma) \neq \varnothing$, it is clear that weak uniqueness also holds for SDE~\eqref{eq:SDE_qvar}. In other words, for fixed $\Sigma \in \Sc$, the weak solution $\widebar \P$ to SDE~\eqref{eq:SDE_qvar} defined in \Cref{rk:weakformulation_FB} is actually unique, and therefore admits the predictable martingale representation property (see for example \citeayn[Theorem III.4.29]{jacod2003limit}). 

\begin{remark}\label{rk:MRP2}
    This assumption that weak uniqueness holds for {\rm SDE}~\eqref{eq:SDE_drift}, and thus for {\rm SDE}~\eqref{eq:SDE_qvar}, which is not enforced in {\rm \cite{cvitanic2018dynamic}}, is considered here to avoid technical considerations regarding the martingale representation property. Indeed, without weak uniqueness, the solution $\widebar \P$ to {\rm SDE}~\eqref{eq:SDE_qvar} may not admit the predictable martingale representation property. More precisely, the decomposition of any $(\F,\widebar \P)$-local martingale with respect to $X$ would in general involve an extra martingale term, orthogonal to $X$. As this martingale representation property is used to derive the optimal form of contracts, such an additional martingale term would complicate the study. In {\rm \cite{cvitanic2018dynamic}}, this additional martingale term can be hidden in the component `$K$' of the solution $(Y,Z,K)$ to the appropriate second-order {\rm BSDE}, as the set of probability measures considered is naturally saturated {\rm(}see {\rm \citeayn[Definitions 5.1 and 5.2]{possamai2018stochastic})}. We refer back to {\rm \Cref{rk:MRP1}} for a discussion on the reasonableness of this assumption.
\end{remark}


Again for future reference, we can summarise this \emph{alternative} `first-best' problem as follows.
\begin{problem}\label{pb:PA_FB_reformulation}
The `first-best' reformulation of the original principal--agent problem is defined by
\begin{align}
    V_{\rm P}^\circ := \sup_{\Sigma \in \Sc} \sup_{\xi \in \Xi^\circ(\Sigma)} \sup_{\P^\star \in \Pc^{\circ,\star}(\xi,\Sigma)} J_{\rm P} (\xi,\P^\star),
\end{align}
where $\Pc^{\circ,\star}(\xi,\Sigma) := \{ \P^\star \in \Pc^\circ(\Sigma) \; : \; V^\circ_{\rm A} (\xi,\Sigma) = J_{\rm A} (\xi, \P^\star) \} \neq \varnothing$ for $\Sigma \in \Sc$ and $\xi \in \Xi^\circ(\Sigma)$.
\end{problem}
We first provide in \Cref{ss:approach} a very standard approach, relying on BSDEs theory, to characterise the optimal form of contracts for this `first-best' alternative problem. The result is given in \Cref{thm:solution_FB}. We then prove the main result of this paper, namely \Cref{thm:main}, which states the equivalence between \Cref{pb:PA_original} and \Cref{pb:PA_FB_reformulation}, \textit{i.e.} $V_{\rm P} = V_{\rm P}^\circ$. 
Before proceeding, one can already notice that the principal's value $V_{\rm P}^\circ$ in the alternative `first-best' problem summarised above, namely \Cref{pb:PA_FB_reformulation}, is greater than the value $V_{\rm P}$ in the original problem (\Cref{pb:PA_original}). This result is stated in the following \Cref{lem:pb3>pb1}, whose proof mostly relies on the observation that, in \Cref{pb:PA_FB_reformulation}, the principal has an additional control compared to the original problem.

\begin{lemma}\label{lem:pb3>pb1}
    The value of {\rm \Cref{pb:PA_FB_reformulation}} is greater than the value of {\rm \Cref{pb:PA_original}}, \textit{i.e.} 
    $V_{\rm P}^\circ \geq V_{\rm P}$.
\end{lemma}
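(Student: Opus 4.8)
The plan is to show that every admissible configuration in \Cref{pb:PA_original} can be lifted to an admissible configuration in \Cref{pb:PA_FB_reformulation} attaining the same value for the principal, the extra control $\Sigma$ being chosen as the quadratic variation density \emph{induced} by the agent's optimal response. Concretely, I would fix a contract $\xi \in \Xi$ with $\Pc^\star(\xi) \neq \varnothing$ (the only relevant contracts, by the convention in \Cref{pb:PA_original}) together with an optimal response $\P^\star \in \Pc^\star(\xi)$. Under $\P^\star$ the canonical process $X$ solves \eqref{eq:SDE_drift}, so its quadratic variation density coincides with $[\sigma\sigma^\top](t,X,\nu_t^{\P^\star})$, $\drm t \otimes \P^\star$--a.e. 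I would then simply set $\Sigma := \widehat\sigma^2$, the pathwise density from \eqref{eq:qvar_pathwise}, which is $\F$--progressively measurable and $\S^d_+$--valued, hence $\Sigma \in \Sc$.

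The core of the argument is a short sandwich for the constrained agent value $V^\circ_{\rm A}(\xi,\Sigma)$. First, $\P^\star \in \Pc^\circ(\Sigma)$: indeed $\P^\star \in \Pc$ and $[\sigma\sigma^\top](t,X,\nu_t^{\P^\star}) = \widehat\sigma^2_t = \Sigma_t$, $\P^\star$--a.s., by the very choice of $\Sigma$. Since $\Pc^\circ(\Sigma) \subseteq \Pc$, restricting the supremum gives $V^\circ_{\rm A}(\xi,\Sigma) \leq V_{\rm A}(\xi)$; conversely, because $\P^\star \in \Pc^\circ(\Sigma)$ and $\P^\star$ is optimal for the unconstrained agent, $V^\circ_{\rm A}(\xi,\Sigma) \geq J_{\rm A}(\xi,\P^\star) = V_{\rm A}(\xi)$. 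Hence $V^\circ_{\rm A}(\xi,\Sigma) = V_{\rm A}(\xi) \geq R_{\rm A}$, so that $\xi \in \Xi^\circ(\Sigma)$, and this chain of equalities also yields $V^\circ_{\rm A}(\xi,\Sigma) = J_{\rm A}(\xi,\P^\star)$, \textit{i.e.} $\P^\star \in \Pc^{\circ,\star}(\xi,\Sigma)$.

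It then remains to collect these facts: the triple $(\Sigma,\xi,\P^\star)$ is admissible in \Cref{pb:PA_FB_reformulation} and produces the same principal payoff $J_{\rm P}(\xi,\P^\star)$, so $V_{\rm P}^\circ \geq J_{\rm P}(\xi,\P^\star)$. Taking the supremum over $\P^\star \in \Pc^\star(\xi)$ and then over $\xi \in \Xi$ gives $V_{\rm P}^\circ \geq V_{\rm P}$.

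I do not anticipate a serious obstacle: the statement is essentially a monotonicity result reflecting that the principal gains an additional control in the reformulation, and the only point deserving care is the sandwich showing that the \emph{same} probability $\P^\star$ stays optimal once the agent is constrained to $\Pc^\circ(\Sigma)$. This becomes immediate precisely because $\Sigma$ is taken to be the density realised by $\P^\star$ itself, rather than an arbitrary element of $\Sc$. The measurability of $\Sigma = \widehat\sigma^2$ is already guaranteed by the pathwise construction recalled around \eqref{eq:qvar_pathwise}, and the integrability requirement \eqref{eq:integrability_contract_agent} transfers verbatim, since it is formulated as a supremum over all of $\Pc$ and is unaffected by passing to the subset $\Pc^\circ(\Sigma)$.
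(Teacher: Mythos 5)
Your proposal is correct and follows essentially the same route as the paper: fix $\xi\in\Xi$ and $\P^\star\in\Pc^\star(\xi)$, take $\Sigma$ to be the quadratic-variation density realised under $\P^\star$ (the paper writes it as $[\sigma\sigma^\top](t,X,\nu^{\P^\star}_t)$, you as $\widehat\sigma^2$ — the same object $\P^\star$--a.s., your version being marginally cleaner on $\F$-progressive measurability), and then run the same sandwich $V_{\rm A}(\xi)=J_{\rm A}(\xi,\P^\star)\le V^\circ_{\rm A}(\xi,\Sigma)\le V_{\rm A}(\xi)$ to conclude $\xi\in\Xi^\circ(\Sigma)$ and $\P^\star\in\Pc^{\circ,\star}(\xi,\Sigma)$ before taking suprema. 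No gaps.
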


\begin{proof}[\Cref{lem:pb3>pb1}]
Recall that we can assume, without loss of generality, that any potentially optimal contract $\xi \in \Xi$ for \Cref{pb:PA_original} should be such that $\Pc^\star(\xi) \neq \varnothing$, otherwise $V_{\rm P} = - \infty$. One can thus consider $\P^\star \in \Pc^\star(\xi)$, and its corresponding optimal effort process $\nu^{\P^\star} \in \Uc$, leading to the following representation for the output process $X$:
\begin{align*}
	X_t = x_0 + \int_0^t \sigma \big(s, X,\nu^{\P^\star}_s \big) \big( \lambda \big(s, X,\nu^{\P^\star}_s \big) \mathrm{d}s +  \mathrm{d}W_s \big), \; t\in[0,T],\; \P^\star \textnormal{--a.s.}
\end{align*}
This naturally implies that the quadratic variation of $X$, namely $\langle X \rangle$, observable by the principal, satisfies 
\begin{align*}
    \langle X \rangle_t = \int_0^t \Sigma^{\P^\star}_s \drm s, \quad \text{where} \quad
    \Sigma^{\P^\star}_t := \big[ \sigma \sigma^\top \big] \big(t, X,\nu^{\P^\star}_t \big), \; t \in [0,T], \; \P^\star \textnormal{--a.s.}.
\end{align*}
Note that the process $\Sigma^{\P^\star}$ defined above belongs to the set $\Sc$ by definition and we have $\nu^{\P^\star} \in \Uc^\circ(\Sigma^{\modifreview{\P^\star}})$, or equivalently, $\P^\star \in \Pc^\circ(\Sigma^{\P^\star})$. Additionally, by definition of $\P^\star \in \Pc^\star(\xi)$ in \eqref{def:optimal_response}, we have $V_{\rm A} (\xi) = J_{\rm A} (\xi,\P^\star) \geq R_{\rm A}$, and therefore 
\begin{align*}
    R_{\rm A} \leq V_{\rm A} (\xi) = J_{\rm A} (\xi,\P^\star) \leq \sup_{\P \in \Pc^\circ(\Sigma^{\P^\star})} J_{\rm A} (\xi,\P) =: V_{\rm A}^\circ \big(\xi, \Sigma^{\P^\star} \big),
\end{align*}
implying in particular that $\xi \in \Xi^\circ(\Sigma^{\P^\star})$. On the other hand, since we naturally have $V_{\rm A} (\xi) \geq V_{\rm A}^\circ (\xi, \Sigma)$ for all $\Sigma \in \Sc$ and $\xi \in \Xi$, we can also deduce from the previous inequality that $\P^\star \in \Pc^{\circ,\star}(\xi,\Sigma^{\P^\star})$. To summarise, for any $\xi \in \Xi$ and $\P^\star \in \Pc^\star(\xi)$, there exists $\Sigma^{\P^\star} \in \Sc$ such that $\xi \in \Xi^\circ(\Sigma^{\P^\star})$ and $\P^\star \in \Pc^{\circ,\star}(\xi,\Sigma^{\P^\star})$, which implies
\begin{align*}
    \sup_{\xi \in \Xi} \sup_{\P^\star \in \Pc^{\star}(\xi)} J_{\rm P} (\xi,\P^\star) \leq 
    \sup_{\Sigma \in \Sc} \sup_{\xi \in \Xi^\circ(\Sigma)} \sup_{\P^\star \in \Pc^{\circ,\star}(\xi,\Sigma)} J_{\rm P} (\xi,\P^\star)
\end{align*}
and therefore concludes the proof.
\end{proof}

\subsection{Solving the `first-best' problem}\label{ss:approach}

While \Cref{pb:PA_original} is proved to be equivalent to \Cref{pb:PA_solution_CPT} in \cite{cvitanic2018dynamic} using the theory of 2BSDEs, we demonstrate in this section that the alternative `first-best' problem, namely \Cref{pb:PA_FB_reformulation} introduced above, can also be reformulated into a more standard stochastic control problem, defined as \Cref{pb:first-best_solution} below, but relying on BSDEs only. 

\medskip

To be consistent with the approach developed in \cite{cvitanic2018dynamic}, we first introduce the appropriate Hamiltonian for the agent when the quadratic variation is fixed, as well as the relevant form of contracts for the reformulated problem \eqref{eq:pb_principal_FB}. With this in mind, let $(t,x,y,z,S) \in [0,T] \times \Omega \times \R \times \R^d \times \S^d_+$, and consider the following Hamiltonian:
\begin{align}\label{def_hamiltonian_FB}
	\Hc^\circ_{\rm A} (t, x, y, z, S) &:= \sup_{u \in U_t^\circ(x,S)} \; h^\circ_{\rm A} (t, x, y, z, u), \\ \text{ with } \; 
	h^\circ_{\rm A} (t, x, y, z, u) &:= [\sigma \lambda ] (t, x, u) \cdot z - c_{\rm A}(t, x, u) - k_{\rm A}(t,x,u) y, \quad u \in U_t^\circ(x,S), \nonumber
\end{align} 
recalling that the set $U_t^\circ(x,S)$ is defined by \eqref{def:admissible_ctrl_agent_FB}.

\begin{remark}\label{rk:hamiltonian}
	The Hamiltonian defined above in \eqref{def_hamiltonian_FB} is a simplified version of the original Hamiltonian in \eqref{def_hamiltonian_CPT}. Indeed, we consider here that the quadratic variation is fixed to some level through its density process $\Sigma$, and in particular, cannot be optimised by the agent. In other words, at each time $t \in [0,T]$, the agent has to choose his effort $\nu_t$ taking values in $U$ and satisfying $[\sigma \sigma^\top] (t,X,\nu_t) = \Sigma_t$. Under this constraint, the `$\gamma$ part' of $h_{\rm A}$ defined in \eqref{def_hamiltonian_CPT} becomes a constant, uncontrolled by the agent. Mathematically speaking, we have:
	\begin{align*}
		\sup_{u \in U_t^\circ(x,S)} h_{\rm A} (t, x, y, z, \gamma, u) 
		&= \sup_{u \in U_t^\circ(x,S)} h^\circ_{\rm A} (t, x, y, z, u) + \dfrac12 {\rm Tr} \big[ \gamma S \big]. 
	\end{align*}
	The previous equality also highlights the following relationship between the two Hamiltonians, 
	\begin{align}\label{eq:link_hamiltonian}
		\Hc_{\rm A} (t, x, y, z, \gamma) &= \sup_{S \in \S_t(x) } \bigg\{ \Hc^\circ_{\rm A} (t, x, y, z, S) + \dfrac12 {\rm Tr} \big[ \gamma S \big] \bigg\}.
	\end{align}
	In other words, $2 \Hc_{\rm A} (\cdot, \gamma)$ is the convex conjugate, or Legendre--Fenchel transformation, of $-2 \Hc^\circ_{\rm A} (\cdot, S)$.
\end{remark}
	
In our reformulated problem, because the quadratic variation is originally fixed by the principal, the relevant form of contract will actually be similar to the one considered in principal--agent problems with drift control only. More precisely, we are led to consider here $\xi = Y_T^{y_0,Z}$, where the process $Y^{y_0,Z}$ is defined as the solution to the following \modifreview{SDE},
\begin{align}\label{eq:contract_FB}
	Y_t^{y_0,Z} = y_0 - \int_0^t \Hc_{\rm A}^\circ \big(s, X, Y^{y_0,Z}_s, Z_s, \Sigma_s \big) \drm s + \int_0^t Z_s \cdot \drm X_s, \quad \P\textnormal{--a.s.}, \; \text{for all } \P \in \Pc^\circ (\Sigma),
\end{align}
for a constant $y_0 \in \R$ and a process $Z$ satisfying appropriate integrability conditions. In parallel with \Cref{def:contrat_vol}, we define the set $\Vc^\circ(\Sigma)$ of appropriate processes $Z$ as follows. 
\begin{definition}\label{def:contract_FB}
	Let $y_0 \in \R$ and $\Sigma \in \Sc$. For any $\F$--predictable processes $Z$ taking values in $\R^d$, define the process $Y^{y_0,Z}$ using \eqref{eq:contract_FB}. We will denote $Z \in \Vc^\circ(\Sigma)$ if moreover
	\begin{enumerate}[label=$(\roman*)$]
		\item $\| Z \|^p_{\H^p} < \infty$ and $\| Y^{y_0,Z} \|^p_{\D^p} < \infty$ for some $p > 1;$
		\item there exists $\P \in \Pc^\circ(\Sigma)$ such that
		\begin{align}\label{eq:optimal_effort_FB}
			\Hc^\circ_{\rm A} \big(t, X, Y_t^{y_0,Z}, Z_t, \Sigma_t \big) = h^\circ_{\rm A} \big(t, X, Y_t^{y_0,Z}, Z_t, \nu_t^{\P} \big), \quad \drm t \otimes \P\text{--a.e. on } [0,T] \times \Omega.
		\end{align}
	\end{enumerate}
\end{definition}
For fixed $\Sigma \in \Sc$, any contract defined by $\xi = Y_T^{y_0,Z}$ through \eqref{eq:contract_FB} for some $y_0 \in \R$ and $Z \in \Vc^\circ(\Sigma)$ is an $\Fc_T$-measurable random variable satisfying the integrability condition \eqref{eq:integrability_contract_agent}. If this contract satisfies in addition the agent's participation constraint, we conclude that $\xi \in \Xi^\circ(\Sigma)$. Conversely, we will prove below in \Cref{thm:solution_FB} that the restriction to contracts of the form $\xi = Y_T^{y_0,Z}$, for $(y_0,Z) \in [R_{\rm A},\infty) \times \Vc^\circ(\Sigma)$, is without loss of generality.

\begin{remark}
	Intuitively, when the density $\Sigma \in \Sc$ of the quadratic variation is fixed, the new form of contracts \eqref{eq:contract_FB} can be derived from the general form of contracts \eqref{eq:contract_CPT}. Indeed, starting from \eqref{eq:contract_CPT} under $\P \in \Pc^\circ(\Sigma)$, and using in particular {\rm \Cref{rk:hamiltonian}} on the Hamiltonian, we derive the following representation,
	\begin{align*}
		Y_t^{y_0,Z,\Gamma} 
		&= y_0 - \int_0^t \bigg( \Hc^\circ_{\rm A} \big(s, X, Y^{y_0,Z,\Gamma}_s, Z_s, \Sigma_s \big) + \dfrac12 {\rm Tr} \big[ \Gamma_s \Sigma_s \big] \bigg) \drm s + \int_0^t Z_s \cdot \drm X_s + \dfrac12 \int_0^t {\rm Tr} \big[ \Gamma_s  \Sigma_s \big] \drm s, \; t \in [0,T],
	\end{align*}
	thus naturally leading to \eqref{eq:contract_FB}, since the part of the contract indexed by $\Gamma$ simplifies.
\end{remark}

Before stating the main result of this section, we first solve the agent's problem \eqref{eq:pb_agent_FB}, in order to confirm the intuition on the relevant contract form \eqref{eq:contract_FB} highlighted by the previous remark. The proof of this result only relies on the theory of BSDEs, as opposed to 2BSDEs, needed to characterise the agent's problem originally defined in \eqref{eq:pb_agent} (see \cite[Proposition 4.6]{cvitanic2018dynamic}).

\begin{proposition}\label{prop:solve_agent_pb}
Let $\Sigma \in \Sc$ and $\xi \in \Xi^\circ(\Sigma)$. There exist a unique $\Fc_0^{\widebar \P+}$-measurable random variable $Y_0$, satisfying $\E^{\widebar \P}\left[Y_0\right] \geq R_{\rm A}$, and a unique $Z \in \H^p$ such that $\xi = Y_T$, for $Y \in \D^p$ defined as the solution to the following \modifreview{\rm SDE:}
\begin{align}\label{eq:ODE_FB}
    Y_t = Y_0 - \int_0^t \Hc_{\rm A}^\circ \big(s, X, Y_s, Z_s, \Sigma_s \big) \drm s + \int_0^t Z_s \cdot \drm X_s, \; t \in [0,T], \; \widebar \P\textnormal{--a.s.}
\end{align}
Moreover, 
\begin{enumerate}[label=$(\roman*)$]
    \item $V_{\rm A}^\circ (\xi,\Sigma) = \E^{\widebar \P}\left[Y_0\right] \geq R_{\rm A};$
    \item $\P^\circ \in \Pc^{\circ,\star}(\xi,\Sigma)$ if and only if $\Hc^\circ_{\rm A} (t, X, Y_t, Z_t, \Sigma_t) = h^\circ_{\rm A} (t, X, Y_t, Z_t, \nu_t^{\P^\circ})$, $\drm t \otimes \P^\circ\text{--a.e. on } [0,T] \times \Omega$.
\end{enumerate}
\end{proposition}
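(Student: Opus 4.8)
The plan is to recognise the representation \eqref{eq:ODE_FB} as a backward stochastic differential equation under the reference measure $\widebar\P$ and to read off the agent's value from its initial datum, in the spirit of Sannikov's trick. Throughout, $\Sigma\in\Sc$ is fixed and I work under $\widebar\P$, the (unique, by \Cref{ass:weak_uniqueness}) weak solution of \eqref{eq:SDE_qvar}, under which $X$ is a continuous martingale with $\drm\langle X\rangle_t=\Sigma_t\,\drm t$ and, crucially, enjoys the predictable martingale representation property. This last point is what allows me to look for a solution with no orthogonal martingale component: writing \eqref{eq:ODE_FB} in backward form,
\begin{align*}
Y_t = \xi + \int_t^T \Hc_{\rm A}^\circ\big(s,X,Y_s,Z_s,\Sigma_s\big)\,\drm s - \int_t^T Z_s\cdot\drm X_s, \quad t\in[0,T], \; \widebar\P\text{--a.s.},
\end{align*}
this is a genuine BSDE driven by $X$ with terminal condition $\xi$ and generator $\Hc_{\rm A}^\circ$.

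Next I would establish existence and uniqueness of $(Y,Z)\in\D^p\times\H^p$. Since $\drm X_t=\Sigma_t^{1/2}\drm W^\circ_t$ under $\widebar\P$, the natural unknown is $\widetilde Z:=\Sigma^{1/2}Z$, for which $\int_0^\cdot Z_s\cdot\drm X_s=\int_0^\cdot\widetilde Z_s\cdot\drm W^\circ_s$ and $\|Z\|_{\H^p}$ coincides with the Brownian $\mathbb H^p$-norm of $\widetilde Z$; this turns the equation into a standard Brownian BSDE. The generator is Lipschitz in $(y,\widetilde z)$: in $y$ because $k_{\rm A}$ is bounded, and in $\widetilde z$ because, on the constraint set $U^\circ_t(x,S)$ where $\sigma\sigma^\top=S$, the matrix $\Sigma^{-1/2}\sigma$ has orthonormal rows, so the effective coefficient $\Sigma^{-1/2}[\sigma\lambda]$ is bounded uniformly — this is exactly where the restriction to $U^\circ_t$ in \eqref{def:admissible_ctrl_agent_FB} removes any degeneracy of $\Sigma$. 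Combining the $L^p$-integrability of $\xi$ from \eqref{eq:integrability_contract_agent} with that of the cost from \eqref{eq:integrability_cost}, the classical $L^p$ well-posedness theory for Lipschitz BSDEs yields a unique pair $(Y,Z)\in\D^p\times\H^p$; the initial value $Y_0$ is then $\Fc_0^{\widebar\P+}$-measurable.

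For the value identity, I would fix an arbitrary $\P\in\Pc^\circ(\Sigma)$, with associated control $\nu^\P\in\Uc^\circ(\Sigma)$, and apply integration by parts to the discounted process $\Kc_{\rm A}^\P(t)Y_t$, using the dynamics of $X$ under $\P$ (drift $[\sigma\lambda](t,X,\nu_t^\P)$) and of $Y$ from \eqref{eq:ODE_FB}. The drift of $\Kc_{\rm A}^\P Y$ reorganises into $\Kc_{\rm A}^\P(t)\big(c_{\rm A}(t,X,\nu^\P_t)-(\Hc_{\rm A}^\circ-h_{\rm A}^\circ)(t,X,Y_t,Z_t,\nu^\P_t)\big)$, where the gap $\Hc_{\rm A}^\circ-h_{\rm A}^\circ\ge 0$ since $\nu^\P_t\in U^\circ_t(X,\Sigma_t)$. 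Taking $\P$-expectations (the stochastic integral being a true martingale by the $\D^p\times\H^p$ bounds, boundedness of $\sigma$ and of $\Kc_{\rm A}^\P$) and using $\E^\P[Y_0]=\E^{\widebar\P}[Y_0]$ (as $\P$ and $\widebar\P$ agree at time $0$) gives
\begin{align*}
J_{\rm A}(\xi,\P)=\E^{\widebar\P}[Y_0]-\E^\P\bigg[\int_0^T\Kc_{\rm A}^\P(t)\big(\Hc_{\rm A}^\circ-h_{\rm A}^\circ\big)\big(t,X,Y_t,Z_t,\nu^\P_t\big)\,\drm t\bigg]\le\E^{\widebar\P}[Y_0].
\end{align*}
Hence $V^\circ_{\rm A}(\xi,\Sigma)\le\E^{\widebar\P}[Y_0]$, and since $\Kc_{\rm A}^\P>0$, equality holds for a given $\P^\circ$ precisely when the gap vanishes $\drm t\otimes\P^\circ$-a.e., which is the Hamiltonian condition of item $(ii)$.

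It remains to attain the supremum, which simultaneously gives the equality in item $(i)$ and the nonemptiness of $\Pc^{\circ,\star}(\xi,\Sigma)$. I would invoke a measurable selection theorem to pick $u^\circ(t,x,y,z,S)\in\argmax_{u\in U^\circ_t(x,S)}h_{\rm A}^\circ(t,x,y,z,u)$ and set $\nu^\circ_t:=u^\circ(t,X,Y_t,Z_t,\Sigma_t)$; by construction it makes the gap vanish, so the induced measure $\P^\circ=\P^{\nu^\circ}$ is optimal and $V^\circ_{\rm A}(\xi,\Sigma)=\E^{\widebar\P}[Y_0]$. Finally $\E^{\widebar\P}[Y_0]=V^\circ_{\rm A}(\xi,\Sigma)\ge R_{\rm A}$ because $\xi\in\Xi^\circ(\Sigma)$. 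I expect the main obstacle to lie precisely in this last step: verifying that the selected $\nu^\circ$ is genuinely admissible, \textit{i.e.} $\nu^\circ\in\Uc^\circ(\Sigma)$ and the Girsanov construction of \Cref{rk:weakformulation_FB} yields a bona fide $\P^{\nu^\circ}\in\Pc^\circ(\Sigma)$ (for which \Cref{ass:weak_uniqueness} is what guarantees a well-defined measure); the well-posedness of the BSDE, once the orthonormal-rows observation neutralises the degeneracy of $\Sigma$, is comparatively routine.
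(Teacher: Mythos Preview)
Your plan matches the paper's proof closely: recognise \eqref{eq:ODE_FB} as a Lipschitz BSDE under $\widebar\P$, invoke $L^p$-well-posedness, then obtain the value identity via It\^o's formula on $\Kc_{\rm A}^\P Y$. Two technical points deserve mention, as the paper handles them differently.

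First, on integrability: the conditions \eqref{eq:integrability_contract_agent} and \eqref{eq:integrability_cost} are stated as suprema over $\Pc$, not under $\widebar\P$ itself (which need not lie in $\Pc$, since $\lambda\equiv0$ may not be achievable). The paper explicitly transfers from $\widebar\P$ to some $\P^\nu\in\Pc^\circ(\Sigma)$ via H\"older's inequality, controlling the Radon--Nikodym density $M^\nu_T$ through the boundedness of $\lambda$; this yields $\E^{\widebar\P}$-integrability only for exponents $p'\in(1,p_1\wedge p_2)$. Your sentence ``combining the $L^p$-integrability\ldots'' elides this step.

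Second, on attaining the supremum: the paper does \emph{not} construct an optimal $\P^\circ$ via measurable selection. Instead, it proves $V_{\rm A}^\circ\ge\E^{\widebar\P}[Y_0]$ by a pointwise $\varepsilon$-optimal choice (for each $\varepsilon>0$ and $(t,x,y,z,S)$, pick $u^\varepsilon\in U_t^\circ(x,S)$ with gap $\le\varepsilon$), which avoids both measurable selection and the admissibility check you flag. The characterisation $(ii)$ is then read off by \emph{invoking} the standing assumption $\Pc^{\circ,\star}(\xi,\Sigma)\neq\varnothing$ from \Cref{pb:PA_FB_reformulation}, rather than proving it. Your direct-construction route is legitimate but, as you note, requires verifying that the selected $\nu^\circ$ lies in $\Uc$ (in particular that it induces a unique weak solution per \Cref{ass:weak_uniqueness}); the paper simply sidesteps this.
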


\begin{proof}[\Cref{prop:solve_agent_pb}]
\emph{Step 1.} 
We fix $\Sigma \in \Sc$ and $\xi \in \Xi^\circ(\Sigma)$ to consider the following BSDE
\begin{align}\label{eq:BSDE_proof}
    Y_t = \xi + \int_t^T \Hc_{\rm A}^\circ \big(s, X, Y_s, Z_s, \Sigma_s \big) \drm s - \int_t^T Z_s \cdot \drm X_s, \quad t \in [0,T] \quad \widebar \P\textnormal{--a.s.},
\end{align}
and argue that there exists a unique solution $(Y,Z) \in \D^p \times \H^p$ for some $p > 1$, to deduce \eqref{eq:ODE_FB}.
For this, we let $p_1 >1$ and $p_2 >1$ such that \eqref{eq:integrability_cost} and \Cref{eq:integrability_contract_agent} hold, respectively. For all $p^{\prime} \in (1,p_1)$, 
using Jensen's inequality and changing the probability from $\widebar \P$ to $\P^\nu$ using $M^\nu$ defined in \eqref{eq:girsanov}, we have
\begin{align*}
    \E^{\widebar \P} \Bigg[ \bigg(\int_0^T \big| \Hc^\circ_{\rm A} (t, X, 0,0,\Sigma_t) \big| \drm t\bigg)^{p^{\prime}} \Bigg]
    &\leq \E^{\widebar \P} \bigg[ \int_0^T \big| \Hc^\circ_{\rm A} (t, X, 0,0,\Sigma_t) \big|^{p^{\prime}} \drm t \bigg]
    = \E^{\widebar \P} \bigg[ \int_0^T \inf_{u \in U^\circ(\Sigma)} \big| c_{\rm A} (t, X, u) \big|^{p^{\prime}} \drm t \bigg]\\
    &= \E^{\P^{\nu}} \bigg[ M_T^\nu \int_0^T \inf_{u \in U^\circ(\Sigma)} \big| c_{\rm A} (t, X, u) \big|^{p^{\prime}} \drm t \bigg] \\
    &\leq \E^{\P^{\nu}} \Bigg[ \bigg( \int_0^T \inf_{u \in U^\circ(\Sigma)} \big| c_{\rm A} (t, X, u) \big|^{p^{\prime}} \drm t\bigg)^{q_1} \Bigg]^{\frac{1}{q_1}} \E^{\P^{\nu}} \Big[ \big| M_T^\nu\big |^{q_2} \Big]^{\frac{1}{q_2}},
\end{align*} 
where the last inequality comes from Holder's inequality with $q_1 := p_1/p^\prime > 1$ and $q_2 > 1$ its Holder's conjugate, \textit{i.e.} such that $\frac{1}{q_1}+\frac{1}{q_2}=1$. Since $\lambda$ is bounded, there exists a constant $C > 0$ such that
\begin{align*}
    \E^{\widebar \P} \Bigg[ \bigg(\int_0^T \big| \Hc^\circ_{\rm A} (t, X, 0,0,\Sigma_t) \big| \drm t\bigg)^{p^{\prime}} \Bigg]
    &\leq C \; \E^{\P^{\nu}} \bigg[ \int_0^T \inf_{u \in U^\circ(\Sigma)} \big| c_{\rm A} (t, X, u) \big|^{p_1} \drm t \bigg]^{\frac{1}{q_1}}
    \leq C \sup_{\P \in \Pc} \E^{\P} \bigg[ \int_0^T \big| c_{\rm A} (t, X, \nu_t) \big|^{p_1} \drm t \bigg]^{\frac{1}{q_1}},
\end{align*} 
which is finite since \eqref{eq:integrability_cost} holds for $p_1 > 1$. By similar arguments, we deduce from the fact that \Cref{eq:integrability_contract_agent} is satisfied for $p_2 > 1$ that $\E^{\widebar \P}[ |\xi |^{p^{\prime \prime}}] < + \infty$, for $p^{\prime \prime} \in (1, p_2)$. We thus obtain the following integrability condition,
\begin{align*}
    \E^{\widebar \P} \bigg[ |\xi |^{p} + \bigg(\int_0^T \big| \Hc^\circ_{\rm A} (t, X, 0,0,\Sigma_t) \big| \drm t\bigg)^{p} \bigg] < \infty, \quad p \in (1, p_1 \wedge p_2).
\end{align*}
Finally, by the boundedness of $\sigma, \lambda$ and $k_{\rm A}$, the Hamiltonian $\Hc_{\rm A}^\circ$ is uniformly Lipschitz-continuous in $(y,z)$. Since the assumptions of \citeayn[Theorem 4.2]{briand2003solutions} are satisfied (see also \citeayn[Theorem 4.1]{bouchard2018unified} for a more recent result), and $\widebar \P$ satisfies the predictable martingale representation property, we deduce that there exists a unique solution $(Y,Z) \in \D^p \times \H^p$ to BSDE \eqref{eq:BSDE_proof}, for some $p > 1$. In particular, such solution satisfies \Cref{def:contract_FB} $(i)$ and $Y_0$ is a $\Fc_0^{\widebar \P+}$-measurable random variable.

\medskip

\emph{Step 2.} We now prove the equality $(i)$, namely $V^\circ_{\rm A}(\xi,\Sigma) = \E^{\widebar \P}[Y_0]$. As a direct consequence, we will obtain that $Z \in \H^p$, introduced in Step 1, satisfies \eqref{eq:optimal_effort_FB} in \Cref{def:contract_FB}, implying that $(ii)$ is verified. By Step 1, any contract $\xi \in \Xi^\circ(\Sigma)$ can be written as
\begin{align*}
    \xi = Y_T := Y_0 - \int_0^T \Hc_{\rm A}^\circ \big(s, X, Y_s, Z_s, \Sigma_s \big) \drm s + \int_0^T Z_s \cdot \drm X_s, \quad \widebar \P\textnormal{--a.s.}
\end{align*}
Writing the dynamics under some arbitrary admissible effort $\nu \in \Uc^\circ(\Sigma)$, \textit{i.e.} under $\P^\nu \in \Pc^\circ(\Sigma)$, we have:
\begin{align*}
    \drm Y_t = \big( - \Hc_{\rm A}^\circ \big(t, X, Y_t, Z_t, \Sigma_t \big) + Z_t \cdot [\sigma \lambda] (t,X,\nu_t) \big) \drm t + Z_t \cdot \sigma  (t,X,\nu_t) \drm W_t, \quad \P^\nu\textnormal{--a.s.}
\end{align*}
Applying Ito's formula and using the definition of $h^\circ_{\rm A}$, one can compute, still under $\P^\nu \in \Pc^\circ(\Sigma)$,
\begin{align*}
    \Kc^{\P^\nu}_{\rm A}(T) Y_T - \int_0^T \Kc^{\P^\nu}_{\rm A}(t) c_{\rm A}(t,X,\nu_t) \drm t = Y_0 &- \int_0^T \Kc^{\P^\nu}_{\rm A}(t) \big( \Hc^\circ_{\rm A}(t,X,Y_t,Z_t,\Sigma_t) - h^\circ_{\rm A} (t, X, Y_t, Z_t, \nu_t) \big) \drm t\\
&+ \int_0^T \Kc^{\P^\nu}_{\rm A} (t)  Z_t \cdot \sigma (t,X,\nu_t) \drm W_t.
\end{align*}
Using this representation in the agent's objective function, and noticing that, since $Z \in \H^p$ and $k_{\rm A}, \sigma$ are bounded, the expectation of the stochastic integral term vanishes, we obtain:
\begin{align*}
    J_{\rm A} \big(\xi,\P^{\nu} \big) 
    &=\E^{\P^{\nu}}\big[Y_0\big] - \E^{\P^{\nu}} \bigg[\int_0^T \Kc^{\P^\nu}_{\rm A}(t) \big( \Hc^\circ_{\rm A} (t,X,Y_t,Z_t,\Sigma_t) - h^\circ_{\rm A} (t, X, Y_t, Z_t, \nu_t) \big) \drm t\bigg].
\end{align*}
Using the Girsanov's change of probability highlighted in \Cref{rk:weakformulation_FB}, we observe that
\begin{align*}
    \E^{\P^{\nu}}\big[ Y_0 \big] = \E^{\widebar \P}\big[M_T^\nu Y_0\big] = \E^{\widebar \P}\big[M_0^\nu Y_0\big] = \E^{\widebar \P}\big[Y_0\big],
\end{align*}
since $M^\nu$ is both an $(\F^{\widebar \P},\widebar \P)$- and an $(\F^{\widebar \P +},\widebar \P)$-martingale (see \cite[Proposition 2.2]{neufeld2014measurability}), and thus
\begin{align}\label{eq:reward}
    J_{\rm A} \big(\xi,\P^{\nu} \big) 
    = \E^{\widebar \P}\big[Y_0\big] - \E^{\P^{\nu}} \bigg[\int_0^T \Kc^{\P^\nu}_{\rm A}(t) \big( \Hc^\circ_{\rm A} (t,X,Y_t,Z_t,\Sigma_t) - h^\circ_{\rm A} (t, X, Y_t, Z_t, \nu_t) \big) \drm t\bigg].
\end{align}
On the one hand, it is clear by definition of $\Hc^\circ_{\rm A}$ and $h^\circ_{\rm A}$ that $J_{\rm A}(\xi,\P^{\nu}) \leq \E^{\widebar \P}[Y_0]$, from which follows the first inequality $V^\circ_{\rm A}(\xi,\Sigma) \leq \E^{\widebar \P}[Y_0]$. On the other hand, for all $\nu \in \Uc^\circ(\Sigma)$, we also have:
\begin{align*}
    V^\circ_{\rm A}(\xi,\Sigma) \geq J_{\rm A} \big(\xi,\P^{\nu} \big)
    = \E^{\widebar \P} [Y_0]-\E^{\P^{\nu}}\bigg[\int_0^T \Kc^{\P^\nu}_{\rm A}(t) \big( \Hc^\circ_{\rm A}(t,X,Y_t,Z_t,\Sigma_t)-h^\circ_{\rm A} (t, X, Y_t, Z_t, \nu_t) \big) \drm t\bigg].
\end{align*}
By definition of the supremum, for all $\varepsilon > 0$ and $(t,x,y,z,S) \in [0,T] \times \Omega \times \R \times \R^d \times \S_+^{d}$, there exists $u^\varepsilon \in U_t^\circ(x,S)$ such that $\Hc^\circ_{\rm A}(t,x,y,z,S)- h^\circ_{\rm A} (t, x, y, z, u^\varepsilon) \leq \varepsilon$. Replacing in the previous inequality, and using in addition the assumption that $k_{\rm A}$ is bounded, we deduce that there exists a constant $C > 0$ such that $V^\circ_{\rm A}(\xi,\Sigma) \geq \E^{\widebar \P} [Y_0] - C \varepsilon$.
By arbitrariness of $\varepsilon > 0$, we conclude that $V^\circ_{\rm A}(\xi,\Sigma) \geq \E^{\widebar \P}[Y_0]$. To summarise, for arbitrary $\nu \in \Uc^\circ(\Sigma)$, we have:
\begin{align*}
    \E^{\widebar \P}[Y_0] = V^\circ_{\rm A} \big(\xi,\Sigma\big) \geq J_{\rm A} \big(\xi,\P^{\nu} \big),
\end{align*}
and, since $\xi$ is admissible, the participation constraint must be satisfied, which implies $\E^{\widebar \P}[Y_0] \geq R_{\rm A}$. This already concludes the proof of $(i)$. Finally, recall that we implicitly assume in \Cref{pb:PA_FB_reformulation} that for any $\Sigma \in \Sc$ and $\xi \in \Xi^\circ(\Sigma)$, there should exist an optimal response for the agent, namely $\P^\circ \in \Pc^\circ(\Sigma)$ such that $V^\circ_{\rm A} (\xi,\Sigma) = J_{\rm A} (\xi, \P^\circ)$. Therefore, looking back at \eqref{eq:reward}, we deduce from the existence of $\P^\circ \in \Pc^{\circ,\star}(\xi,\Sigma)$ that
\begin{align*}
    \E^{\P^\circ} \bigg[\int_0^T \Kc^{\P^\circ}_{\rm A}(t) \Big( \Hc^\circ_{\rm A}(t,X,Y_t,Z_t,\Sigma_t) - h^\circ_{\rm A} \big(t, X, Y_t, Z_t, \nu^{\P^\circ}_t \big) \Big) \drm t \bigg] = 0,
\end{align*}
which implies, since the discount factor is (strictly) positive, that
\begin{align*}
    \Hc^\circ_{\rm A} \big(t, X, Y_t, Z_t, \Sigma_t \big) = h^\circ_{\rm A} \big(t, X, Y_t, Z_t, \nu_t^{\P^\circ} \big), \quad \drm t \otimes \P^\circ\text{--a.e. on } [0,T] \times \Omega.
\end{align*}
In particular, this completes the proof of $(ii)$, and we can further deduce that $Z$ satisfies \eqref{eq:optimal_effort_FB} in \Cref{def:contract_FB}.
\end{proof}

\Cref{prop:solve_agent_pb} provides a BSDE representation for the agent's dynamic value function, given a quadratic variation density process $\Sigma \in \Sc$ and a contract $\xi \in \Xi^\circ(\Sigma)$. This already induces a first natural representation for any contract $\xi \in \Xi^\circ(\Sigma)$, namely as the terminal value of an appropriate process $Y \in \D^p$, defined as the well-posed solution to ODE \eqref{eq:ODE_FB}. The main difference with the desired form of contracts \eqref{eq:contract_FB} lies in the initial value, which is a random variable $Y_0$ in \eqref{eq:ODE_FB} but a constant $y_0$ in \eqref{eq:contract_FB}. The following result ultimately proves that the restriction to contracts of the form $\xi = Y_T^{y_0,Z}$ as in \eqref{eq:contract_FB}, for some process $Z \in \Vc^\circ(\Sigma)$ and a constant $y_0 \geq R_{\rm A}$, is without loss of generality.

\begin{theorem}\label{thm:solution_FB}
We have $V_{\rm P}^\circ = \widetilde V^\circ_{\rm P}$, where $\widetilde V^\circ_{\rm P}$ is defined as follows,
\begin{align}\label{pb_principal_FB_simple}
    \widetilde V^\circ_{\rm P} := \sup_{y_0 \geq R_{\rm A}} \underline V_{\rm P}^\circ (y_0), \; \text{ with } \;
    \underline V_{\rm P}^\circ (y_0) := \sup_{\Sigma \in \Sc} \sup_{Z \in \Vc^\circ(\Sigma)} \sup_{\P^\circ \in \Pc^{\circ,\star}(Y_T^{y_0,Z},\Sigma)} J_{\rm P} \big(Y_T^{y_0,Z}, \P^\circ \big),
\end{align}
\modifreview{where} for $y_0 \geq R_{\rm A}$, $\Sigma \in \Sc$, and $Z \in \Vc^\circ(\Sigma)$,
\begin{enumerate}[label=$(\roman*)$]
    \item $Y^{y_0,Z}$ is defined as the solution to {\rm ODE} \eqref{eq:contract_FB};
    \item $\P^\circ \in \Pc^{\circ,\star} \big(Y_T^{y_0,Z},\Sigma \big)$ if and only if \eqref{eq:optimal_effort_FB} holds for $\P^\circ$;
    \item $V_{\rm A}^\circ \big(Y^{y_0,Z},\Sigma \big) = y_0 \geq R_{\rm A}$.
\end{enumerate}
\end{theorem}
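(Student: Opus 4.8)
The plan is to prove the equality $V_{\rm P}^\circ = \widetilde V_{\rm P}^\circ$ via two inequalities, the genuine content being that restricting to contracts of the form \eqref{eq:contract_FB} with a \emph{constant} initial value $y_0$ is without loss of generality. The characterisations $(i)$--$(iii)$ will drop out as by-products of reading \Cref{prop:solve_agent_pb} off for the specific contract $\xi = Y_T^{y_0,Z}$.

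For the easy inequality $\widetilde V_{\rm P}^\circ \leq V_{\rm P}^\circ$, I would fix $y_0 \geq R_{\rm A}$, $\Sigma \in \Sc$ and $Z \in \Vc^\circ(\Sigma)$, and verify that $\xi := Y_T^{y_0,Z}$ is admissible, i.e. $\xi \in \Xi^\circ(\Sigma)$. It is $\Fc_T$-measurable and satisfies \eqref{eq:integrability_contract_agent} by \Cref{def:contract_FB}$(i)$ (this is item $(i)$). Since the forward process $Y^{y_0,Z}$ solving \eqref{eq:contract_FB} exhibits a representation of the form \eqref{eq:ODE_FB} with the constant initial datum $y_0$, the uniqueness in \Cref{prop:solve_agent_pb} identifies it with the proposition's representation, so part $(i)$ of that result yields $V_{\rm A}^\circ(Y^{y_0,Z},\Sigma) = y_0 \geq R_{\rm A}$ — this is item $(iii)$ and confirms admissibility — while part $(ii)$ gives item $(ii)$. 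Hence the suprema defining $\underline V_{\rm P}^\circ(y_0)$ range over a sub-collection of those defining $V_{\rm P}^\circ$, and optimising over $y_0 \geq R_{\rm A}$ gives the inequality.

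The substantive direction is $V_{\rm P}^\circ \leq \widetilde V_{\rm P}^\circ$. I would fix arbitrary $\Sigma \in \Sc$, $\xi \in \Xi^\circ(\Sigma)$ and $\P^\star \in \Pc^{\circ,\star}(\xi,\Sigma)$, and apply \Cref{prop:solve_agent_pb} to write $\xi = Y_T$, where $(Y,Z) \in \D^p \times \H^p$ solves \eqref{eq:ODE_FB} with an $\Fc_0^{\widebar \P+}$-measurable initial value $Y_0$ obeying $\E^{\widebar \P}[Y_0] \geq R_{\rm A}$, and where $Z$ satisfies the conditions of \Cref{def:contract_FB}, so $Z \in \Vc^\circ(\Sigma)$. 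The crux is to \emph{de-randomise} $Y_0$: because \Cref{ass:weak_uniqueness} makes $\widebar \P$ the unique weak solution of \eqref{eq:SDE_qvar} and hence endows it with the predictable martingale representation property, the germ $\sigma$-field $\Fc_0^{\widebar \P+}$ is $\widebar \P$-trivial by Blumenthal's zero--one law; therefore $Y_0 = \E^{\widebar \P}[Y_0] =: y_0 \geq R_{\rm A}$, $\widebar \P$-a.s., and also $\P^\star$-a.s. since $\P^\star \sim \widebar \P$ through the Girsanov construction of \Cref{rk:weakformulation_FB}. With $Y_0$ now constant, $\xi = Y_T^{y_0,Z}$ is exactly of the restricted form with $\P^\star \in \Pc^{\circ,\star}(Y_T^{y_0,Z},\Sigma)$, whence $J_{\rm P}(\xi,\P^\star) = J_{\rm P}(Y_T^{y_0,Z},\P^\star) \leq \underline V_{\rm P}^\circ(y_0) \leq \widetilde V_{\rm P}^\circ$; taking the supremum over $\Sigma$, $\xi$ and $\P^\star$ concludes.

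I expect the de-randomisation of $Y_0$ to be the only delicate step, everything else being a direct transcription of \Cref{prop:solve_agent_pb}. The whole reduction rests on the triviality of $\Fc_0^{\widebar \P+}$, and this is precisely where \Cref{ass:weak_uniqueness} is indispensable: without weak uniqueness — hence without the martingale representation property and Blumenthal's law — the promised utility $Y_0$ could genuinely depend on the initial germ of the trajectory, and the reduction to a deterministic $y_0$ would break down.
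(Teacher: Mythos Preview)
Your proposal is correct. The two-step structure and the easy inequality mirror the paper. For the hard inequality $V_{\rm P}^\circ\le\widetilde V_{\rm P}^\circ$, however, your argument genuinely differs from the paper's. The paper does \emph{not} assert that $Y_0$ is constant; it sets $y_0:=\E^{\widebar\P}[Y_0]$, builds the \emph{a priori different} contract $Y_T^{y_0,Z}$, shows the agent's value and optimal responses coincide with those for the original $\xi=Y_T$, and then concludes ``without loss of generality'' that the principal may restrict to this form. As literally written, that last step does not bound $J_{\rm P}(\xi,\P^\star)$ by anything involving $Y_T^{y_0,Z}$, since the two terminal payoffs could differ pathwise if $Y_0\neq y_0$ (and, through the $k_{\rm A}(\cdot,u)\,y$ term in $h_{\rm A}^\circ$, even the induced optimal efforts could differ). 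Your route sidesteps this entirely: weak uniqueness of \eqref{eq:SDE_qvar} makes $\Fc_0^{\widebar\P+}$ $\widebar\P$-trivial, so $Y_0=y_0$ and $\xi=Y_T^{y_0,Z}$ identically. This is more direct and arguably fills a gap in the paper's Step~2.

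Two small remarks. First, your implication ``PMR $\Rightarrow$ Blumenthal'' is imprecise: predictable representation alone does not force the germ $\sigma$-field to be trivial. The clean argument is that for $A\in\Fc_0^{\widebar\P+}$ with $0<\widebar\P(A)<1$, the conditioned laws $\widebar\P(\cdot\mid A)$ and $\widebar\P(\cdot\mid A^c)$ are both solutions to the same martingale problem with deterministic start $x_0$, hence equal $\widebar\P$ by weak uniqueness, giving $\widebar\P(A)\in\{0,1\}$. Second, in the easy direction you invoke \Cref{prop:solve_agent_pb} for $\xi=Y_T^{y_0,Z}$ \emph{before} knowing $\xi\in\Xi^\circ(\Sigma)$, which is mildly circular since the proposition is stated under that hypothesis. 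The paper instead re-runs the It\^o computation of that proposition's Step~2 directly to obtain $V_{\rm A}^\circ(Y_T^{y_0,Z},\Sigma)=y_0$ first; alternatively, note that only the final line of the proposition's proof uses the participation constraint, so the rest applies under the integrability of \Cref{def:contract_FB}\,$(i)$ alone.
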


While it is expected that $V_{\rm P}^\circ \geq \widetilde V^\circ_{\rm P}$, since the principal's problem corresponding to the value $\widetilde V^\circ_{\rm P}$ is by definition restricted to contracts of the form \eqref{eq:contract_FB}, the previous result highlights that we actually have equality. In other words, to solve the principal's problem in the `first-best' reformulation stated in \Cref{pb:PA_FB_reformulation}, one can restrict the study to contracts of the form \eqref{eq:contract_FB}, without loss of generality. Once again, we wish to insist on the fact that the proof of the previous theorem, detailed below, mostly follows the arguments already developed in the proof of \Cref{prop:solve_agent_pb}, and therefore does not rely on the theory of 2BSDEs.

\begin{proof}[\Cref{thm:solution_FB}]
\emph{Step 1.} We first argue that the inequality $V_{\rm P}^\circ \geq \widetilde V^\circ_{\rm P}$ holds. As already mentioned, we clearly have that, for fixed $\Sigma \in \Sc$, any contract defined by $\xi = Y_T^{y_0,Z}$ through \eqref{eq:contract_FB} for some $y_0 \in \R$ and $Z \in \Vc^\circ(\Sigma)$ is an $\Fc_T$-measurable random variable, satisfying in addition the integrability condition \eqref{eq:integrability_contract_agent} by definition of $Z \in \Vc^\circ$. To conclude that $\xi \in \Xi^\circ(\Sigma)$, it suffices to show that this contract $\xi$ satisfies in addition the agent's participation constraint, namely $(iii)$ in the theorem. To prove this, we can follow the same arguments as in Step 2 in the proof of \Cref{prop:solve_agent_pb}. More precisely, through similar computations, we obtain
\begin{align*}
    V^\circ_{\rm A} \big(Y_T^{y_0,Z},\Sigma \big) &= \sup_{\P \in \Pc^\circ(\Sigma)} \E^{\P}\bigg[ y_0 - \int_0^T \Kc^\P_{\rm A}(t) \Big( \Hc^\circ_{\rm A} \big(t,X,Y_t^{y_0,Z},Z_t,\Sigma_t \big)-h^\circ_{\rm A} \big(t, X, Y_t^{y_0,Z}, Z_t, \nu^\P_t \big) \Big) \drm t\bigg]\\
    &= y_0 - \inf_{\P \in \Pc^\circ(\Sigma)} \E^{\P} \bigg[\int_0^T \Kc^\P_{\rm A}(t) \Big(\Hc^\circ_{\rm A}\big(t,X,Y_t^{y_0,Z},Z_t,\Sigma_t \big) - h^\circ_{\rm A} \big(t, X, Y_t^{y_0,Z}, Z_t, \nu^\P_t \big) \Big) \drm t \bigg].
\end{align*}
First, by definition of $\Hc_{\rm A}^\circ$ and $h_{\rm A}^\circ$, and recalling that the discount factor $\Kc^\P_{\rm A}$ is (strictly) positive, we clearly have $V^\circ_{\rm A} \big(Y_T^{y_0,Z},\Sigma \big) \leq y_0$. Then, since $Z \in \Vc^\circ(\Sigma)$, we have, by \Cref{def:contract_FB} $(ii)$ that there exists $\P^\circ \in \Pc^\circ(\Sigma)$ such that \eqref{eq:optimal_effort_FB} holds, namely
\begin{align*}
    \Hc^\circ_{\rm A} \big(t, X, Y_t^{y_0,Z}, Z_t, \Sigma_t \big) = h^\circ_{\rm A} \big(t, X, Y_t^{y_0,Z}, Z_t, \nu_t^{\P^\circ} \big), \quad \drm t \otimes \P^\circ\text{--a.e. on } [0,T] \times \Omega.
\end{align*}
We thus deduce the equality $V^\circ_{\rm A} \big(Y_T^{y_0,Z},\Sigma \big) = y_0$ for such $\P^\circ$, with associated effort $\nu^{\P^\circ}$ defined as a maximiser of the Hamiltonian $\Hc_{\rm A}^\circ$. Finally, since $y_0 \geq R_{\rm A}$, we conclude that $Y_T^{y_0,Z} \in \Xi^\circ(\Sigma)$.

\medskip

\emph{Step 2.} We now prove the converse inequality, namely $V_{\rm P}^\circ \leq \widetilde V^\circ_{\rm P}$. Recall that, by \Cref{prop:solve_agent_pb}, any contract $\xi \in \Xi^\circ(\Sigma)$ can be written as $\xi = Y_T$, where
\begin{align*}
    Y_t = Y_0 - \int_0^t \Hc_{\rm A}^\circ \big(s, X, Y_s, Z_s, \Sigma_s \big) \drm s + \int_0^t Z_s \cdot \drm X_s, \; t \in [0,T], \; \widebar \P\textnormal{--a.s.},
\end{align*}
with $Y_0$ an $\Fc_0^{\widebar \P+}$-measurable random variable and $Z \in \H^p$. Moreover, we have $V^\circ_{\rm A}(\xi,\Sigma) = \E^{\widebar \P}[Y_0] \geq R_{\rm A}$, and $\P^\circ \in \Pc^{\circ,\star}(\xi,\Sigma)$ if and only if $\Hc^\circ_{\rm A} (t, X, Y_t, Z_t, \Sigma_t) = h^\circ_{\rm A} (t, X, Y_t, Z_t, \nu_t^{\P^\circ})$, $\drm t \otimes \P^\circ\text{--a.e. on } [0,T] \times \Omega$. To obtain the desired contract form \eqref{eq:contract_FB}, we need to show that, without loss of generality, the random variable $Y_0$ can be replaced by a constant $y_0 \geq R_{\rm A}$, and verify that we can choose $Z \in \Vc^\circ(\Sigma)$. With this in mind, let $y_0:=\E^{\widebar \P}[Y_0] \geq R_{\rm A}$ and $Z \in \H^p$, to consider the contract defined by $\xi = Y_T^{y_0,Z}$ through \eqref{eq:contract_FB}. By Step 1, we already have that 
\begin{align}\label{eq:value_proof}
    V^\circ_{\rm A} \big(Y_T^{y_0,Z},\Sigma \big) &= y_0 - \inf_{\P \in \Pc^\circ(\Sigma)} \E^{\P} \bigg[\int_0^T \Kc^\P_{\rm A}(t) \Big(\Hc^\circ_{\rm A}\big(t,X,Y_t^{y_0,Z},Z_t,\Sigma_t \big) - h^\circ_{\rm A} \big(t, X, Y_t^{y_0,Z}, Z_t, \nu^\P_t \big) \Big) \drm t \bigg],
\end{align}
and, in particular, $V^\circ_{\rm A} \big(Y_T^{y_0,Z},\Sigma \big) \leq y_0$. Using the same arguments as in Step 2 in the proof of \Cref{prop:solve_agent_pb}, we can also prove the converse inequality, and thus conclude that $V^\circ_{\rm A} \big(Y_T^{y_0,Z},\Sigma \big) = y_0$. Finally, recalling again that $\Pc^{\circ,\star}(\xi,\Sigma) \neq \varnothing$, we deduce that $\P^\circ \in \Pc^{\circ,\star}(\xi,\Sigma)$ if and only if
\begin{align*}
    \E^{\P^\circ} \bigg[\int_0^T \Kc^{\P^\circ}_{\rm A}(t) \Big(\Hc^\circ_{\rm A}\big(t,X,Y_t^{y_0,Z},Z_t,\Sigma_t \big) - h^\circ_{\rm A} \big(t, X, Y_t^{y_0,Z}, Z_t, \nu^{\P^\circ}_t \big) \Big) \drm t \bigg] = 0,
\end{align*}
which directly implies that \eqref{eq:optimal_effort_FB} holds for $\P^\circ$, or equivalently that $Z$ satisfies \eqref{eq:optimal_effort_FB} in \Cref{def:contract_FB}. To summarise, for a contract $Y_T^{y_0,Z} \in \Xi^\circ(\Sigma)$, we have $V^\circ_{\rm A} \big(Y_T^{y_0,Z},\Sigma \big) = y_0$, and this value is achieved for the optimal effort $\nu^{\P^\circ}$, defined as a maximiser of the Hamiltonian $\Hc_{\rm A}^\circ$. We conclude that both contracts $Y_T$ and $Y_T^{y_0,Z}$ generate the same efforts, and induce the same value for the agent. Therefore, without loss of generality, the principal's optimisation problem can be restricted to contracts of the form \eqref{eq:contract_FB}, namely $\xi := Y_T^{y_0,Z}$, where the process $Y^{y_0,Z}$ is defined as the solution to ODE \eqref{eq:contract_FB}, for $y_0 \geq R_{\rm A}$. It remains to verify that $Z \in \Vc^\circ(\Sigma)$, to conclude that $\widetilde V_{\rm P}^\circ = V_{\rm P}^\circ$. Note that $Z \in \H^p$ and satisfies \Cref{def:contract_FB} $(ii)$, as mentioned above. Finally, following similar arguments as in Step 1 in the proof of \Cref{prop:solve_agent_pb}, one can prove that $Y^{y_0,Z} \in \D^p$, so that \Cref{def:contract_FB} $(i)$ is also satisfied. 
\end{proof}

To summarise, \Cref{thm:solution_FB} allows to conclude that, when the quadratic variation is fixed by the principal through its density $\Sigma \in \Sc$, the relevant form of contract is given by $\xi := Y_T^{y_0, Z}$ where the process $Y^{y_0, Z}$ is defined by \eqref{eq:contract_FB}, for some parameters $y_0 \geq R_{\rm A}$ and $Z \in \Vc^\circ(\Sigma)$ optimally chosen by the principal. Moreover, since the agent's optimal response $\P^\circ \in \Pc^{\circ,\star}(Y^{y_0,Z},\Sigma)$ satisfies \eqref{eq:optimal_effort_FB}, his optimal effort at time $t \in [0,T]$ can be represented by $\nu^\circ_t := u^\circ (t, X, Y_t^{y_0,Z}, Z_t, \Sigma_t)$, where the function $u^\circ$ coincides with a maximiser of the Hamiltonian $\Hc_{\rm A}^\circ$, \textit{i.e.} 
\begin{align}\label{eq:u*_FB}
	u^\circ (t,x,y,z,S) \in \argmax_{u \in U_t^\circ(x,S)} h^\circ_{\rm A} (t, x, y, z, u), \quad (t,x,y,z,S) \in [0,T] \times \Omega \times \R \times \R^d \times \S_+^{d}.
\end{align}
Finally, one can notice that, for fixed $y_0 \geq R_{\rm A}$, $\underline V_{\rm P}^\circ (y_0)$ corresponds to the value of a (more) standard stochastic control problem, with two state variables $X$ and $Y^{y_0,Z}$, controlled through $\Sigma \in \Sc$ and $Z \in \Vc^\circ(\Sigma)$. One can finally compute the dynamics of these state variables under $\P^\circ \in \Pc^{\circ,\star}(Y^{y_0,Z},\Sigma)$, and conclude that 
solving \Cref{pb:PA_FB_reformulation} is equivalent to solving \Cref{pb:first-best_solution}, introduced below.

\begin{problem}\label{pb:first-best_solution}
Consider the (more) standard stochastic control problem \eqref{pb_principal_FB_simple}, namely
\begin{align*}
	\widetilde V^\circ_{\rm P} := \sup_{y_0 \geq R_{\rm A}} \underline V_{\rm P}^\circ (y_0), \; \text{ with } \;
	\underline V_{\rm P}^\circ (y_0) := \sup_{\Sigma \in \Sc} \sup_{Z \in \Vc^\circ(\Sigma)} \sup_{\P^\circ \in \Pc^{\circ,\star}(Y_T,\Sigma)} J_{\rm P} \big(\P^\circ, Y_T \big).
\end{align*}
where the couple of state variables $(X,Y)$ is the solution to the following system of {\rm SDEs:}
\begin{subequations}\label{eq:dynamics-FB}
\begin{align}
	\drm X_t &= \sigma \big(t, X, \nu^{\P^\circ}_t \big) \Big( \lambda \big(t, X, \nu^{\P^\circ}_t \big) \mathrm{d}t + \mathrm{d}W_t \Big), \qquad \qquad \qquad \qquad \quad \; \;  t\in[0,T], \\
    \drm Y_t &= \Big( c_{\rm A} \big(t, X, \nu^{\P^\circ}_t \big) + Y_t \, \modifreview{k_{\rm A}} \big(t, X, \nu^{\P^\circ}_t \big) \Big)  \drm t
	+ Z_t \cdot \sigma \big(t, X, \nu^{\P^\circ}_t \big) \mathrm{d}W_t, \quad t\in[0,T],
\end{align}
\end{subequations}
coupled through the agent's optimal effort $\nu^{\P^\circ}_t := u^\circ (t, X, Y_t, Z_t, \Sigma_t)$ under $\P^\circ \in \Pc^{\circ,\star}(Y_T,\Sigma)$.
\end{problem}

One can already notice that \Cref{pb:first-best_solution} stated above is extremely similar to \Cref{pb:PA_solution_CPT}. Indeed, in both problems, the principal optimises the same criteria, and, apart from the agent's optimal effort, the dynamics of the state variables in both problems are identical.
The only yet main difference between the two problems is that, in \Cref{pb:first-best_solution}, this optimal effort depends (among other variables) on $\Sigma$, while, in \Cref{pb:PA_solution_CPT}, it is instead a function of $\Gamma$. We will nevertheless prove, in the next section, that there is a one-to-one correspondence between $\Sigma$ and $\Gamma$ so that optimising one or the other actually lead to the same value for the principal, \textit{i.e.} $\widetilde V_{\rm P} = \widetilde V^\circ_{\rm P}$.

\subsection{Penalisation contracts}\label{ss:penalisation_contract}

As mentioned in introduction, principal--agent problems with moral hazard are equivalent to their first-best counterpart if the principal actually observes the agent's efforts \emph{and} can design a \emph{forcing} contract, which basically forces him to implement the recommended efforts. We wish to follow the same type of reasoning here, to conclude on the equivalence between the original problem and its `first-best' reformulation, introduced by \Cref{pb:PA_FB_reformulation}. Notice that we already proved in \Cref{lem:pb3>pb1} that the principal's value $V_{\rm P}^\circ$ in the alternative `first-best' problem (\Cref{pb:PA_FB_reformulation}) is greater than the value $V_{\rm P}$ in the original problem (\Cref{pb:PA_original}). Therefore, it remains to introduce appropriate contracts to show that, starting from \Cref{pb:PA_original}, one can actually achieve the `first-best' value $V_{\rm P}^\circ$. This will imply the converse inequality, namely $V_{\rm P} \geq V_{\rm P}^\circ$, and thus conclude the proof of the main result of this paper, stated below.

\begin{theorem}\label{thm:main}
	\modif{Under {\rm \Cref{ass:weak_uniqueness,ass:duality}}, solving {\rm \Cref{pb:PA_FB_reformulation}} is equivalent to solving {\rm \Cref{pb:PA_original}}, \textit{i.e.} $V_{\rm P}^\circ = V_{\rm P}$.}
\end{theorem}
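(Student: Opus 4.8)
The plan is to reduce \Cref{thm:main} to the single identity $\widetilde V_{\rm P} = \widetilde V^\circ_{\rm P}$ relating the two \emph{restricted} problems \Cref{pb:PA_solution_CPT} and \Cref{pb:first-best_solution}. Once this is established, the theorem follows by chaining the results already available: \Cref{lem:pb1>pb2} gives $V_{\rm P} \geq \widetilde V_{\rm P}$, \Cref{thm:solution_FB} gives $V_{\rm P}^\circ = \widetilde V_{\rm P}^\circ$, and \Cref{lem:pb3>pb1} gives $V_{\rm P}^\circ \geq V_{\rm P}$, so that
\begin{align*}
    V_{\rm P}^\circ = \widetilde V_{\rm P}^\circ = \widetilde V_{\rm P} \leq V_{\rm P} \leq V_{\rm P}^\circ,
\end{align*}
forcing all quantities to coincide. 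Thus the whole difficulty is concentrated in $\widetilde V_{\rm P} = \widetilde V^\circ_{\rm P}$, which I would prove by fixing $y_0 \geq R_{\rm A}$ and showing $\underline V_{\rm P}(y_0) = \underline V^\circ_{\rm P}(y_0)$. The two restricted problems differ only through the agent's optimal effort, driven by $\Gamma$ in one case (maximising $h_{\rm A}(\cdots,\Gamma_t,\cdot)$ over all of $U$) and by $\Sigma$ in the other (maximising $h^\circ_{\rm A}$ on the level set $U_t^\circ(X,\Sigma_t)$); the bridge is the Legendre--Fenchel relation \eqref{eq:link_hamiltonian} of \Cref{rk:hamiltonian}.

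For the inequality $\underline V_{\rm P}(y_0) \leq \underline V^\circ_{\rm P}(y_0)$, I would start from any admissible $(Z,\Gamma) \in \Vc$ with optimal agent response $\P^\star$, and set $\Sigma_t := [\sigma\sigma^\top](t,X,\nu_t^{\P^\star})$, the quadratic-variation density actually produced. Since $\nu^{\P^\star}$ maximises the full Hamiltonian and lives on the level set $U_t^\circ(X,\Sigma_t)$, on which the penalisation term $\tfrac12{\rm Tr}[\Gamma_t\,\sigma\sigma^\top]$ equals the constant $\tfrac12{\rm Tr}[\Gamma_t\Sigma_t]$, it also maximises $h^\circ_{\rm A}$ over that set; hence $\Sigma_t$ attains the supremum in \eqref{eq:link_hamiltonian} and $Z \in \Vc^\circ(\Sigma)$. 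Along the resulting path $\drm\langle X\rangle_t = \Sigma_t\,\drm t$, so inserting the maximiser identity $\Hc_{\rm A}(\cdots,\Gamma_t)=\Hc^\circ_{\rm A}(\cdots,\Sigma_t)+\tfrac12{\rm Tr}[\Gamma_t\Sigma_t]$ into \eqref{eq:contract_CPT} shows $Y^{y_0,Z,\Gamma}_T = Y^{y_0,Z}_T$, so the same contract, effort and principal payoff are realised in the first-best problem, giving the inequality.

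The converse $\underline V_{\rm P}(y_0) \geq \underline V^\circ_{\rm P}(y_0)$ is the crux, and is precisely where \Cref{ass:duality} enters. Starting from a first-best datum $(\Sigma,Z)$ with $Z \in \Vc^\circ(\Sigma)$ and optimal response $\P^\circ$, I would construct an $\F$-predictable penalisation process $\Gamma$ for which $\Sigma_t$ realises the supremum in \eqref{eq:link_hamiltonian}, that is $\Hc_{\rm A}(t,X,Y_t,Z_t,\Gamma_t)=\Hc^\circ_{\rm A}(t,X,Y_t,Z_t,\Sigma_t)+\tfrac12{\rm Tr}[\Gamma_t\Sigma_t]$. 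Choosing $\Gamma_t$ from the superdifferential (in the variable $S$) of the map $S \mapsto \Hc^\circ_{\rm A}(t,X,Y_t,Z_t,S)$ at $S=\Sigma_t$ does the job, and \Cref{ass:duality} is exactly the concavity/upper-semicontinuity condition guaranteeing that this superdifferential is nonempty and that the biconjugate recovers $\Hc^\circ_{\rm A}$ (without it the correspondence is only one-sided, which is what the counter-example in \Cref{ss:counter-example} exploits). With such a $\Gamma$, the penalisation term in \eqref{eq:contract_CPT} forces the agent's best response to the revealing contract $Y^{y_0,Z,\Gamma}$ to produce exactly the quadratic variation $\Sigma$, the two contracts again coincide, and the same principal payoff is attained. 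The main obstacle is therefore this measurable construction of $\Gamma$: establishing existence of the supergradient via \Cref{ass:duality} (including boundary points of $\S_t(X)$), selecting it in an $\F$-predictable fashion, and verifying the admissibility requirements $(Z,\Gamma)\in\Vc$ of \Cref{def:contrat_vol}, namely $\|Z\|_{\H^p}<\infty$ (inherited from $\Vc^\circ(\Sigma)$), $\|Y^{y_0,Z,\Gamma}\|_{\D^p}<\infty$ (inherited from the coincidence $Y^{y_0,Z,\Gamma}=Y^{y_0,Z}$), and the effort-matching condition of \Cref{def:contrat_vol}$(ii)$.
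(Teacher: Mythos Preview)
Your proposal is correct and follows essentially the same route as the paper: reduce to $\widetilde V_{\rm P} = \widetilde V^\circ_{\rm P}$, then establish both inequalities by constructing $\Sigma$ from $(Z,\Gamma)$ (the paper's \Cref{lem:pb2<=pb4}) and $\Gamma$ from $(\Sigma,Z)$ under \Cref{ass:duality} (the paper's \Cref{lem:pb2>=pb4}), with the chaining via \Cref{lem:pb1>pb2}, \Cref{thm:solution_FB}, and \Cref{lem:pb3>pb1} exactly as you describe. One minor correction: \Cref{ass:duality} is not phrased as a concavity/upper-semicontinuity hypothesis on $S\mapsto\Hc^\circ_{\rm A}(\cdot,S)$ but directly postulates the Fenchel identity \eqref{eq:duality} together with attainment of the infimum by a \emph{measurable} selector $\gamma^\star$, which hands you the required $\Gamma_t := \gamma^\star(t,X,Y_t,Z_t,\Sigma_t)$ without a separate superdifferential or measurable-selection argument.
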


The idea is to start from the original problem \eqref{eq:pb_principal} and restrict the study to contracts of the form \eqref{eq:contract_CPT} introduced in \cite{cvitanic2018dynamic}, hence considering \Cref{pb:PA_solution_CPT}. Then, we only have to prove that this problem is equivalent to \Cref{pb:first-best_solution}. Indeed, we already have by \Cref{lem:pb1>pb2} that $V_{\rm P} \geq \widetilde V_{\rm P}$, and by \Cref{thm:solution_FB} that $V^\circ_{\rm P} = \widetilde V^\circ_{\rm P}$. Therefore, it only remains to prove that $\widetilde V_{\rm P} = \widetilde V^\circ_{\rm P}$ to achieve the desired result, namely $V_{\rm P} \geq \widetilde V_{\rm P} = \widetilde V^\circ_{\rm P} = V^\circ_{\rm P}$. \modif{The previous equality is proved in two steps, first showing that $\widetilde V_{\rm P} \geq \widetilde V^\circ_{\rm P}$ in \Cref{lem:pb2<=pb4}, and then the converse inequality in \Cref{lem:pb2>=pb4}. Indeed, while the first inequality is quite straightforward to prove, the second inequality seems to involve an additional assumption on the structure of the `constrained' Hamiltonian $\Hc_{\rm A}^\circ$, and formulated in \Cref{ass:duality} stated below. While this assumption does not appear explicitly in \cite{cvitanic2018dynamic}, the proof of their main result, namely Theorem 3.6 on the optimality of the contract form \eqref{eq:contract_CPT}, may actually fail if this assumption is not verified. We refer to \Cref{rk:duality} below for further clarification.}

\medskip

\modif{
\begin{lemma}\label{lem:pb2<=pb4}
Let $y_0 \geq R_{\rm A}$, $(Z,\Gamma) \in \Vc$, and consider the corresponding state variables $(X,Y)$ with dynamics given by \eqref{eq:dynamics-CPT} and an associated optimal effort $\nu^\star_t := u^\star (t, X, Y_t, Z_t, \Gamma_t)$, $t\in [0,T]$. Then there exists a process $\Sigma \in \Sc$ defined by
\begin{align*}
	\Sigma_t := \big[ \sigma \sigma^\top \big] \big( t, X, u^\star (t, X, Y_t, Z_t, \Gamma_t) \big), \quad \drm t \otimes \P^\star\text{-a.e. on } [0,T]\times \Omega,
\end{align*}
such that $Z \in \Vc^\circ(\Sigma)$ and $\nu^\circ_t := u^\circ (t, X, Y_t, Z_t, \Sigma_t) = u^\star (t, X, Y_t, Z_t, \Gamma_t)$, $\drm t \otimes \P^\star\text{-a.e. on } [0,T]\times \Omega$. 
\end{lemma}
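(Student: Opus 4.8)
The plan is to take the optimal effort $\nu^\star$ attached to the revealing contract $(y_0,Z,\Gamma)$, read off the volatility it induces, declare this to be $\Sigma$, and then use the Legendre--Fenchel relation \eqref{eq:link_hamiltonian} to verify that the same pair $(y_0,Z)$ is admissible for the constrained problem and reproduces the very same effort. First I would set $\nu^\star_t := u^\star(t,X,Y_t,Z_t,\Gamma_t)$, where $Y = Y^{y_0,Z,\Gamma}$ has dynamics \eqref{eq:contract_CPT}, and define $\Sigma_t := [\sigma\sigma^\top](t,X,\nu^\star_t)$. Since $u^\star$ is a measurable selector of the argmax \eqref{eq:u*_CPT}, $Y$ is progressively measurable, and $\sigma$ is bounded and $\F$-optional, the process $\Sigma$ is $\F$-progressively measurable, bounded, and $\S^d_+$-valued, hence $\Sigma \in \Sc$. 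By construction $[\sigma\sigma^\top](t,X,\nu^\star_t) = \Sigma_t$, so $\nu^\star \in \Uc^\circ(\Sigma)$ and the (unique) measure $\P^\star$ attached to $\nu^\star$ lies in $\Pc^\circ(\Sigma)$.

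The crux is the pointwise Hamiltonian identity. Because $\nu^\star_t$ maximises $h_{\rm A}(t,X,Y_t,Z_t,\Gamma_t,\cdot)$ over all of $U$ and lies in the constraint set $U_t^\circ(X,\Sigma_t)$, it also maximises $h_{\rm A}$ over that subset; by \Cref{rk:hamiltonian} the $\gamma$-part of $h_{\rm A}$ equals the constant $\tfrac12\mathrm{Tr}[\Gamma_t\Sigma_t]$ on $U_t^\circ(X,\Sigma_t)$, so maximising $h_{\rm A}$ there is the same as maximising $h^\circ_{\rm A}$. This yields at once
\[
h^\circ_{\rm A}(t,X,Y_t,Z_t,\nu^\star_t) = \Hc^\circ_{\rm A}(t,X,Y_t,Z_t,\Sigma_t), \quad \drm t\otimes\P^\star\text{-a.e.},
\]
so $\nu^\star_t$ is a maximiser of the constrained Hamiltonian; we may therefore align the selector $u^\circ$ so that $u^\circ(t,X,Y_t,Z_t,\Sigma_t) = \nu^\star_t = u^\star(t,X,Y_t,Z_t,\Gamma_t)$, which simultaneously establishes condition $(ii)$ of \Cref{def:contract_FB} with $\P = \P^\star$. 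Reading \eqref{eq:link_hamiltonian} as a supremum over $S\in\S_t(X)$ attained at $S=\Sigma_t$ (since $\nu^\star$, hence $\Sigma$, realises the outer maximum), the same computation also gives
\[
\Hc_{\rm A}(t,X,Y_t,Z_t,\Gamma_t) = \Hc^\circ_{\rm A}(t,X,Y_t,Z_t,\Sigma_t) + \tfrac12\mathrm{Tr}[\Gamma_t\Sigma_t].
\]

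With this identity in hand I would check that the two state processes coincide under $\P^\star$. Since $\drm\langle X\rangle_t = \Sigma_t\,\drm t$ under $\P^\star$, substituting the displayed identity into the dynamics \eqref{eq:contract_CPT} of $Y = Y^{y_0,Z,\Gamma}$ makes the two $\tfrac12\mathrm{Tr}[\Gamma_t\Sigma_t]$ contributions cancel, so $Y$ solves exactly \eqref{eq:contract_FB}; by uniqueness of that forward SDE, $Y^{y_0,Z} = Y^{y_0,Z,\Gamma}$, $\P^\star$-a.s. It then remains to confirm $Z \in \Vc^\circ(\Sigma)$: condition $(ii)$ is already established, and $\|Z\|^p_{\H^p} < \infty$ is inherited verbatim from $(Z,\Gamma) \in \Vc$, this norm being independent of $\Sigma$. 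The bound $\|Y^{y_0,Z}\|^p_{\D^p} < \infty$ I would obtain directly from the defining SDE \eqref{eq:contract_FB}, using boundedness of $\Sigma$, the Lipschitz property of $\Hc^\circ_{\rm A}$ in $(y,z)$, and the cost integrability \eqref{eq:integrability_cost}, exactly as in Step~1 of the proof of \Cref{prop:solve_agent_pb}.

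The main obstacle is precisely this last integrability check. The two value processes agree only under $\P^\star$ (more generally, only under measures with volatility $\Sigma$), whereas the $\D^p$-norm in \Cref{def:contract_FB}$(i)$ is a supremum over all of $\Pc$; consequently one cannot simply transport $\|Y^{y_0,Z,\Gamma}\|_{\D^p} < \infty$ from $(Z,\Gamma) \in \Vc$, and the estimate for $Y^{y_0,Z}$ must be produced independently from its own forward equation. A secondary point requiring care is the measurable-selection argument guaranteeing that $\Sigma$ is progressively measurable and that the selectors $u^\star$ and $u^\circ$ can indeed be chosen to coincide $\drm t\otimes\P^\star$-a.e.
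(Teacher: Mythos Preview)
Your argument is correct and follows essentially the same route as the paper: define $\Sigma$ from the volatility induced by $u^\star$, observe that a global maximiser of $h_{\rm A}$ lying in $U^\circ_t(X,\Sigma_t)$ is automatically a maximiser over that subset (the paper phrases this as an inequality chain, but the content is identical), conclude $u^\circ = u^\star$, and infer $Z\in\Vc^\circ(\Sigma)$. The paper disposes of the last verification with ``straightforward to verify'', whereas you correctly isolate the only delicate point---that $\|Y^{y_0,Z}\|_{\D^p}$ involves a supremum over all of $\Pc$ and hence cannot be read off from $\|Y^{y_0,Z,\Gamma}\|_{\D^p}$---and propose exactly the remedy the paper uses elsewhere (Step~1 of the proof of \Cref{prop:solve_agent_pb}); your additional observation that $Y^{y_0,Z}=Y^{y_0,Z,\Gamma}$ under $\P^\star$ is what makes condition~$(ii)$ of \Cref{def:contract_FB} go through, a step the paper leaves implicit.
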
}

\modif{
\begin{proof}[\Cref{lem:pb2<=pb4}]
Fix $(t,x,y,z,\gamma) \in [0,T] \times \Omega \times \R \times \R^d \times \M^{d}$. First, by definition of the function $u^\star$ in \eqref{eq:u*_CPT},
\begin{align*}
	\Hc_{\rm A} (t, x, y, z, \gamma) = h_{\rm A} \big(t, x, y, z, \gamma, u^\star (t,x,y,z,\gamma) \big).
\end{align*}
Alternatively, by \Cref{rk:hamiltonian}, we have 
\begin{align*}
	\Hc_{\rm A} (t, x, y, z, \gamma) &= \sup_{S \in \S_t(x) } \bigg\{ \Hc^\circ_{\rm A} (t, x, y, z, S) + \dfrac12 {\rm Tr} \big[ \gamma S \big] \bigg\},
\end{align*}
recalling that $\S_t(x)$ is defined for all $(t,x) \in [0,T] \times \Omega$ by \eqref{eq:set_Sigma}. Define $S^\star := \big[ \sigma \sigma^\top \big] \big( t, x, u^\star (t, x, y, z, \gamma) \big)$. By definition, $S^\star \in \S_t(x)$, and we thus have
\begin{align*}
	\Hc_{\rm A} (t, x, y, z, \gamma) &\geq \Hc^\circ_{\rm A} \big(t, x, y, z, S^\star \big) + \dfrac12 {\rm Tr} \big[ \gamma \big[ \sigma \sigma^\top \big] \big( t, x, u^\star (t, x, y, z, \gamma) \big) \big].
\end{align*}
Moreover, also by definition of $S^\star$, we have $u^\star (t, x, y, z, \gamma) \in U_t^\circ(x,S^\star)$, implying
\begin{align*}
	\Hc^\circ_{\rm A} \big(t, x, y, z, S^\star \big) := \sup_{u \in U_t^\circ(x,S^\star)} \; h^\circ_{\rm A} (t, x, y, z, u)
	\geq h^\circ_{\rm A} \big( t, x, y, z, u^\star(t,x,y,z,\gamma) \big).
\end{align*}
Combining all the previous equations we obtain the following inequalities,
\begin{align*}
	h_{\rm A} \big(t, x, y, z, \gamma, u^\star (t,x,y,z,\gamma) \big) 
	&\geq \Hc^\circ_{\rm A} \big(t, x, y, z, S^\star \big) + \dfrac12 {\rm Tr} \big[ \gamma \big[ \sigma \sigma^\top \big] \big( t, x, u^\star (t, x, y, z, \gamma) \big) \big] \\
	&\geq h^\circ_{\rm A} \big( t, x, y, z, u^\star(t,x,y,z,\gamma) \big) + \dfrac12 {\rm Tr} \big[ \gamma \big[ \sigma \sigma^\top \big] \big( t, x, u^\star (t, x, y, z, \gamma) \big) \big] \\
	&= h_{\rm A} \big( t, x, y, z, \gamma, u^\star(t,x,y,z,\gamma) \big),
\end{align*}
which necessarily implies equality at every lines, and in particular
\begin{align*}
	\Hc^\circ_{\rm A} \big(t, x, y, z, S^\star \big) = h^\circ_{\rm A} \big( t, x, y, z, u^\star(t,x,y,z,\gamma) \big).
\end{align*}
Therefore, the map $u^\star$ is a maximiser of the constrained Hamiltonian $\Hc^\circ_{\rm A}$ for the quadratic variation $S^\star$. 

\medskip

To summarise, there exists a map $S^\star$ such that for all $(t,x,y,z,\gamma) \in [0,T] \times \Omega \times \R \times \R^d \times \M^{d}$, we have $S^\star(t,x,y,z,\gamma) \in \S_t(x)$ and
\begin{align*}
	u^\circ \big( t,x,y,z, S^\star(t,x,y,z,\gamma) \big) = u^\star (t,x,y,z,\gamma).
\end{align*}
In other words, for any $(Z,\Gamma) \in \Vc$ and the corresponding state variables $(X,Y)$ with dynamics given by \eqref{eq:dynamics-CPT}, one can define a process $\Sigma \in \Sc$ by taking
\begin{align*}
	\Sigma_t := S^\star (t,X,Y_t,Z_t,\Gamma_t) = \big[ \sigma \sigma^\top \big] \big( t, X, u^\star (t, X, Y_t, Z_t, \Gamma_t) \big), \quad \drm t \otimes \P^\star\text{-a.e. on } [0,T]\times \Omega,
\end{align*}
It is then straightforward to verify that $Z \in \Vc^\circ(\Sigma)$ and, by the previous reasoning,
\begin{align*}
	\nu^\circ_t := u^\circ (t,X,Y_t,Z_t,\Sigma_t) = u^\star (t,X,Y_t,Z_t,\Gamma_t), \quad \drm t \otimes \P^\star\text{-a.e. on } [0,T]\times \Omega.
\end{align*}
\end{proof}}

\modif{The previous lemma highlights that, for any $y_0 \geq R_{\rm A}$ and $(Z,\Gamma) \in \Vc$, one can find an appropriate process $\Sigma \in \Sc$ such that $Z \in \Vc^\circ(\Sigma)$, and the optimal efforts $\nu^\star$ and $\nu^\circ$ coincide. Given the formulation of \Cref{pb:PA_solution_CPT} and \Cref{pb:first-best_solution}, this is enough to conclude that $\widetilde V_{\rm P} \geq \widetilde V^\circ_{\rm P}$. We now turn to the converse inequality, for which we require the following assumption.}


\modif{
\begin{assumption}\label{ass:duality}
	For all $(t,x,y,z) \in [0,T] \times \Omega \times \R \times \R^d$ and $S \in \S_t(x)$, we have
    \begin{align}\label{eq:duality}
		\Hc^\circ_{\rm A} (t, x, y, z, S) = \inf_{\gamma \in \M^d} \bigg\{ \Hc_{\rm A} (t, x, y, z, \gamma) - \dfrac12 {\rm Tr} \big[ \gamma S \big] \bigg\}.
	\end{align}
    Moreover, the infimum is achieved, namely there exists a measurable function $\gamma^\star : [0,T] \times \Omega \times \R \times \R^d \times \S_d^+ \longrightarrow \M^d$ such that
    \begin{align}\label{eq:inf_achieved}
        \Hc^\circ_{\rm A} (t, x, y, z, S) = \Hc_{\rm A} \big(t, x, y, z, \gamma^\star(t,x,y,z,S) \big) - \dfrac12 {\rm Tr} \big[ \gamma^\star(t,x,y,z,S) S \big].
    \end{align}
\end{assumption}}

\modif{
\begin{remark}\label{rk:duality}
	While the previous assumption is not explicitly stated in {\rm \cite{cvitanic2018dynamic}}, the proof of the main result regarding the optimality of the contract form \eqref{eq:contract_CPT} may fail if this assumption is not satisfied. More precisely, it is claimed in this proof that the function defined in {\rm (4.17)}, namely using our notations
	\begin{align*}
		\widebar f : \gamma \in \M^d \longmapsto \Hc_{\rm A} (t, x, y, z, \gamma) - \Hc^\circ_{\rm A} (t, x, y, z, S) - \dfrac12 {\rm Tr} \big[ \gamma S \big],
		\quad (t,x,y,z,S) \in [0,T] \times \Omega \times \R \times \R^d \times \S_d^+,
	\end{align*}
	is surjective on $(0,\infty)$. However, for this claim to be true, one need at least
	\begin{align*}
		\inf_{\gamma \in \M^d} \widebar f(\gamma) = 0,
	\end{align*}
    and this previous condition can be rewritten as \eqref{eq:duality}, for all $(t,x,y,z,S) \in [0,T] \times \Omega \times \R \times \R^d \times \S_d^+$. In other words, $(- 2 \, \Hc^\circ_{\rm A})$ should be the convex conjugate of $2 \, \Hc_{\rm A}$. However, as already mentioned in {\rm \Cref{rk:hamiltonian}}, $2 \, \Hc_{\rm A}$ is already the convex conjugate of $(- 2 \, \Hc^\circ_{\rm A})$. Therefore, the previous condition is true if and only if $(- 2 \, \Hc^\circ_{\rm A})$ actually coincide with its \emph{biconjugate} (the convex conjugate of the convex conjugate). Note that, for any function $g$, its biconjugate $g^{\star \star}$ satisfies $g^{\star \star} \leq g$, which implies here $(- 2 \, \Hc^\circ_{\rm A})^{\star \star} \leq - 2 \, \Hc^\circ_{\rm A}$, and thus
	\begin{align*}
		\Hc^\circ_{\rm A} (t, x, y, z, S) \leq \inf_{\gamma \in \M^d} \bigg\{ \Hc_{\rm A} (t, x, y, z, \gamma) - \dfrac12 {\rm Tr} \big[ \gamma S \big] \bigg\}.
	\end{align*}
	However, by the Fenchel--Moreau theorem, the inequality $g^{\star \star} \leq g$ becomes an equality \emph{if and only if} $g$ is convex, lower semi-continuous and proper. Since $\Hc^\circ_{\rm A}$, as a function of $S$, does not always satisfy these conditions in the framework under consideration here and in {\rm \cite{cvitanic2018dynamic}}, this naturally leads to the first part of {\rm \Cref{ass:duality}}. The second part of the assumption, namely that the infimum is achieved, is also needed in the proof of {\rm \cite[Theorem 3.6]{cvitanic2018dynamic}}. More precisely, from the fact that $\widebar f$ is surjective on $(0,\infty)$, it is deduced that for any $\dot K \geq 0$, there exists a measurable process $\Gamma$ such that 
    \begin{align*}
        \dot K_t = \Hc_{\rm A} (t, X, Y_t, Z_t, \Gamma_t) - \Hc^\circ_{\rm A} (t, X, Y_t, Z_t, \Sigma_t) - \dfrac12 {\rm Tr} \big[ \Gamma_t \Sigma_t \big].
    \end{align*}
    However, as mentioned, it is possible that $\dot K_t = 0$ for some $t \in [0,T]$, and thus one need the function $\widebar f$ to actually be surjective onto $[0,\infty)$ instead of $(0,+\infty)$ only. This naturally leads to \eqref{eq:inf_achieved} in the second part of {\rm \Cref{ass:duality}}.\footnote{\modif{We have been in contact with the authors of \cite{cvitanic2018dynamic} to verify the relevance of our additional assumption. We are currently discussing potential ways to construct alternative forms of contracts in settings where \Cref{ass:duality} does not hold.}}
\end{remark}}

\modif{Under \Cref{ass:duality}, we can state the following result, which serves as the counterpart to \Cref{lem:pb2<=pb4}. Its proof follows similar arguments, adapted to establish the reverse inequality, and is reported below for completeness.
\begin{lemma}\label{lem:pb2>=pb4}
	Let $y_0 \geq R_{\rm A}$, $\Sigma \in \Sc$, $Z \in \Vc^\circ(\Sigma)$, and consider the corresponding state variables $(X,Y)$ with dynamics given by \eqref{eq:dynamics-FB}, as well as an optimal effort $\nu^\circ_t := u^\circ (t, X, Y_t, Z_t, \Sigma_t)$, $t\in [0,T]$. Under {\rm \Cref{ass:duality}}, there exists a process $\Gamma \in \M^{d}$ defined by
	\begin{align*}
		\Gamma_t \in \argmin_{\gamma \in \M^{d}} \bigg\{ \Hc_{\rm A} (t, X, Y_t, Z_t, \gamma) - \dfrac12 {\rm Tr} \big[ \gamma \Sigma_t \big] \bigg\}, \quad \drm t \otimes \P^\circ\text{-a.e. on } [0,T]\times \Omega,
	\end{align*}
	such that $(Z,\Gamma) \in \Vc$ and $\nu^\star_t := u^\star (t, X, Y_t, Z_t, \Gamma_t) = u^\circ (t, X, Y_t, Z_t, \Sigma_t)$, $\drm t \otimes \P^\circ\text{-a.e. on } [0,T]\times \Omega$.
\end{lemma}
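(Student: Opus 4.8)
The plan is to mirror the proof of \Cref{lem:pb2<=pb4}, running the correspondence in the opposite direction: there the trivial bound $\Hc_{\rm A}(t,x,y,z,\gamma)\geq\Hc^\circ_{\rm A}(t,x,y,z,S)+\frac12{\rm Tr}[\gamma S]$ from \Cref{rk:hamiltonian} (valid with no assumption) sufficed, whereas here I would exploit the reverse identity supplied by \Cref{ass:duality}, together with the fact that the infimum in \eqref{eq:duality} is attained at a measurable selector $\gamma^\star$. Concretely, I set $\Gamma_t:=\gamma^\star(t,X,Y_t,Z_t,\Sigma_t)$, which inherits the required measurability from $\gamma^\star$ and from $(X,Y,Z,\Sigma)$, and which by \eqref{eq:inf_achieved} realises the $\argmin$ in the statement. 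The heart of the argument is a single pointwise computation showing that the given constrained maximiser $u^\circ$ is simultaneously an \emph{unconstrained} maximiser at $\gamma^\star$.

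For that pointwise step, fix $(t,x,y,z,S)$ with $S\in\S_t(x)$, write $\gamma^\star=\gamma^\star(t,x,y,z,S)$, and let $u^\circ=u^\circ(t,x,y,z,S)$ be a maximiser of the constrained Hamiltonian, so that $[\sigma\sigma^\top](t,x,u^\circ)=S$ and $\Hc^\circ_{\rm A}(t,x,y,z,S)=h^\circ_{\rm A}(t,x,y,z,u^\circ)$. Using the definition of $h_{\rm A}$, then $[\sigma\sigma^\top](t,x,u^\circ)=S$, and finally \eqref{eq:inf_achieved}, I would compute
\begin{align*}
h_{\rm A}(t,x,y,z,\gamma^\star,u^\circ)
&= h^\circ_{\rm A}(t,x,y,z,u^\circ)+\dfrac12{\rm Tr}\big[\gamma^\star[\sigma\sigma^\top](t,x,u^\circ)\big] \\
&= \Hc^\circ_{\rm A}(t,x,y,z,S)+\dfrac12{\rm Tr}\big[\gamma^\star S\big]
= \Hc_{\rm A}(t,x,y,z,\gamma^\star).
\end{align*}
Since $\Hc_{\rm A}(t,x,y,z,\gamma^\star)=\sup_{u\in U}h_{\rm A}(t,x,y,z,\gamma^\star,u)$, this shows $u^\circ$ attains the unconstrained supremum, so the selector $u^\star$ may be taken to agree with $u^\circ$ at $\gamma^\star$. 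Transported to the processes, this yields $\nu^\star_t=u^\star(t,X,Y_t,Z_t,\Gamma_t)=u^\circ(t,X,Y_t,Z_t,\Sigma_t)=\nu^\circ_t$, $\drm t\otimes\P^\circ$-a.e., which is the second claim.

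It then remains to verify $(Z,\Gamma)\in\Vc$ in the sense of \Cref{def:contrat_vol}. The crucial observation is that, under $\P^\circ\in\Pc^\circ(\Sigma)$, one has $\drm\langle X\rangle_t=\Sigma_t\drm t$, so the $\Gamma$-term in \eqref{eq:contract_CPT} equals $\frac12\int_0^t{\rm Tr}[\Gamma_s\Sigma_s]\drm s$; by the defining property of $\Gamma_s=\gamma^\star(s,X,Y_s,Z_s,\Sigma_s)$, the driver $\Hc_{\rm A}(s,X,\cdot,Z_s,\Gamma_s)-\frac12{\rm Tr}[\Gamma_s\Sigma_s]$ collapses to $\Hc^\circ_{\rm A}(s,X,\cdot,Z_s,\Sigma_s)$. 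Hence $Y^{y_0,Z}$ solves, under $\P^\circ$, exactly the SDE defining $Y^{y_0,Z,\Gamma}$, and by uniqueness $Y^{y_0,Z,\Gamma}=Y^{y_0,Z}$ $\P^\circ$-a.s. Condition $(ii)$ of \Cref{def:contrat_vol} follows by taking $\P=\P^\circ$: the effort coincidence and this identification give $\Hc_{\rm A}(t,X,Y^{y_0,Z,\Gamma}_t,Z_t,\Gamma_t)=h_{\rm A}(t,X,Y^{y_0,Z,\Gamma}_t,Z_t,\Gamma_t,\nu^{\P^\circ}_t)$, while the bound $\|Z\|_{\H^p}<\infty$ is inherited directly from $Z\in\Vc^\circ(\Sigma)$.

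The main obstacle is the remaining integrability $\|Y^{y_0,Z,\Gamma}\|_{\D^p}<\infty$ in condition $(i)$, since the norm in \eqref{eq:norm} is a supremum over \emph{all} of $\Pc$, whereas the identification $Y^{y_0,Z,\Gamma}=Y^{y_0,Z}$ is available only on $\Pc^\circ(\Sigma)$ and the selector $\gamma^\star$ need not be bounded. I would handle this by reproducing, for $Y^{y_0,Z,\Gamma}$, the estimates of Step~1 in the proof of \Cref{prop:solve_agent_pb}: the coefficients $\sigma,\lambda,k_{\rm A}$ are bounded so that the relevant Hamiltonian is Lipschitz in $(y,z)$, and along the constructed $\Gamma$ the driver reduces to $\Hc^\circ_{\rm A}$, allowing the Girsanov/H\"older argument used there to transfer the $\D^p$ control from $Y^{y_0,Z}$ (which lies in $\D^p$ because $Z\in\Vc^\circ(\Sigma)$). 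This is the one point where genuine care about the class of admissible measures is required; the pointwise duality and the resulting effort coincidence are otherwise immediate consequences of \Cref{ass:duality}. Given the formulation of \Cref{pb:PA_solution_CPT} and \Cref{pb:first-best_solution}, the correspondence just established is exactly what is needed to conclude $\widetilde V^\circ_{\rm P}\leq\widetilde V_{\rm P}$.
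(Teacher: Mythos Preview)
Your proof is correct and follows essentially the same route as the paper: the paper's pointwise argument is the mirror computation (written as a chain of inequalities that collapses to equality at $\gamma^\star$, rather than your direct equality chain), after which it defines $\Gamma_t:=\gamma^\star(t,X,Y_t,Z_t,\Sigma_t)$ and simply asserts that ``it is then straightforward to verify that $(Z,\Gamma)\in\Vc$''. You actually go further than the paper on that last verification---in particular, you correctly identify the $\|Y^{y_0,Z,\Gamma}\|_{\D^p}$ estimate over all of $\Pc$ as the nontrivial point, which the paper leaves entirely implicit.
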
}

\modif{
\begin{proof}[\Cref{lem:pb2>=pb4}]
	Fix $(t,x,y,z) \in [0,T] \times \Omega \times \R \times \R^d$ and $S \in \S_t(x)$. On the one hand, by definition of the function $u^\circ$ in \eqref{eq:u*_FB}, we have
	\begin{align*}
		\Hc^\circ_{\rm A} (t, x, y, z, S) = h_{\rm A}^\circ \big(t, x, y, z, u^\circ (t,x,y,z,S) \big).
	\end{align*}
	On the other hand, by \Cref{ass:duality}, there exists a measurable function $\gamma^\star : [0,T] \times \Omega \times \R \times \R^d \times \S_d^+ \longrightarrow \M^d$ such that \eqref{eq:inf_achieved} holds, namely
    \begin{align*}
        \Hc^\circ_{\rm A} (t, x, y, z, S) = \Hc_{\rm A} \big(t, x, y, z, \gamma^\star(t,x,y,z,S) \big) - \dfrac12 {\rm Tr} \big[ \gamma^\star(t,x,y,z,S) S \big].
    \end{align*}
    Then, by definition of $\Hc_{\rm A}$, we have for any $\gamma \in \M^d$
	\begin{align}\label{eq:optimal}
		\Hc_{\rm A} (t, x, y, z, \gamma) \geq h_{\rm A} \big(t, x, y, z, \gamma, u^\circ (t,x,y,z,S) \big),
	\end{align}
	and since $[\sigma \sigma^\top ](t,x,u^\circ (t,x,y,z,S)) = S$, we deduce
	\begin{align*}
		\Hc_{\rm A} (t, x, y, z, \gamma) - \dfrac12 {\rm Tr} \big[ \gamma S \big]
		&\geq h_{\rm A} \big(t, x, y, z, \gamma, u^\circ (t,x,y,z,S) \big) - \dfrac12 {\rm Tr} \big[ \gamma [\sigma \sigma^\top ](t,x,u^\circ (t,x,y,z,S)) \big] \\
		&= h_{\rm A}^\circ \big(t, x, y, z, u^\circ (t,x,y,z,S) \big). \nonumber
	\end{align*}
	Combining all the previous equalities and inequalities together for $\gamma:= \gamma^\star(t,x,y,z,S)$, we have
    \begin{align*}
        h_{\rm A}^\circ \big(t, x, y, z, u^\circ (t,x,y,z,S) \big) &= \Hc^\circ_{\rm A} (t, x, y, z, S)
		\geq h_{\rm A}^\circ \big(t, x, y, z, u^\circ (t,x,y,z,S) \big),
    \end{align*}
    thus implying equality everywhere, in particular in \eqref{eq:optimal} for $\gamma:= \gamma^\star(t,x,y,z,S)$. 
	Therefore, the map $u^\circ$ is a maximiser of the Hamiltonian $\Hc_{\rm A}$ when choosing $\gamma^\star$. 
	
	\medskip
	
	To summarise, there exists a map $\gamma^\star$ such that for all $(t,x,y,z) \in [0,T] \times \Omega \times \R \times \R^d$ and $S \in \S_t(x)$, we have
	\begin{align*}
		u^\star \big( t,x,y,z, \gamma^\star(t,x,y,z,S) \big) = u^\circ (t,x,y,z,S).
	\end{align*}
	In other words, for any $\Sigma \in \Sc$, $Z \in \Vc^\circ(\Sigma)$ and the corresponding state variables $(X,Y)$ with dynamics given by \eqref{eq:dynamics-FB}, one can define a process $\Gamma$ by taking
	\begin{align*}
		\Gamma_t = \gamma^\star(t,X,Y_t,Z_t,\Sigma_t), \quad \drm t \otimes \P^\star\text{-a.e. on } [0,T]\times \Omega
	\end{align*}
	It is then straightforward to verify that $(Z,\Gamma) \in \Vc$ and, by the previous reasoning,
	\begin{align*}
		\nu^\star_t := u^\star (t,X,Y_t,Z_t,\Gamma_t) = u^\circ (t,X,Y_t,Z_t,\Sigma_t), \quad \drm t \otimes \P^\star\text{-a.e. on } [0,T]\times \Omega.
	\end{align*}
\end{proof}}

\modif{Using \Cref{lem:pb2<=pb4} and \Cref{lem:pb2>=pb4}, we naturally deduce the following proposition.

\begin{proposition}\label{prop:pb2=pb4}
	Under {\rm \Cref{ass:duality}}, solving {\rm \Cref{pb:PA_solution_CPT}} is equivalent to solving {\rm \Cref{pb:first-best_solution}}, \textit{i.e.} $\widetilde V_{\rm P} = \widetilde V^\circ_{\rm P}$.
\end{proposition}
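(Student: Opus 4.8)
The plan is to obtain the two inequalities $\widetilde V_{\rm P}\le \widetilde V^\circ_{\rm P}$ and $\widetilde V_{\rm P}\ge \widetilde V^\circ_{\rm P}$ as immediate consequences of \Cref{lem:pb2<=pb4} and \Cref{lem:pb2>=pb4} respectively, the sole remaining task being to translate the \emph{effort-matching} furnished by each lemma into an \emph{equality} of the principal's criterion. The key observation is that both \Cref{pb:PA_solution_CPT} and \Cref{pb:first-best_solution} optimise the same functional $J_{\rm P}(\xi,\P)=\E^\P[\Kc_{\rm P}(T)U_{\rm P}(\ell(X)-\xi)]$ and involve the same state system, \eqref{eq:dynamics-CPT} versus \eqref{eq:dynamics-FB}, which differ only through the coupling effort $\nu^{\P^\star}$ against $\nu^{\P^\circ}$. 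Hence, once the two efforts agree $\drm t\otimes\P$-a.e., the controlled dynamics of $(X,Y)$ coincide, so do the induced law and the terminal contract $Y_T$, and therefore $J_{\rm P}$ takes the same value along the two matched configurations.

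For $\widetilde V_{\rm P}\le\widetilde V^\circ_{\rm P}$ I would fix an arbitrary feasible triple $(y_0,(Z,\Gamma),\P^\star)$ for \Cref{pb:PA_solution_CPT}, with $y_0\ge R_{\rm A}$, $(Z,\Gamma)\in\Vc$ and $\P^\star\in\Pc^\star(Y_T^{y_0,Z,\Gamma})$, and apply \Cref{lem:pb2<=pb4} to produce $\Sigma\in\Sc$ with $Z\in\Vc^\circ(\Sigma)$ and $\nu^\circ=\nu^\star$ $\drm t\otimes\P^\star$-a.e. Invoking the remark preceding \Cref{prop:solve_agent_pb} — under $\P\in\Pc^\circ(\Sigma)$ the $\Gamma$-indexed term of \eqref{eq:contract_CPT} cancels, so $Y^{y_0,Z,\Gamma}$ and $Y^{y_0,Z}$ solve the same equation — I would identify $Y_T^{y_0,Z,\Gamma}=Y_T^{y_0,Z}$ and $\P^\star\in\Pc^{\circ,\star}(Y_T^{y_0,Z},\Sigma)$ (the latter via \Cref{prop:solve_agent_pb}$(ii)$ together with the one-to-one control--measure correspondence of \Cref{ass:weak_uniqueness}). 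The matched tuple $(y_0,\Sigma,Z,\P^\star)$ is then feasible for \Cref{pb:first-best_solution} with the same $J_{\rm P}$, and taking suprema gives the inequality.

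For the converse $\widetilde V_{\rm P}\ge\widetilde V^\circ_{\rm P}$, now under \Cref{ass:duality}, I would argue symmetrically: starting from a feasible $(y_0,\Sigma,Z,\P^\circ)$ for \Cref{pb:first-best_solution} I apply \Cref{lem:pb2>=pb4} to obtain $\Gamma$ with $(Z,\Gamma)\in\Vc$ and $\nu^\star=\nu^\circ$, identify the two contracts and the common optimal response as above, and conclude that this tuple is feasible for \Cref{pb:PA_solution_CPT} with the same objective. Sandwiching the two inequalities yields $\widetilde V_{\rm P}=\widetilde V^\circ_{\rm P}$.

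The hard part will not lie in the bookkeeping above but was already isolated into \Cref{lem:pb2>=pb4}: the converse direction is exactly where \Cref{ass:duality} is indispensable, since reconstructing an admissible $\Gamma$ that realises a prescribed constrained quadratic variation $\Sigma$ requires the infimum in \eqref{eq:duality} to be \emph{attained}, i.e. $-2\Hc^\circ_{\rm A}$ to coincide with its biconjugate — a property that can fail in the present generality (cf. \Cref{rk:duality}), whereas the forward passage $\Gamma\mapsto\Sigma:=[\sigma\sigma^\top](\cdot,u^\star)$ needs no such attainment. A subtler point to verify within the proof is that the agent's optimal-response measures can genuinely be taken identical, $\P^\star=\P^\circ$, so that the two values of $J_{\rm P}$ agree exactly rather than merely as suprema; this is secured by the ``if and only if'' characterisations of optimal responses in \Cref{prop:solve_agent_pb}$(ii)$ and \Cref{def:contrat_vol}$(ii)$.
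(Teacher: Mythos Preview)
Your proposal is correct and follows essentially the same approach as the paper, which simply states that the proposition is a direct consequence of \Cref{lem:pb2<=pb4} and \Cref{lem:pb2>=pb4}; you have spelled out the book-keeping the paper leaves implicit, namely that once the optimal efforts coincide the state systems \eqref{eq:dynamics-CPT} and \eqref{eq:dynamics-FB} agree, hence so do the contracts $Y_T$, the optimal-response sets, and the value of $J_{\rm P}$. Your assignment of which lemma yields which inequality is consistent with the lemmas' content (the paper's surrounding text appears to swap the two directions, but the logic is of course unaffected).
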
}

To summarise, using sequentially \Cref{lem:pb1>pb2}, \Cref{prop:pb2=pb4}, \Cref{thm:solution_FB} and finally \Cref{lem:pb3>pb1}, we obtain the following relationship for the values of the various problems,
\begin{align*}
    V_{\rm P} \geq \widetilde V_{\rm P} = \widetilde V^\circ_{\rm P} = V^\circ_{\rm P} \geq V_{\rm P},
\end{align*}
which directly implies equality of all the values, and in particular concludes the proof of \Cref{thm:main}. In other words, under \Cref{ass:weak_uniqueness,ass:duality}, the four problems outlined are in fact equivalent. 
\modifreview{We recall here that \Cref{ass:weak_uniqueness} is a standing assumption, required to be able to derive a `simple' form of contracts in the `first-best' problem, as mentioned in \Cref{rk:MRP2}. This assumption is not required to develop the approach in \cite{cvitanic2018dynamic}.} \modif{On the other hand, although \Cref{ass:duality} is not stated explicitly in \cite{cvitanic2018dynamic}, it is required to establish the equivalence between the two problems and ensure optimality of the contracts derived in \cite{cvitanic2018dynamic}, as explained in \Cref{rk:duality}.
While this assumption is satisfied in most of the models studied in the literature, see for example \Cref{ss:example1,ss:example2}, it may fail in some frameworks. In particular, \Cref{ss:counter-example} presents a `counter-example', in which \Cref{ass:duality} is not satisfied.}


\section{Illustrative Examples}\label{sec:examples}

In this section, we first present two illustrative examples, in order to highlight the link between \Cref{pb:PA_solution_CPT} and \Cref{pb:first-best_solution}, in particular the one-to-one correspondence between the contract's sensitivity parameter $\Gamma$ in \Cref{pb:PA_solution_CPT} and the quadratic variation density process $\Sigma$ in \Cref{pb:first-best_solution}. \modif{Additionally, we present a third example, in which \Cref{ass:duality} is not satisfied, leading to a `gap' between the value of the original problem and its first-best counterpart. This `counter-example' raises additional questions on the optimality of the contracts introduced in \cite{cvitanic2018dynamic}.} The introduction and resolution of these \modif{three} examples is rather informal, we refer to the previous sections for a more rigorous formulation and the theoretical results. \modifreview{We highlight that in all three examples, the coefficients are trivially Lipschitz continuous and satisfy the usual linear growth condition, thus \Cref{ass:weak_uniqueness} is always satisfied (see for example \citeayn[Theorem 2.2 and 3.1]{touzi2013optimal}).}

\subsection{A first-best case}\label{ss:example1}

We first consider a very simple example, in which the agent only controls the volatility of a one-dimensional output process. More precisely, we let
\begin{align}\label{eq:X_dynamic_example}
    X_t := x_0 + \int_0^t \nu_s \drm W_s, \; t \in [0,T], \; x_0 \in \R,
\end{align}
where $W$ here is a one-dimensional Brownian Motion, and $\nu$ is the agent's control, assumed to take values in $(0,1]$. Given a contract $\xi$, the agent wishes to maximise the following expected utility,
\begin{align*}
    J_{\rm A} (\xi, \nu) := \E^{\P^\nu} \bigg[ U_{\rm A} \bigg( \xi - \dfrac12 \int_0^T \dfrac{1}{|\nu_t|^2} \drm t \bigg) \bigg], \; \text{ with } \; U_{\rm A} (x) = - \erm^{- \gamma_{\rm A} x}, \; x \in \R, \; \gamma_{\rm A} > 0.
\end{align*}
Similarly, the principal's expected utility to be maximised is defined by
\begin{align*}
    J_{\rm P} (\xi,\nu) := \E^{\P^\nu} \big[ U_{\rm P} \big(X_T - \xi - h \langle X \rangle_T \big) \big], \; \text{ with } \; U_{\rm P} (x) = - \erm^{- \gamma_{\rm P} x}, \; x \in \R, \; \gamma_{\rm P} > 0, \; h > 0.
\end{align*}
We first recall that, since the principal observes $X$ in continuous time, she can also compute the associated quadratic variation process. Here, such process follows the dynamics $\drm \langle X \rangle_t = \nu_t^2 \drm t$, $t \in [0,T]$. In other words, its density process $\Sigma$ satisfies $\Sigma_t = \nu_t^2$, $t \in [0,T]$, and since $\nu$ takes values in $(0,1]$, it is clear that, by observing $\Sigma$ in continuous-time, the principal can deduce the agent's effort $\nu_t = \sqrt{\Sigma_t}$, $t \in [0,T]$. This example therefore illustrates a first-best scenario, since from her observation, the principal can actually deduce the agent's effort.

\medskip

If one follows the approach in \cite{cvitanic2018dynamic}, slightly adapted for CARA utility function, the optimal form of contract is
\begin{align*}
    \xi = Y_T^{y_0,Z,\Gamma} = U_{\rm A}^{-1}(y_0) - \int_0^T \Hc_{\rm A}(\Gamma_t) \drm t + \int_0^T Z_t \drm X_t + \dfrac12 \int_0^T \big(\Gamma_t + \gamma_{\rm A} |Z_t|^2 \big) \drm \langle X \rangle_t,
\end{align*}
for $y_0 \in \R$, $(Z,\Gamma)$ an $\R^2$-valued process, and where the agent's Hamiltonian is defined by
\begin{align*}
    \Hc_{\rm A} (\gamma) := \dfrac12 \sup_{u \in (0,1]} \big\{ \gamma u^2 - u^{-2} \big\}, \; \gamma \in \R.
\end{align*}
The maximiser of the previous Hamiltonian is given by 
\begin{align*}
    u^\star (\gamma) :=
    \begin{cases}
        (-\gamma)^{-1/4} & \text{if } \gamma < -1, \\
        1 & \text{if } \gamma \geq -1,
    \end{cases}
\end{align*}
meaning that the agent's optimal response to the contract $\xi$ is given by $\nu^\star_t = u^\star(\Gamma_t)$, $t \in[0,T]$. Moreover, given such contract, one can easily compute that the agent's value boils down to $y_0$, and therefore the principal should choose $y_0 \geq R_{\rm A}$, if $R_{\rm A}$ represents the agent's reservation utility. Computing the dynamics of $X$ and $Y^{y_0,Z,\Gamma}$ under the optimal effort, one obtains
\begin{align*}
    \drm X_t &= u^\star(\Gamma_t) \drm W_t, \text{ and } \;
    \drm Y_t^{y_0,Z,\Gamma} = \dfrac12 \big( |u^\star(\Gamma_t)|^{-2} + \gamma_{\rm A} |Z_t|^2 |u^\star(\Gamma_t)|^2 \big) \drm t + Z_t u^\star(\Gamma_t) \drm W_t, \quad t \in [0,T].
\end{align*}
To reformulate the principal's problem in a more simple way, one can actually define a new state variable, namely $L_t := X_t - Y_t^{y_0,Z,\Gamma} - h \langle X \rangle_t$, $t \in [0,T]$, so that the principal's problem becomes
\begin{align}\label{ex:principal_CPT}
    &\ \sup_{y_0 \geq R_{\rm A}} \widetilde V_{\rm P} (y_0), \; \text{ where } \; \widetilde V_{\rm P} (y_0) = \sup_{(Z,\Gamma) \in \Vc} \E^{\P^{\nu^\star(\Gamma)}} \big[ U_{\rm P} (L_T) \big], \\ 
    \text{ with } &\ 
    \drm L_t = - \dfrac12 \big( |u^\star(\Gamma_t)|^{-2} + \gamma_{\rm A} |Z_t|^2 |u^\star(\Gamma_t)|^2 + 2h |u^\star(\Gamma_t)|^2 \big) \drm t + u^\star(\Gamma_t)(1-Z_t) \drm W_t, \; t \in [0,T], \nonumber
\end{align}
and with initial condition $L_0 = x_0 - U_{\rm A}^{-1}(y_0)$.
Note that for fixed $y_0 \geq R_{\rm A}$, the previous optimisation problem is a standard stochastic control problem with a one-dimensional state variable $L$ controlled through the parameters $(Z,\Gamma) \in \Vc$. For the sake of completeness, the solution is provided at the end of this section, see \Cref{lem:soution_ex_FB}. 

\medskip

If we follow our approach, we first need to fix the quadratic variation density process $\Sigma \in \Sc$. Note here that, since $\Sigma_t = |\nu_t|^2$, $t \in [0,T]$, and $\nu$ takes values in $(0,1]$, $\Sigma$ is also restricted to take values in $(0,1]$. The (constrained) Hamiltonian $\Hc_{\rm A}^\circ$ is defined by
\begin{align*}
    \Hc_{\rm A}^\circ (S) = \dfrac12 \sup_{u \in (0,1] : u^2=S} \big\{- u^{-2} \big\} = - \frac{1}{2 S}, \; \text{ for } \; u^\circ(S) := \sqrt{S}, \; S \in (0,1],
\end{align*}
and for $\Sigma \in \Sc$, the relevant form of contract is given by
\begin{align*}
    \xi^\circ = Y_T^{y_0,Z} = U_{\rm A}^{-1}(y_0) - \int_0^T \Hc^\circ_{\rm A}(\Sigma_t) \drm t + \int_0^T Z_t \drm X_t + \dfrac12  \gamma_{\rm A} \int_0^T |Z_t|^2 \drm \langle X \rangle_t,
\end{align*}
for $y_0 \in \R$ and $Z \in \Vc^\circ(\Sigma)$. For any $\Sigma \in \Sc$ and $\xi^\circ$ of the previous form, the agent's best response is given by the maximiser of the (constrained) Hamiltonian $\Hc_{\rm A}^\circ$, namely $\nu^\circ_t = u^\circ(\Sigma_t) = \sqrt{\Sigma_t}$, $t \in [0,T]$, which allows to achieve the best value $y_0 \geq R_{\rm A}$. As before, we can compute the dynamics of $X$ and $Y^{y_0,Z}$ under the optimal effort to obtain
\begin{align*}
    \drm X_t &= u^\circ(\Sigma_t) \drm W_t, \text{ and } \;
    \drm Y_t^{y_0,Z} = \dfrac12 \big( |u^\circ(\Sigma_t)|^{-2} +  \gamma_{\rm A} |Z_t|^2 |u^\circ(\Sigma_t)|^{2} \big)\drm t + Z_t u^\circ(\Sigma_t) \drm W_t, \quad t \in [0,T].
\end{align*}
Defining $L^\circ_t := X_t - Y_t^{y_0,Z} - h \langle X \rangle_t$, $t \in [0,T]$, the principal's problem becomes a standard stochastic control problem with a one-dimensional state variable $L^\circ$, controlled through $\Sigma \in \Sc$ and $Z \in \Vc^\circ(\Sigma)$:
\begin{align}\label{ex:principal_FB}
    &\ \sup_{y_0 \geq R_{\rm A}} \widetilde V_{\rm P} (y_0), \; \text{ where } \; \widetilde V_{\rm P} (y_0) = \sup_{\Sigma \in \Sc} \sup_{Z \in \Vc^\circ(\Sigma)} \E^{\P^{\nu^\circ(\Sigma)}} \big[ U_{\rm P} (L^\circ_T) \big], \\ 
    \text{ with } &\ 
    \drm L^\circ_t = - \dfrac12 \big( |u^\circ(\Sigma_t)|^{-2} + \gamma_{\rm A} |Z_t|^2 |u^\circ(\Sigma_t)|^2 + 2h \big| u^\circ(\Sigma_t) \big|^2\big) \drm t + u^\circ(\Sigma_t)(1-Z_t) \drm W_t, \; t \in [0,T], \nonumber
\end{align}
with initial condition $L_0 = x_0 - U_{\rm A}^{-1}(y_0)$.

\medskip

Comparing \eqref{ex:principal_CPT} and \eqref{ex:principal_FB}, we clearly need to have equality between the optimal controls to ensure that both problems lead to the same results. This necessitates $u^\circ(S) = u^\star(\gamma)$, for $(\gamma,S) \in \R \times (0,1]$, and we therefore intuit the following correspondence:
\begin{align*}
    S^\star(\gamma) := 
    \begin{cases}
        |\gamma|^{-1/2} & \text{if } \gamma < -1, \\
        1 & \text{if } \gamma \geq -1.
    \end{cases}
\end{align*}
This is in line with the correspondence given by \Cref{lem:pb2<=pb4} since, for $\gamma \in \R$, we clearly have
\begin{align*}
    \argmax_{S \in (0,1]} \bigg\{ \Hc^\circ_{\rm A} (S) + \dfrac12 \gamma S \bigg\}
    = \argmax_{S \in (0,1]} \big\{ - S^{-1} + \gamma S  \big\}
    = \begin{cases}
        |\gamma|^{-1/2} & \text{if } \gamma < -1, \\
        1 & \text{if } \gamma \geq -1.
    \end{cases}
\end{align*}
On the other hand, for $S \in (0,1]$ and $\gamma \in \R$, we compute
\begin{align*}
    \Hc_{\rm A} (\gamma) - \dfrac12 \gamma S
    = \dfrac12 \sup_{u \in (0,1]} \big\{ \gamma u^2 - u^{-2} \big\} - \dfrac12 \gamma S 
    = \begin{cases}
        - |\gamma|^{1/2} - \dfrac12 \gamma S & \text{if } \gamma < -1, \\[0.5em]
        \dfrac12 \big( \gamma - 1 - \gamma S \big) & \text{if } \gamma \geq -1.
    \end{cases}
\end{align*}
\modif{Since $S \in (0,1]$, we first show that \Cref{ass:duality} is satisfied. Indeed,
\begin{align*}
    \min_{\gamma \in \R}\left\{\Hc_{\rm A}(\gamma)-\dfrac12 \gamma S\right\}=-\frac{1}{2S} = \Hc^\circ_{\rm A}(S), \quad S\in(0,1],
\end{align*}
with the minimum achieved for $\gamma^\star(S) = S^{-2}$. This is inline with \Cref{lem:pb2>=pb4}.}

\begin{remark}\label{lem:soution_ex_FB}
To solve \eqref{ex:principal_CPT} or \eqref{ex:principal_FB}, one can compute the principal's Hamiltonian and write the corresponding {\rm HJB} equation. Recall that both problems are equivalent, but here, \eqref{ex:principal_FB} may be more straightforward to solve compared to \eqref{ex:principal_CPT}, where one would need to distinguish cases depending on the value of $\Gamma$. Through standard techniques, one obtains the following values and efforts for the agent and the principal, respectively, 
\begin{align*}
    V_{\rm A}(\xi) &= R_{\rm A}, \; \text{ for } \; 
    \nu^\star \equiv 1 \wedge \big( 2h + \gamma_{\rm A} \overline{\gamma} \big)^{-1/4}, \; \text{ with } \; \overline{\gamma} := \dfrac{\gamma_{\rm P}}{\gamma_{\rm P} + \gamma_{\rm A}}; \\ \text{ and } \; 
    V_{\rm P} &= - \exp \Big(- \gamma_{\rm P} \Big(x_0 - U_{\rm A}^{-1}(R_{\rm A}) - b(0) \Big) \Big), \; \text{ for } \; 
    Z^\star \equiv \overline{\gamma}, \; 
    \Sigma^\star \equiv 1 \wedge \big( 2h + \gamma_{\rm A} \overline{\gamma} \big)^{-1/2}.
\end{align*}
One can verify that solving \eqref{ex:principal_CPT} lead to the exact same results, but with the optimal sensitivity parameter
\begin{align*}
    \Gamma^\star \equiv - \big| \Sigma^\star \big|^{-2} \equiv - \big( 1 \vee \big(2h + \gamma_{\rm A} \overline{\gamma} \big) \big).
\end{align*}
\end{remark}

\subsection{A more intricate example}\label{ss:example2}

We now consider a model inspired by the demand-response problem introduced and solved by \citeayn{aid2022optimal}. In this model\footnote{We will only briefly describe the model here and refer to \cite{aid2022optimal} for further discussion and motivation.}, the agent can impact, through an admissible control $\nu:=(\alpha,\beta) \in \Uc$, both the mean and the volatility of his electricity consumption for various specific usages, such as heating, air conditioning, etc. His deviation with respect to his original consumption $x_0 \in \R_+$ is denoted $X$, and satisfies:
\begin{align*}
    X_t = x_0 - \int_0^t \alpha_s \cdot \mathbf{1}_d \drm s + \int_0^t \sigma (\beta_s) \drm W_s, \quad t\in[0,T],
\end{align*}
where $W$ is a $d$-dimensional Brownian Motion.
The volatility function $\sigma$ in the previous dynamics is defined by
\begin{align*}
    \sigma(b):=(\sigma_1\sqrt{b_1},\dots,\sigma_d\sqrt{b_d}), \; b := (b_1,\dots,b_d) \in [0,1]^d, \; \text{ for } \; \sigma_k>0, \; k \in \{1, \dots, d\}.
\end{align*}
As usual, it is assumed that the principal observes the output process $X$ in continuous time, and therefore its quadratic variation. However, contrary to the previous example, these observations are not sufficient to deduce the agent's effective effort, notably because of the dimension of the underlying noise compare to the observed processes.

\medskip

Given an admissible contract $\xi$, the agent's expected utility to be maximised is defined as follows,
\begin{align*}
    J_{\rm A} (\xi, \nu) := \E^{\P^\nu} \bigg[ U_{\rm A} \bigg( \xi + \int_0^T \big( \kappa X_t - c(\nu_t) \big) \drm t \bigg) \bigg], \; \text{ with } \; U_{\rm A} (x) = - \erm^{- \gamma_{\rm A} x}, \; x \in \R, \; \gamma_{\rm A} > 0, \; \kappa > 0,
\end{align*}
where the cost of effort is given, for any $u := (a,b) \in A \times B$ with $A := \R_+^d$ and $B := \R_+^d$, by
\begin{align*}
    c(u) := c_1(a) + \dfrac12 c_2(b), \; \text{ with } \; 
    c_1(a) := \dfrac12 \sum_{k=1}^d \frac{a_k^2}{\mu_k} \; \text{ and } \; 
    c_2(b) := \sum_{k=1}^d \dfrac{\sigma_k^2}{\lambda_k b_k},
\end{align*}
for some parameters $\lambda_k>0$ and $\mu_k>0$, $k \in \{1,\dots,d\}$.\footnote{Note that here, for computational convenience and illustration purposes, we slightly simplify the model, in particular the sets $A$ and $B$. Indeed, in the original model, these sets are bounded, which will complicate the agent's problem in our approach.}
The principal, who represents an electricity producer/supplier, wishes to incentivise the consumer to decrease the mean and the volatility of his consumption during the contracting period, in order to maximise his expected utility, defined as follows:
\begin{align*}
    J_{\rm P} (\xi,\nu) := \E^{\P^\nu} \bigg[ U_{\rm P} \bigg(- \xi - \theta \int_0^T X_t \drm t - \dfrac12 h \langle X \rangle_T \bigg) \bigg], \; \text{ with } \; U_{\rm P} (x) = - \erm^{- \gamma_{\rm P} x}, \; x \in \R, \; \gamma_{\rm P} > 0, \; h > 0, \; \theta > 0.
\end{align*}

\medskip

If we follow the approach developed in \cite{cvitanic2018dynamic}, and used in \cite{aid2022optimal}, the optimal form of contract is given by
\begin{align*}
    \xi = Y_T^{y_0,Z,\Gamma} = U_{\rm A}^{-1}(y_0) - \int_0^T \Hc_{\rm A}(X_t, Z_t, \Gamma_t) \drm t + \int_0^T Z_t \drm X_t + \dfrac12 \int_0^T \big(\Gamma_t + \gamma_{\rm A} |Z_t|^2 \big) \drm \langle X \rangle_t,
\end{align*}
for $y_0 \in \R$, a pair of $\R$-valued processes $(Z,\Gamma)$, and where the agent's Hamiltonian is defined for $(x,z,\gamma) \in \R^3$ by
\begin{align*}
    \Hc_{\rm A}(x,z,\gamma) &= \sup_{(a,b)\in A\times B} \bigg\{ -a \cdot \mathbf{1}_d z + \dfrac12 \gamma \sigma(b) \sigma^\top(b) + \kappa x-c_1(a) - \dfrac12 c_2(b) \bigg\}\\
    &= \kappa x - \inf_{a\in A} \big\{a \cdot \mathbf{1}_d z + c_1(a) \big\} - \dfrac12 \inf_{b\in B}\big\{ c_2(b)-\gamma \sigma(b) \sigma^\top(b) \big\}.
\end{align*}
As usual, computing the optimiser of the Hamiltonian will determine the agent's best response functions, \textit{i.e.}
\begin{align*}
    a^\star_k(z) = - \mu_k z \vee 0, \; \text{ and } \;
    b^\star_k(\gamma) = (-\lambda_k \gamma)^{-1/2}, \; \gamma < 0, \quad k=1,\dots,d.
\end{align*}
We can then reformulate the principal's problem as follows,
\begin{align}\label{ex2:principal_CPT}
    &\ \sup_{y_0 \geq R_{\rm A}} \widetilde V_{\rm P} (y_0), \; \text{ where } \; \widetilde V_{\rm P} (y_0) = \sup_{(Z,\Gamma) \in \Vc} \E^{\P^{\nu^\star(Z,\Gamma)}} \bigg[ U_{\rm P} \bigg(- Y^{y_0,Z,\Gamma}_T - \theta \int_0^T X_t \drm t - \dfrac12 h \langle X \rangle_T \bigg) \bigg],
\end{align}
with dynamics of state variables $X$ and $Y^{y_0,Z,\Gamma}$ under the optimal efforts given by
\begin{align*}
    \drm X_t &= - \widebar \mu (Z_t \wedge 0) \drm t + (-\Gamma_t)^{-1/4} \sum_{k=1}^d \dfrac{\sigma_k}{\lambda_k^{1/4}} \drm W_t^k, \\
    \drm Y^{y_0,Z,\Gamma}_t &= \dfrac12 \bigg( -2 \kappa X_t 
    + \widebar \mu (Z_t \wedge 0)^2
    + \widebar \sigma \bigg( \sqrt{-\Gamma_t} + \gamma_{\rm A} \dfrac{|Z_t|^2}{\sqrt{-\Gamma_t}} \bigg) \bigg) \drm t
    + \dfrac{Z_t}{(-\Gamma_t)^{1/4}} \sum_{k=1}^d \dfrac{\sigma_k}{\lambda_k^{1/4}} \drm W_t^k, 
\end{align*}
for $t \in [0,T]$, with initial conditions $X_0 = x_0$ and $Y^{y_0,Z,\Gamma}_0 = U_{\rm A}^{-1}(y_0)$, and where we denoted
\begin{align*}
    \widebar \mu :=\sum_{k=1}^d \mu_k, \; \text { and } \; 
    \widebar \sigma := \sum_{k=1}^d \dfrac{\sigma_k^2}{\sqrt{\lambda_k}}.
\end{align*}

Alternatively, if we proceed with our approach, the constraint Hamiltonian is given for $(x,z,S) \in \R^2 \times \R^\star_+$ by
\begin{align*}
    \Hc^\circ_{\rm A}(x,z,S) 
    &= \kappa x - \inf_{a\in \R_+^d} \big\{a \cdot \mathbf{1}_d z + c_1(a) \big\} - \dfrac12 \inf_{b\in \R_+^d \text{ s.t }\sigma(b)\sigma(b)^\top=S}\big\{ c_2(b) \big\},
\end{align*}
and leads to the following optimiser
\begin{align*}
    a^\circ_k(z) = - \mu_k (z \wedge 0) \; \text{ and }\;
    b^\circ_k(S) = \frac{S}{\widebar \sigma \sqrt{\lambda_k}}.
\end{align*}
We can then compute the dynamics of $X$ and $Y^{y_0,Z}$ under the optimal effort, and rewrite the principal's problem as a more standard control problem, similar to \eqref{ex2:principal_CPT}.
It is nevertheless already obvious that, to have equivalence between the two problems, we need to ensure that the quadratic variation of $X$ in both problems coincides, leading to the following relationship between $S \in \R_+^\star$ and $\gamma \in \R_-$:
\begin{align*}
    \sum_{k = 1}^d \sigma_k^2 b_k^\circ(\gamma)  = \sum_{k = 1}^d \sigma_k^2 b_k^\star(\gamma) 
    \quad \Leftrightarrow \quad 
    S = \dfrac{\widebar \sigma}{\sqrt{-\gamma}}
    \quad \Leftrightarrow \quad 
    \gamma = - \dfrac{\widebar \sigma^2}{S^2}.
\end{align*}
One can verify that this coincides with the correspondence given by \Cref{lem:pb2<=pb4,lem:pb2>=pb4}. Indeed, we have
\begin{align*}
    \modif{S^\star(\gamma):=} \argmax_{S>0} \bigg\{ \Hc_{\rm A}^\circ(x,z,S)+\dfrac12 \gamma S \bigg\} 
    &= \argmax_{S>0} \bigg\{- \frac{\widebar \sigma^2}{S} + \gamma S \bigg\} 
    = \dfrac{\widebar \sigma}{\sqrt{- \gamma}}, \quad \gamma < 0, \\
    \text{ and } \; 
    \modif{\gamma^\star(S):=} \argmin_{\gamma<0} \bigg\{ \Hc_{\rm A}(x,z,\gamma)-\dfrac12 \gamma S \bigg\} 
    &= \argmin_{\gamma<0} \bigg\{  -2 \widebar \sigma (- \gamma)^{1/2} - \gamma S \bigg\} 
    = - \dfrac{\widebar \sigma^2}{S^2}, \quad S > 0.
\end{align*}
\modif{In particular, \Cref{ass:duality} is satisfied since 
\begin{align*}
    \min_{\gamma<0} \bigg\{ \Hc_{\rm A}(x,z,\gamma)-\dfrac12 \gamma S \bigg\} = \kappa x + \dfrac12 \widebar \mu (z \wedge0)^2 - \dfrac{\widebar \sigma^2}{S} = \Hc_{\rm A}^\circ(x,z,S), \quad S > 0,
\end{align*}
and the minimum is achieved at $\gamma^\star(S)$.
}

\modif{
\subsection{A `counter-example'}\label{ss:counter-example}

In this example, we highlight that \Cref{ass:duality} may not always be satisfied, and that, in this case, there may be a difference between:
\begin{itemize}
    \item the value of the `first-best' formulation, namely $V^\circ_{\rm P}$ in \Cref{pb:PA_FB_reformulation}, or equivalently $\widetilde V^\circ_{\rm P}$ in \Cref{pb:first-best_solution},
    \item the value of the original problem when restricting to contracts of the form \eqref{eq:contract_CPT}, \textit{i.e.} $\widetilde V_{\rm P}$ in \Cref{pb:PA_solution_CPT}.
\end{itemize} 
For this, we consider the controlled state equation \eqref{eq:X_dynamic_example} driven by a one-dimensional Brownian Motion $W$ as in the first example in \Cref{ss:example1}, but where now the control process $\nu$ can take values in $U := [-1,1]$. We note that achievable quadratic variations take values in $[0,1]$. We assume further that the agent's cost function is given by $c(u) = 1-u^4$ for $u \in [-1,1]$, thus non-negative and bounded as required.

\medskip

We first highlight that \Cref{ass:duality} is not satisfied in this framework. Indeed, let $S \in [0,1]$, we have
\begin{align}\label{eq:max_constraint}
	\Hc_{\rm A}^\circ(S) := \sup_{u \in U : u^2 = S} \big\{ - c(u) \big\}
	= \max \Big\{ \big(\sqrt{S} \big)^4 - 1, \big(-\sqrt{S} \big)^4 - 1 \Big\}
	= S^2 -1, \quad \text{ for } \; u^\circ(S) = \pm \sqrt{S}.
\end{align}
The `constraint' Hamiltonian $\Hc_{\rm A}^\circ$ is proper, continuous, but convex instead of concave, implying that \Cref{ass:duality} is not satisfied for all $S \in [0,1]$. Indeed, for any $\gamma \in \R$, we have
\begin{align}\label{eq:max_hamiltonian}
	\Hc_{\rm A}(\gamma) &:= \sup_{u \in U} \bigg\{\dfrac12 \gamma u^2 - c(u) \bigg\}
	= \sup_{S \in [0,1]} \bigg\{ \sup_{u \in [-1,1] : u^2 = S} \bigg\{\dfrac12 \gamma u^2 + u^4 \bigg\} \bigg\} -1
	= \sup_{S \in [0,1]} \bigg\{ \dfrac12 \gamma S + S^2 \bigg\} -1 \nonumber \\
	&= \begin{cases}
		\gamma/2 &\mbox{ if } \gamma > -2, \text{ for } S^\star(\gamma) = 1, \text{ i.e. } u^\star(\gamma) \in \{-1,1\}; \\
		-1 &\mbox{ if } \gamma = -2, \text{ for } S^\star(\gamma) \in \{0,1\}, \text{ i.e. } u^\star(\gamma) \in \{-1,0,1\}; \\
		-1 &\mbox{ if } \gamma < -2, \text{ for } S^\star(\gamma) = 0, \text{ i.e. } u^\star(\gamma) =0.
	\end{cases}
\end{align}
Computing the convex conjugate of $\Hc_{\rm A}$, we obtain
\begin{align*}
	\inf_{\gamma \in \R} \bigg\{ \Hc_{\rm A}(\gamma) - \dfrac12 \gamma S \bigg\}
	= S-1, \quad \text{ for } \gamma^\star = -2.
\end{align*}
We thus have 
\begin{align*}
    \inf_{\gamma \in \R} \bigg\{ \Hc_{\rm A}(\gamma) - \dfrac12 \gamma S \bigg\} - \Hc_{\rm A}^\circ(S) = S - S^2,
\end{align*}
which is always non-negative, but (strictly) positive on $(0,1)$. In other words, \Cref{ass:duality} is not verified, unless we restrict the study to $S \in \{0,1\}$. Intuitively, if the principal's optimal choice for the quadratic variation is at either \( S = 0 \) or \( S = 1 \), then no issue should arise, since \Cref{ass:duality} is satisfied at these two points. However, if the optimal value for the quadratic variation lies in the interior of the interval \( (0,1) \), the `first-best' value should exceed the value obtained by restricting to contracts of the form~\eqref{eq:contract_CPT}. Indeed, as highlighted above when computing the maximisers of the Hamiltonian $\Hc_{\rm A}$, these contracts can only incentivise optimal efforts taking values in $\{-1,0,1\}$, thus leading to a quadratic variation taking values in $\{0,1\}$ only. We confirm this intuition below.

\medskip

We consider the following criterion for the principal, in which the cost function $c_{\rm P}$ will be chosen later,
\begin{align}\label{eq:principal}
	J_{\rm P} (\xi,\Sigma) := \E \bigg[ X_T - \xi - \int_0^T c_{\rm P} \big(\Sigma_t \big) \drm t \bigg].
\end{align}
In the `first-best' formulation, we know that the optimal contract form is given by
\begin{align*}
	\xi = y_0 - \int_0^T \Hc_{\rm A}^\circ (\Sigma_t) \drm t + \int_0^T Z_t \drm X_t,
\end{align*}
where the pair of processes $(Z,\Sigma)$ has to be chosen by the principal. The optimal efforts are given by the maximiser of the constrained Hamiltonian $\Hc^\circ_{\rm A}$, here $\nu_t^\circ = \pm \sqrt{\Sigma_t}$, which leads to $\Hc_{\rm A}^\circ(\Sigma_t) = \Sigma_t^2 -1$, $t \in [0,T]$, as shown in \eqref{eq:max_constraint}. Using the dynamics of $X$ and $\xi$ under these optimal efforts, we can compute
\begin{align*}
	J_{\rm P} (\xi,\Sigma) = x_0 - y_0 - T + \E \bigg[ \int_0^T \big( \Sigma_t^2 - c_{\rm P} (\Sigma_t) \big) \drm t \bigg].
\end{align*}
By pointwise optimisation, the optimal $\Sigma$ should be given by
\begin{align*}
	\Sigma_t^\star \in \argmax_{S \in [0,1]} \big\{ S^2 - c_{\rm P} (S) \big\}, \quad t \in [0,T].
\end{align*}
Now, by choosing the cost function $c_{\rm P} (S) = S^3$, we obtain a maximum at $4/27$ achieved for $S^\star = 2/3 \in (0,1)$. In other words, the optimal quadratic variation is neither $0$ or $1$, and one can thus expect a duality gap in this case. 

\medskip

Indeed, one can also compute the principal's value when she can only offer contracts of the form \eqref{eq:contract_CPT}. In this case, we still consider the criterion \eqref{eq:principal}, but with the difference that $\Sigma$ is not directly chosen by the principal. In particular, given a contract of the form \eqref{eq:contract_CPT} indexed through a parameter $\Gamma$, we have $\Sigma_t = |u^\star(\Gamma_t)|^2$, $t \in [0,T]$, where $u^\star$ is defined through \eqref{eq:max_hamiltonian} as the maximiser of the Hamiltonian $\Hc_{\rm A}$. Using this contract form and the associated optimal efforts, we can then compute
\begin{align*}
	J_{\rm P} \big(\xi,| u^\star(\Gamma_\cdot)|^2 \big) = x_0 - y_0 + \E \bigg[ \int_0^T \bigg( \Hc_{\rm A}(\Gamma_t) - \dfrac12 \Gamma_t | u^\star(\Gamma_t)|^2  - c_{\rm P} \big( | u^\star(\Gamma_t)|^2 \big) \bigg)\drm t \bigg],
\end{align*}
where now the process $\Gamma$ has to be chosen optimally. Again by pointwise optimisation, we should have
\begin{align*}
	\Gamma_t^\star \in \argmax_{\gamma \in \R} \bigg\{ \Hc_{\rm A}(\gamma) - \dfrac12 \gamma | u^\star(\gamma) |^2  - c_{\rm P} \big( | u^\star(\gamma) |^2 \big) \bigg\}, \quad t \in [0,T].
\end{align*}
By choosing $c_{\rm P}(S) = S^3$ as before, the maximisation problem becomes
\begin{align*}
	\max_{\gamma \in \R} \bigg\{ \Hc_{\rm A}(\gamma) - \dfrac12 \gamma | u^\star(\gamma) |^2  - | u^\star(\gamma)|^6  \bigg\} = -1, \quad \text{for } \; \gamma^\star = -2.
\end{align*}
Therefore, while the value in the `first-best' case is given by $x_0 - y_0 - 23T/27$, the maximum possible value achievable with the contracts \eqref{eq:contract_CPT} in the original `second-best' problem is lower, equal to $x_0 -y_0 - T$.

\medskip

This illustrative example highlights that if \Cref{ass:duality} does not hold, the contract form \eqref{eq:contract_CPT} may fail to achieve the `first-best' value. As mentioned in \Cref{rk:duality}, without \Cref{ass:duality}, the proof of optimality of the contract form \eqref{eq:contract_CPT} via 2BSDEs may also break down. Therefore, while this example illustrates a difference between the value of the original problem when restricting attention to contracts of the form \eqref{eq:contract_CPT} (namely, \Cref{pb:PA_solution_CPT}) and the value of the `first-best' reformulation (\Cref{pb:PA_FB_reformulation,pb:first-best_solution}), it remains unclear whether this reflects an intrinsic `gap' between \Cref{pb:PA_original} and \Cref{pb:PA_FB_reformulation}, or whether it is possible to design a new class of \emph{penalisation} contracts to bridge this gap. We believe this is a very interesting question, which we leave for further research.}


\section{Conclusion}\label{sec:conclusion}

Principal--agent problems with volatility control are notoriously challenging to study. Specifically, the only existing approach so far, introduced by \citeayn{cvitanic2018dynamic}, relies on the sophisticated theory of \emph{second-order} BSDEs. More precisely, the authors establish the equality between the principal's value $V_{\rm P}$ in the original problem (\Cref{pb:PA_original}) and the value $\widetilde V_{\rm P}$ of a more standard stochastic control problem (\Cref{pb:PA_solution_CPT}). While it is clear from the restriction to contracts of the form \eqref{eq:X-dynamics-CPT} that $V_{\rm P} \geq \widetilde V_{\rm P}$, the converse inequality requires to prove that such restriction is without loss of generality, which necessitates the use of \emph{second-order} BSDEs. Even though this approach indeed allows a thorough analysis of the principal--agent problem with volatility control, and therefore of the applications arising from this framework, the mathematical complexity of this approach hampers the study of natural and relevant extensions, especially to multi-agent frameworks. Moreover, this mathematically challenging approach also prevents a wider dissemination of these models and their relevant applications in many fields, from operation research to economics.

\medskip

In contrast, the novel approach we develop only relies on the theory of BSDEs, thanks to the introduction of a `first-best' counterpart of the original problem, in which the principal directly controls the density of the output's quadratic variation  (\Cref{pb:PA_FB_reformulation}). This alternative problem is purely theoretical: it can be seen as a dual to \Cref{pb:PA_original}, and is not necessarily meaningful in terms of application. Nevertheless, compared to \Cref{pb:PA_original}, \Cref{pb:PA_FB_reformulation} can be solved in a more straightforward way, following the so-called Sannikov's trick. More precisely, one can prove using BSDEs that the restriction to contracts of the form \eqref{eq:contract_FB} is without loss of generality, which implies that the principal's problem in this `first-best' reformulation is actually equivalent to another standard stochastic control problem, namely $\widetilde V_{\rm P}^\circ$ defined in \Cref{pb:first-best_solution}.
It then remains to show that the value $V_{\rm P}^\circ$ of the `first-best' reformulated problem, which naturally satisfies $V_{\rm P}^\circ \geq V_{\rm P}$, can be achieved. For this, it suffices to use the contract form \eqref{eq:contract_CPT} considered in \cite{cvitanic2018dynamic}, and thus show that \Cref{pb:PA_solution_CPT} and \Cref{pb:first-best_solution} are actually equivalent. As a result, instead of choosing the sensitivity parameter $\Gamma$ in the contract form \eqref{eq:contract_CPT}, one can directly optimise the density process $\Sigma$ of the output's quadratic variation. Conversely, given a density process $\Sigma$, one can recover from the correspondence in \Cref{lem:pb2>=pb4} the appropriate pay-for-performance sensitivity $\Gamma$ in the contract form \eqref{eq:contract_CPT}. We finally obtain that the problems introduced throughout this paper are all equivalent \modif{under our standing \Cref{ass:weak_uniqueness,ass:duality}}, which is graphically summarised by \Cref{fig:summary}.

\begin{figure}[ht!]
\begin{center}
\begin{tikzpicture}
\tikzstyle{pb1}=[rectangle,draw,rounded corners=4pt,fill=red!30]
\tikzstyle{pb2}=[rectangle,draw,rounded corners=4pt, fill=orange!30]
\tikzstyle{pb3}=[rectangle,draw,rounded corners=4pt, fill=blue!30]
\tikzstyle{pb4}=[rectangle,draw,rounded corners=4pt, fill=green!30]

\node[pb1] (pb1) at (-3,1.5) {\Cref{pb:PA_original}};
\node[pb2] (pb2) at (-3,-1.5) {\Cref{pb:PA_solution_CPT}};
\node[pb3] (pb3) at (3,1.5) {\Cref{pb:PA_FB_reformulation}};
\node[pb4] (pb4) at (3,-1.5) {\Cref{pb:first-best_solution}};

\draw[->,thick] (pb1) edge[bend left] node[right,fill=white]{\Cref{lem:pb1>pb2}, or \cite{cvitanic2018dynamic}} (pb2);
\draw[<-,thick] (pb1) edge[bend left] node[midway,fill=white]{\Cref{lem:pb3>pb1}} (pb3);
\draw[<->,ultra thick,green] (pb3) -- (pb4) node[right,midway,fill=white]{\Cref{thm:solution_FB}};
\draw[->, dashed] (pb2) edge[bend left] node[left,fill=white]{By 2BSDEs in \cite{cvitanic2018dynamic}} (pb1);
\draw[<->, ultra thick,green] (pb1) -- (pb3) node[midway,fill=white]{\Cref{thm:main}};
\draw[<->,thick] (pb2) -- (pb4) node[midway,fill=white]{\Cref{prop:pb2=pb4}};
\end{tikzpicture}
\caption{Equivalence of the problems}
\label{fig:summary}
\end{center}
\end{figure}
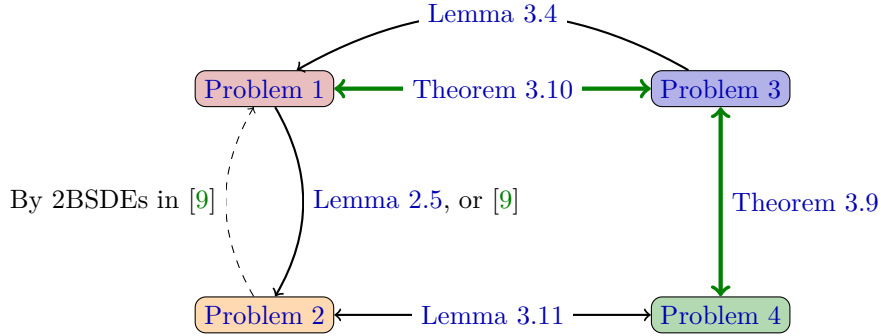

We nevertheless remark that, at the end, and depending on the framework under consideration, \Cref{pb:PA_solution_CPT} may still be easier to solve than \Cref{pb:first-best_solution}. Indeed, as illustrated through \Cref{sec:examples}, the agent's problem \Cref{pb:first-best_solution} involves a constrained optimisation, making it potentially more complex to solve in practice. However, this does not undermine the effectiveness of our approach, since its main goal is to provide a theoretically simpler method to establish the optimality of the relevant contract form. Ultimately, since all problems are equivalent, this approach offers the flexibility to choose the most practical problem to solve. 
We also emphasise that the contract form \eqref{eq:contract_CPT}, originally introduced in \cite{cvitanic2018dynamic}, is essential for proving the equivalence between the original problem and its `first-best' reformulation. Nevertheless, the key message of our approach is that we do not need to rely on 2BSDEs to demonstrate the optimality of these contracts; their optimality naturally comes from the fact that they allow the principal to achieve the best utility possible. 

\medskip

We believe that this alternative, simpler approach will provide a more straightforward pathway for extending principal--agent problems to more complex frameworks, involving for example multi-agent settings, mean-field interactions, or non-continuous output processes. For instance, solving principal--multi-agent problems with volatility control via the original approach in \cite{cvitanic2018dynamic} requires the consideration of `multidimensional 2BSDEs', as highlighted in \cite{hubert2023continuoustime}, for which the theory is actually not yet developed. In contrast, following our proposed approach, one could rely on the more mature theory of multidimensional BSDEs to show the equivalence between the original problem and its `first-best' reformulation, thereby ensuring optimality of appropriate revealing contracts without relying on multidimensional 2BSDEs. Furthermore, this simplified approach shall facilitate practical and more widespread implementation of principal--agent problems with volatility control in diverse fields, beyond mathematical finance. 

\medskip

\modifreview{To conclude, we want to emphasise that while our approach provides a simpler and more accessible framework for solving principal–agent problems with volatility control, it relies on two key assumptions: \Cref{ass:weak_uniqueness}, which guarantees weak uniqueness and thus enables the use of the martingale representation theorem, and \Cref{ass:duality}, under which the contract form proposed in~\cite{cvitanic2018dynamic} remains optimal.
First, we view \Cref{ass:weak_uniqueness} as relatively mild and well motivated, particularly given the intended applications. Indeed, without weak uniqueness, the same choice of effort by the agent could generate different distributions for the controlled output process, making the model less meaningful from an economic standpoint. Moreover, although this assumption is central to our current BSDE-based arguments, it may be possible to relax it by considering \emph{generalised} BSDEs, \textit{i.e.} with an orthogonal martingale component. This approach would still remain simpler than the general 2BSDE framework, and could be extended for example to multi-agent settings, thanks to the existence of multi-dimensional versions of such BSDEs.}
\modif{Second, we have strong reason to believe that our main result, \Cref{thm:main}, will remain valid even without \Cref{ass:duality}. However, since the contract form from~\cite{cvitanic2018dynamic} may no longer be optimal in this case, this would require constructing a new class of contracts. While preliminary exploration in this direction is promising, a full rigorous treatment is left for future research.}

{\small \bibliography{bibliographyEmma.bib} }   

\end{document}